\begin{document}

\allowdisplaybreaks
\hyphenation{non-archi-me-dean}


\newtheorem{theorem}{Theorem}
\newtheorem{lemma}[theorem]{Lemma}
\newtheorem*{unnumberedlemma}{Lemma}
\newtheorem{conjecture}[theorem]{Conjecture}
\newtheorem{proposition}[theorem]{Proposition}
\newtheorem{corollary}[theorem]{Corollary}
\newtheorem*{claim}{Claim}

\theoremstyle{definition}
\newtheorem*{definition}{Definition}
\newtheorem{remark}[theorem]{Remark}
\newtheorem{example}[theorem]{Example}
\newtheorem{question}[theorem]{Question}

\theoremstyle{remark}
\newtheorem*{acknowledgement}{Acknowledgements}


\newenvironment{notation}[0]{%
  \begin{list}%
    {}%
    {\setlength{\itemindent}{0pt}
     \setlength{\labelwidth}{4\parindent}
     \setlength{\labelsep}{\parindent}
     \setlength{\leftmargin}{5\parindent}
     \setlength{\itemsep}{0pt}
     }%
   }%
  {\end{list}}

\newenvironment{parts}[0]{%
  \begin{list}{}%
    {\setlength{\itemindent}{0pt}
     \setlength{\labelwidth}{1.5\parindent}
     \setlength{\labelsep}{.5\parindent}
     \setlength{\leftmargin}{2\parindent}
     \setlength{\itemsep}{0pt}
     }%
   }%
  {\end{list}}
\newcommand{\Part}[1]{\item[\upshape#1]}

\renewcommand{\a}{\alpha}
\newcommand{\aupper}{\overline{\alpha}}
\newcommand{\alower}{\underline{\alpha}}
\renewcommand{\b}{\beta}
\newcommand{\bfbeta}{{\boldsymbol{\beta}}}
\newcommand{\g}{\gamma}
\renewcommand{\d}{\delta}
\newcommand{\e}{\epsilon}
\newcommand{\bfepsilon}{\boldsymbol{\epsilon}}
\newcommand{\f}{\varphi}
\newcommand{\bfphi}{{\boldsymbol{\f}}}
\renewcommand{\l}{\lambda}
\newcommand{\bfl}{{\boldsymbol{\lambda}}}
\renewcommand{\k}{\kappa}
\newcommand{\lhat}{\hat\lambda}
\newcommand{\m}{\mu}
\newcommand{\bfmu}{{\boldsymbol{\mu}}}
\renewcommand{\o}{\omega}
\renewcommand{\r}{\rho}
\newcommand{\rbar}{{\bar\rho}}
\newcommand{\s}{\sigma}
\newcommand{\sbar}{{\bar\sigma}}

\renewcommand{\t}{\tau}
\newcommand{\z}{\zeta}

\newcommand{\D}{\Delta}
\newcommand{\G}{\Gamma}
\newcommand{\F}{\Phi}
\renewcommand{\L}{\Lambda}
\newcommand{\ga}{{\mathfrak{a}}}
\newcommand{\gb}{{\mathfrak{b}}}
\newcommand{\gn}{{\mathfrak{n}}}
\newcommand{\gp}{{\mathfrak{p}}}
\newcommand{\gP}{{\mathfrak{P}}}
\newcommand{\gq}{{\mathfrak{q}}}

\newcommand{\Abar}{{\bar A}}
\newcommand{\Ebar}{{\bar E}}
\newcommand{\Kbar}{{\bar K}}
\newcommand{\Pbar}{{\bar P}}
\newcommand{\Sbar}{{\bar S}}
\newcommand{\Tbar}{{\bar T}}
\newcommand{\ybar}{{\bar y}}
\newcommand{\phibar}{{\bar\f}}

\newcommand{\ftilde}{{\tilde f}}

\newcommand{\Acal}{{\mathcal A}}
\newcommand{\Bcal}{{\mathcal B}}
\newcommand{\Ccal}{{\mathcal C}}
\newcommand{\Dcal}{{\mathcal D}}
\newcommand{\Ecal}{{\mathcal E}}
\newcommand{\Fcal}{{\mathcal F}}
\newcommand{\Gcal}{{\mathcal G}}
\newcommand{\Hcal}{{\mathcal H}}
\newcommand{\Ical}{{\mathcal I}}
\newcommand{\Jcal}{{\mathcal J}}
\newcommand{\Kcal}{{\mathcal K}}
\newcommand{\Lcal}{{\mathcal L}}
\newcommand{\Mcal}{{\mathcal M}}
\newcommand{\Ncal}{{\mathcal N}}
\newcommand{\Ocal}{{\mathcal O}}
\newcommand{\Pcal}{{\mathcal P}}
\newcommand{\Qcal}{{\mathcal Q}}
\newcommand{\Rcal}{{\mathcal R}}
\newcommand{\Scal}{{\mathcal S}}
\newcommand{\Tcal}{{\mathcal T}}
\newcommand{\Ucal}{{\mathcal U}}
\newcommand{\Vcal}{{\mathcal V}}
\newcommand{\Wcal}{{\mathcal W}}
\newcommand{\Xcal}{{\mathcal X}}
\newcommand{\Ycal}{{\mathcal Y}}
\newcommand{\Zcal}{{\mathcal Z}}

\renewcommand{\AA}{\mathbb{A}}
\newcommand{\BB}{\mathbb{B}}
\newcommand{\CC}{\mathbb{C}}
\newcommand{\FF}{\mathbb{F}}
\newcommand{\GG}{\mathbb{G}}
\newcommand{\NN}{\mathbb{N}}
\newcommand{\PP}{\mathbb{P}}
\newcommand{\QQ}{\mathbb{Q}}
\newcommand{\RR}{\mathbb{R}}
\newcommand{\ZZ}{\mathbb{Z}}

\newcommand{\bfa}{{\boldsymbol a}}
\newcommand{\bfb}{{\boldsymbol b}}
\newcommand{\bfc}{{\boldsymbol c}}
\newcommand{\bfe}{{\boldsymbol e}}
\newcommand{\bff}{{\boldsymbol f}}
\newcommand{\bfg}{{\boldsymbol g}}
\newcommand{\bfh}{{\boldsymbol h}}
\newcommand{\bfp}{{\boldsymbol p}}
\newcommand{\bfr}{{\boldsymbol r}}
\newcommand{\bfs}{{\boldsymbol s}}
\newcommand{\bft}{{\boldsymbol t}}
\newcommand{\bfu}{{\boldsymbol u}}
\newcommand{\bfv}{{\boldsymbol v}}
\newcommand{\bfw}{{\boldsymbol w}}
\newcommand{\bfx}{{\boldsymbol x}}
\newcommand{\bfy}{{\boldsymbol y}}
\newcommand{\bfz}{{\boldsymbol z}}
\newcommand{\bfA}{{\boldsymbol A}}
\newcommand{\bfC}{{\boldsymbol C}}
\newcommand{\bfF}{{\boldsymbol F}}
\newcommand{\bfB}{{\boldsymbol B}}
\newcommand{\bfD}{{\boldsymbol D}}
\newcommand{\bfE}{{\boldsymbol E}}
\newcommand{\bfG}{{\boldsymbol G}}
\newcommand{\bfI}{{\boldsymbol I}}
\newcommand{\bfM}{{\boldsymbol M}}
\newcommand{\bfzero}{{\boldsymbol{0}}}

\newcommand{\Amp}{\operatorname{Amp}}
\newcommand{\Aut}{\operatorname{Aut}}
\newcommand{\bad}{\textup{bad}}
\newcommand{\Disc}{\operatorname{Disc}}
\newcommand{\dist}{\Delta}  
\newcommand{\Div}{\operatorname{Div}}
\newcommand{\End}{\operatorname{End}}
\newcommand{\Eff}{\operatorname{Eff}}
\newcommand{\Family}{{\mathcal A}}  
\newcommand{\Fatou}{{\mathcal F}}
\newcommand{\Fbar}{{\bar{F}}}
\newcommand{\Fix}{\operatorname{Fix}}
\newcommand{\Gal}{\operatorname{Gal}}
\newcommand{\GL}{\operatorname{GL}}
\newcommand{\good}{\textup{good}}
\newcommand{\Index}{\operatorname{Index}}
\newcommand{\Image}{\operatorname{Image}}
\newcommand{\interior}{\operatorname{int}}
\newcommand{\Julia}{{\mathcal J}}
\newcommand{\liftable}{{\textup{liftable}}}
\newcommand{\hhat}{{\hat h}}
\newcommand{\hhatlower}{\underline{\hat h}}
\newcommand{\hhatplus}{{\hat h^{\scriptscriptstyle+}}}
\newcommand{\hhatminus}{{\hat h^{\scriptscriptstyle-}}}
\newcommand{\hhatpm}{{\hat h^{\scriptscriptstyle\pm}}}
\newcommand{\hplus}{h^{\scriptscriptstyle+}}
\newcommand{\Ker}{{\operatorname{ker}}}
\newcommand{\Lift}{\operatorname{Lift}}
\newcommand{\MOD}[1]{~(\textup{mod}~#1)}
\newcommand{\Nef}{\operatorname{Nef}}
\newcommand{\Norm}{{\operatorname{\mathsf{N}}}}
\newcommand{\notdivide}{\nmid}
\newcommand{\normalsubgroup}{\triangleleft}
\newcommand{\NS}{{\operatorname{NS}}}
\newcommand{\ns}{{\textup{sm}}} 
\newcommand{\odd}{{\operatorname{odd}}}
\newcommand{\onto}{\twoheadrightarrow}
\newcommand{\ord}{\operatorname{ord}}
\newcommand{\Orbit}{{\mathcal O}}
\newcommand{\PGL}{\operatorname{PGL}}
\newcommand{\Pic}{\operatorname{Pic}}
\newcommand{\Prob}{\operatorname{Prob}}
\newcommand{\psef}{\textup{psef}}
\newcommand{\Qbar}{{\bar{\QQ}}}
\newcommand{\rank}{\operatorname{rank}}
\newcommand{\Res}{{\operatorname{Res}}}
\newcommand{\Resultant}{\operatorname{Res}}
\newcommand{\rest}[2]{\left.{#1}\right\vert_{{#2}}}  
\renewcommand{\setminus}{\smallsetminus}
\newcommand{\Span}{\operatorname{Span}}
\newcommand{\Spec}{{\operatorname{Spec}}}
\newcommand{\Supp}{\operatorname{Supp}}
\newcommand{\tors}{{\textup{tors}}}
\newcommand{\Trace}{\operatorname{Trace}}
\newcommand{\UHP}{{\mathfrak{h}}}    

\newcommand{\longhookrightarrow}{\lhook\joinrel\longrightarrow}
\newcommand{\longonto}{\relbar\joinrel\twoheadrightarrow}


\title[Dynamical and arithmetic degrees of rational maps]
{On the dynamical and arithmetic degrees
of rational self-maps of algebraic varieties}
\date{March 2013}

\author[Shu Kawaguchi and Joseph H. Silverman]
  {Shu Kawaguchi and Joseph H. Silverman}
\email{kawaguch@math.kyoto-u.ac.jp, jhs@math.brown.edu}
\address{Department of Mathematics, Faculty of Science, Kyoto University, 
Kyoto, 606-8502, Japan}
\address{Mathematics Department, Box 1917
         Brown University, Providence, RI 02912 USA}
\subjclass{Primary: 37P15; Secondary: 37P05, 37P30, 37P55, 11G50}
\keywords{arithmetic degree; dynamical degree; canonical height}
\thanks{The first author's research supported by JSPS
grant-in-aid for young scientists (B) 24740015.
The second author's research supported by NSF DMS-0854755
and Simons Collaboration Grant \#241309.}

\begin{abstract}
Let $f:X\dashrightarrow X$ be a dominant rational map of a smooth
projective variety defined over a characteristic~$0$ global field~$K$,
let~$\d_f$ be the dynamical degree of~$f$, and let
$h_X:X(\Kbar)\to[1,\infty)$ be a Weil height relative to an ample
divisor. We prove that for every~$\e>0$ there is a height bound
\[
  h_X\circ f^n \ll (\d_f+\e)^n h_X,
\]
valid for all points whose~$f$-orbit is well-defined,
where the implied constant depends only on~$X$,~$h_X$,~$f$, and~$\e$.
An immediate corollary is a fundamental inequality
$\aupper_f(P)\le\d_f$ for the upper arithmetic degree.  If further~$f$
is a morphism and~$D$ is a divisor satisfying an algebraic equivalence
$f^*D\equiv\b D$ for some $\b>\sqrt{\d_f}$, we prove that the
canonical height $\hhat_{f,D}=\lim \b^{-n}h_D\circ f^n$ converges and
satisfies $\hhat_{f,D}\circ f=\b\hhat_{f,D}$ and
$\hhat_{f,D}=h_D+O\left(\sqrt{h_X}\right)$.  We conjecture that
$\aupper_f(P)=\d_f$ whenever the $f$-orbit of~$P$ is Zariski dense
and describe some cases for which we can prove our conjecture.
\end{abstract}

\maketitle

\newpage 

\section*{Introduction}
\label{section:intro}

Let $X/\CC$ be a smooth projective variety, and let
$f:X\dashrightarrow X$ be a dominant rational map.  The dynamical
degree of~$f$ is a measure of the geometric complexity of the
iterates~$f^n$ of~$f$. More precisely, it measures the complexity of
the induced maps~$(f^n)^*$ of the iterates of~$f$ on the
N\'eron-Severi group~$\NS(X)_\RR$ of~$X$,\footnote{We
  write~$\NS(X)_\RR$ for $\NS(X)\otimes\RR$, and similarly
  for~$\NS(X)_\QQ$ and~$\NS(X)_\CC$.}  where we note that in
general~$(f^n)^*$ need not be equal to~$(f^*)^n$.

\begin{definition}
Let~$X/\CC$ be a (smooth) projective variety and let
$f:X\dashrightarrow X$ be a dominant rational map as above.  The
\emph{dynamical degree of~$f$} is
\[ 
  \d_f = \lim_{n\to\infty} \rho\bigl((f^n)^*,\NS(X)_\RR\bigr)^{1/n},
\]
where in general~$\rho(A,V)$ denotes the spectral radius of a linear
transformation $A:V\to V$ of a real or complex vector space.  The
limit defining~$\d_f$ converges and is a birational invariant, so in
particular there is no need to assume that~$X$ is smooth;
see~\cite[Proposition~1.2(iii)]{MR2179389},
Remark~\ref{remark:dfviaintersection}, and
Corollary~\ref{corollary:dfexists}.
\end{definition}


The study of the dynamical degree and its relation to entropy was
initiated in~\cite{MR1139553,MR1488341} and is currently an area of
active research; see for example \cite{MR1732080, MR2111418,
  MR2280499, arxiv0512507, MR2449533, MR2428100, MR1704282, MR1793690,
  arxiv0608267, arxiv1011.2854, MR2358970, arxiv1007.0253,
  arxiv1010.6285, MR1836434, MR2273670, MR2497321}. In this article we
describe how the geometrically defined dynamical degree of a map
limits the arithmetic complexity of its orbits,
and we prove an inequality relating the dynamical degree to an analogous
arithmetic degree defined in~\cite{arxiv1111.5664}.
\par
Before stating our main results, we set some notation that will be
used throughout this article.  
\begin{notation}
\item[$K$] 
  Either a number field or a one-dimensional function field
  of characteristic~$0$. We let~$\Kbar$ be an algebraic closure of~$K$.
\item[$X,f/K$] 
   Either $X$ is a smooth projective variety and $f:X\dashrightarrow
   X$ is a dominant rational map, all defined over $K$; or $X$ is a
   normal projective variety and $f:X\to X$ is a dominant morphism, all
   defined over $K$.  (See also Remark~\ref{remark:singvar}.)
\item[$h_X$]
   An (absolute logarithmic) Weil height $h_X:X(\Kbar)\to[0,\infty)$
   relative to an ample divisor.
\item[$\hplus_X$]
  For convenience, we set
  $\hplus_X(P) = \max\bigl\{h_X(P),1\bigr\}$.
\item[$\Orbit_f(P)$]
    The (\emph{forward}) \emph{$f$-orbit} of~$P$, i.e., 
         $\Orbit_f(P)=\{f^n(P):n\ge0\}$.
\item[$I_f$]
   The \emph{indeterminacy locus} of~$f$, i.e., the set of
   points at which~$f$ is not well-defined.
\item[$X_f(\Kbar)$]
   The set of points $P\in X(\Kbar)$ whose forward orbit~$\Orbit_f(P)$ is
   well-defined,
   i.e., such that~$f^n(P)\notin I_f$ for all~$n\ge0$.  
   We note that~$X_f(\Kbar)$ always contains many
   points; see~\cite{MR2784670}.
\end{notation}

We refer the reader to~\cite{bombierigubler,
  hindrysilverman:diophantinegeometry, lang:diophantinegeometry,
  MR2514094} for basic definitions and properties of
Weil height functions.

Our main theorem gives a uniform upper bound for the growth of points
in orbits.

\begin{theorem}
\label{theorem:hXfnlldfenhX}
Fix $\e>0$. Then there is a constant $C=C(X,h_X,f,\e)$ so that for
all~$n\ge0$ and all~$P\in X_f(\Kbar)$,
\[
  \hplus_X\bigl(f^n(P)\bigr) \le C (\d_f+\e)^n\hplus_X(P).
\]
\end{theorem}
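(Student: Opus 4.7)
The plan is to reduce the theorem to a one-step height bound for an arbitrary dominant rational self-map and then iterate this bound in blocks of length $m$, where $m$ is chosen large enough that the norm of $(f^m)^*$ has already grown to near $\d_f^m$. The gain over naive iteration is exactly that $\|f^*\|$ can be much larger than $\d_f$, while a suitable intersection-theoretic, submultiplicative norm of $(f^m)^*$ has $m$-th root tending to $\d_f$; this submultiplicativity is essentially the content that makes the defining limit of $\d_f$ converge in the first place.

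The one-step lemma I would prove first: for any dominant rational self-map $g:X\dashrightarrow X$ there exist constants $A(g), B(g)$ depending on $X, h_X, g$ such that
\[
  h_X(g(P)) \le A(g)\,\hplus_X(P) + B(g) \qquad \text{for all } P\in X(\Kbar)\setminus I_g,
\]
and moreover $A(g) \le C_0\|g^*H\|$, where $H$ is the ample divisor underlying $h_X$, $\|\cdot\|$ is a fixed norm on $\NS(X)_\RR$, and $C_0 = C_0(X,h_X)$. The proof resolves the indeterminacy of $g$ via Hironaka (characteristic~$0$), uses functoriality of Weil heights in the form $h_H\circ g = h_{g^*H} + O_g(1)$ on $X\setminus I_g$, and uses ampleness of $H$: for any divisor class $D$, some multiple of $H$ dominates $\pm D$, so $h_D$ is linearly bounded by $\hplus_X$ with constant proportional to $\|D\|$.

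Given the one-step lemma, fix $\e>0$, choose $\e'\in(0,\e)$, and pick $m$ large enough that $A(f^m)\le (\d_f+\e')^m$; this is possible by the definition of $\d_f$ together with the submultiplicativity of $\|(f^m)^*H\|$ in $m$. Writing $A_m = A(f^m)$ and $B_m = B(f^m)$, for any $n\ge 0$ write $n=qm+r$ with $0\le r<m$ and apply the one-step lemma $q$ times to $g=f^m$, starting from the point $f^r(P)\in X_f(\Kbar)$:
\[
  \hplus_X(f^n(P)) \le A_m^q\,\hplus_X(f^r(P)) + B_m\sum_{j=0}^{q-1}A_m^j.
\]
The finitely many maps $f^0,\dots,f^{m-1}$ each satisfy their own one-step bound, giving a uniform $\hplus_X(f^r(P))\le C_m'\hplus_X(P)$ on $X_f(\Kbar)$, and $A_m^q \le (\d_f+\e')^{qm}\le (\d_f+\e)^n$. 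Since $\hplus_X(P)\ge 1$ the additive terms are absorbed multiplicatively, yielding the desired bound with a constant $C = C(X,h_X,f,\e)$ independent of $n$ and $P$.

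The main obstacle is the one-step lemma, and specifically making the multiplicative constant $A(g)$ depend linearly on a fixed norm of $g^*H$ applied once, rather than on a more complicated combination of $g^*$ with itself. If this linearity fails, iterating $q$ times picks up $\|f^*\|^{qm}$ instead of $\|(f^m)^*\|^q$, and the spectral-radius gain disappears. Executing this cleanly for rational (not just regular) self-maps requires tracking height functoriality through a resolution of the indeterminacy locus and making the divisor-class comparisons explicit; once that is in place, the choice of $m$ and the block iteration are routine.
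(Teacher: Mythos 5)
Your proposal is essentially correct and follows the same high-level strategy as the paper: establish a one-step height inequality whose multiplicative constant is controlled by a norm of $g^*H$ (applied once, not iterated), then iterate in blocks of length $m$ with $m$ chosen so that the $m$-th root of that norm is close to $\d_f$. Two points of comparison are worth making.

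First, your one-step lemma is actually simpler than what the paper uses. The paper works with a basis $D_1,\ldots,D_r$ of $\NS(X)_\RR$ by ample divisors and writes $g^*D_k \equiv \sum_i a_{ik}(g)D_i$ as an algebraic equivalence, so the height identity has an error term controlled only by $O\bigl(\sqrt{\hplus_X}\bigr)$ (their Lemma~\ref{lemma:algeqht}, the classical bound on heights of divisors algebraically equivalent to zero); iterating this requires their Lemma~\ref{lemma:gShS0inf} to track the growing $\sqrt{\;}$-error. Your domination argument — choose $m\le C_0\|g^*H\|$ with $m[H]-[g^*H]$ in the ample cone, so $h_{g^*H}\le m\,h_H+O_g(1)$ because an ample representative has a nonnegative height — avoids the algebraic-versus-linear-equivalence discrepancy entirely and yields a bounded additive error, after which the block iteration is a routine geometric series. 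The paper develops the $\sqrt{\;}$-machinery anyway because it is needed for Theorem~\ref{theorem:hDfPbnhDfnP} on canonical heights; for Theorem~\ref{theorem:hXfnlldfenhX} alone, your route is cleaner.

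Second, you locate the ``main obstacle'' in the one-step lemma, but that is the routine part; the real difficulty you are invoking but not proving is the assertion that $\|(f^m)^*H\|^{1/m}\to\d_f$ (or at least $\limsup_m\|(f^m)^*H\|^{1/m}\le\d_f$). For a morphism this is easy, but for a genuinely rational self-map $(f^{m+n})^*\neq (f^m)^*(f^n)^*$ in general, and the needed submultiplicativity $\|(f^{m+n})^*\|\le C\|(f^m)^*\|\cdot\|(f^n)^*\|$ is precisely the content of the paper's Proposition~\ref{proposition:fgDminusgfD} and Theorem~\ref{thm:AfmleCAfm}, which in turn require the comparison $\|v\|\asymp v\cdot H^{N-1}$ on the pseudo-effective cone (Lemmas~\ref{lemma:DEffXHAmp} and~\ref{lemma:absvleCvHN}, ultimately via the Lefschetz hyperplane theorem). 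That chain — effective pullback $(g\circ f)^*D\le f^*(g^*D)$ up to effective difference, positivity of $\cdot\,H^{N-1}$ on $\overline{\Eff}(X)\setminus\{0\}$, and the resulting norm comparison — is the geometric core of the whole proof, and a complete write-up of your proposal would have to supply it. As stated your plan is sound, but it treats as ``known by the definition of $\d_f$'' exactly the statement that Section~3 of the paper is written to prove.
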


For rational maps~$f:\PP^N\dashrightarrow\PP^N$ of projective space,
Theorem~\ref{theorem:hXfnlldfenhX} was essentially proven
in~\cite[Proposition~13]{arxiv1111.5664}. 
The same proof works, \emph{mutatis mutandis}, for varieties
satisfying~$\Pic(X)_\RR=\RR$, and, with a little more work, for
varieties satisfying~$\NS(X)_\RR=\RR$.  But if~$\NS(X)_\RR$ has
dimension greater than~$1$, then the proof of
Theorem~\ref{theorem:hXfnlldfenhX}, which we give in
Section~\ref{section:afpledf} after several sections of preliminary
results, is considerably more intricate.

We next consider the arithmetic degree of a map at a point, as
introduced in~\cite{arxiv1111.5664}. We recall the relevant
definitions, give an elementary counting result, and then describe an
inequality that was a primary motivation for the research that led
to this paper.

\begin{definition} 
Let~$P\in X_f(\Kbar)$. The \emph{arithmetic degree of~$f$ at~$P$} is
the quantity
\[
  \a_f(P) = \lim_{n\to\infty} \hplus_X\bigl(f^n(P)\bigr)^{1/n},
\]
assuming that the limit exists. 
\end{definition} 

The arithmetic degree of~$f$ at~$P$ measures the growth rate
of the height~$h_X\bigl(f^n(P)\bigr)$ as $n\to\infty$. It is thus a
measure of the arithmetic complexity of the~$f$-orbit of~$P$.

\begin{conjecture}
\label{conjecture:afPlimitexists}
The limit defining~$\a_f(P)$ exists for all $P\in X_f(\Kbar)$.
\end{conjecture}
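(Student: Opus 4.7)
\emph{Proof proposal.} Theorem~\ref{theorem:hXfnlldfenhX} (applied with $\e\downarrow 0$) already yields
\[
  \limsup_{n\to\infty} \hplus_X\bigl(f^n(P)\bigr)^{1/n} \le \d_f,
\]
so the content of the conjecture is the existence of the limit itself, together with a matching lower bound on the liminf. The plan is to decompose $h_X\circ f^n$ according to the action of $f^*$ on $\NS(X)_\RR$, establish convergence of the rescaled height on each generalized eigenline, and then recombine.

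After reducing, where possible, to a birational model on which $f$ is algebraically stable (so that $(f^n)^* = (f^*)^n$ on $\NS(X)_\RR$), one applies a Jordan decomposition to $f^*$ and selects a basis $D_1,\dots,D_r$ of $\NS(X)_\RR$ adapted to the generalized eigenspaces, with eigenvalues $\l_1,\dots,\l_r$ ordered so that $|\l_1|=\d_f\ge|\l_2|\ge\cdots$. Expressing $h_X$ as a combination of the $h_{D_i}$ up to bounded error, the conjecture reduces to showing, for each $i$, that
\[
  \lim_{n\to\infty} \l_i^{-n} h_{D_i}\bigl(f^n(P)\bigr)
\]
exists. For the \emph{fast} indices with $|\l_i|>\sqrt{\d_f}$ (and when $f$ is a morphism), the telescoping construction announced in the abstract produces a canonical height $\hhat_{f,D_i}$ satisfying $\hhat_{f,D_i}\circ f = \l_i\hhat_{f,D_i}$ and $\hhat_{f,D_i} = h_{D_i} + O(\sqrt{h_X})$; together with the upper bound supplied by Theorem~\ref{theorem:hXfnlldfenhX}, this provides the required limit along each such direction and, by inspecting the dominant eigenline, even gives $\a_f(P)=|\l_1|=\d_f$ when $\hhat_{f,D_1}(P)\ne 0$.

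The principal obstacle is the \emph{slow} regime $|\l_i|\le\sqrt{\d_f}$: there the natural telescoping series $\sum_n\l_i^{-n}\bigl(h_{D_i}\circ f^{n+1} - \l_i h_{D_i}\circ f^n\bigr)$ is not known to converge absolutely, because the functorial defect of $h_{D_i}$ is controlled only as $O(\sqrt{h_X\circ f^n}) = O(\d_f^{n/2})$, which is not dominated by $|\l_i|^n$. Overcoming this apparently requires sharper, Arakelov- or local-potential-theoretic refinements of the defect estimate that exploit cancellation between places. A secondary difficulty is the reduction to an algebraically stable model: rational maps admitting no such model force one to analyze the non-multiplicative sequence $(f^n)^*$ directly, whose fluctuations could in principle produce subsequential accumulation at values strictly below~$\d_f$; excluding this along orbits appears to demand the Zariski-density hypothesis featured in the authors' refined conjecture $\aupper_f(P)=\d_f$.
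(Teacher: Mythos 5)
The statement you were asked to prove is a \emph{conjecture} that the paper explicitly leaves open: immediately after stating it, the authors write ``Since for the moment we lack a proof of Conjecture~\ref{conjecture:afPlimitexists}, we define upper and lower arithmetic degrees\dots.'' There is therefore no proof in the paper to compare your attempt against, and you should not expect one to exist. Your write-up does not claim to close the conjecture either, and that honesty is appropriate.

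That said, your strategy sketch is essentially the right one and matches what the authors pursue elsewhere. For morphisms, Conjecture~\ref{conjecture:afPeqdfY}(a), which is the same assertion, is proved in the companion paper \cite{kawsilv:jordanblock}, whose very title (``Dynamical canonical heights for Jordan blocks\dots'') confirms your proposal: pass to a Jordan basis of $f^*$ on $\NS(X)_\RR$ and analyze the height growth block by block. You correctly identify that Theorem~\ref{theorem:hDfPbnhDfnP} handles the ``fast'' directions $|\l_i|>\sqrt{\d_f}$ via absolutely convergent telescoping, and that the ``slow'' directions $|\l_i|\le\sqrt{\d_f}$ are the genuine obstruction, because the defect bound $O(\sqrt{h_X\circ f^n})=O(\d_f^{n/2})$ is no longer dominated by $|\l_i|^n$. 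Overcoming this is precisely the new content of \cite{kawsilv:jordanblock}; one does not get it for free from the machinery in this paper.

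Two smaller caveats. First, your reduction via Jordan decomposition implicitly works over $\CC$, but $h_X$ is real-valued; eigenvalues come in conjugate pairs and one has to argue about real Jordan blocks or about $|\l_i^{-n} h_{D_i}(f^n(P))|$ rather than the complex-rescaled sequence, which changes the shape of the statement ``the limit on each eigenline exists.'' Second, for rational maps that are not morphisms, the obstruction you flag (no algebraically stable model, $(f^n)^*\ne(f^*)^n$) is real, and this case remains open. In that regime the paper only provides the one-sided bound $\aupper_f(P)\le\d_f$ of Theorem~\ref{theorem:afPledfintro}, and your proposal correctly does not pretend to produce a matching lower bound for $\alower_f(P)$.
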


One reason for studying the arithmetic degree is that it determines
the height counting function for points in orbits, as in the following
elementary result, which we prove in Section~\ref{section:defs}.

\begin{proposition}
\label{proposition:limBQhXQleB}
Let~$P\in X_f(\Kbar)$ be a wandering point, i.e., a point whose
orbit~$\#\Orbit_f(P)$ is infinite. Assume further that the arithmetic
degree~$\a_f(P)$ exists.  Then
\begin{equation}
  \label{eqn:limBQhXQleB}
  \lim_{B\to\infty} \frac{\#\bigl\{Q\in\Orbit_f(P) : h_X(Q)\le B\bigr\}}{\log B}
   = \frac{1}{\log\a_f(P)}.
\end{equation}
\textup(If~$\a_f(P)=1$, then~\eqref{eqn:limBQhXQleB} is to be
read as saying that the limit is equal to~$\infty$.\textup)
\end{proposition}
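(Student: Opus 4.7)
The plan is to exploit the asymptotic $\hplus_X(f^n(P)) = \a_f(P)^{n(1+o(1))}$ coming from the assumed existence of the limit defining $\a_f(P)$, and to convert this into a count of integers $n$ below an explicit threshold.

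First I would observe that because $P$ is a wandering point, the parametrization $n \mapsto f^n(P)$ is injective, so
\[
  \#\bigl\{Q \in \Orbit_f(P) : h_X(Q) \le B\bigr\} = \#\bigl\{n \ge 0 : h_X(f^n(P)) \le B\bigr\}.
\]
Write $\a = \a_f(P)$, which satisfies $\a \ge 1$ because $\hplus_X \ge 1$. By the definition of $\a$ as a limit, for any $\e > 0$ there is an $N = N(\e)$ with
\[
  (\a - \e)^n \le \hplus_X\bigl(f^n(P)\bigr) \le (\a + \e)^n
  \quad\text{for all } n \ge N.
\]

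Next I would convert each inequality into a count. The right-hand inequality ensures $h_X(f^n(P)) \le B$ whenever $(\a + \e)^n \le B$, i.e., whenever $n \le \log B / \log(\a + \e)$. When $\a > 1$, choosing $\e < \a - 1$, the left-hand inequality forces $h_X(f^n(P)) > B$ as soon as $n > \log B / \log(\a - \e)$. Ignoring the finitely many $n < N$ costs only an additive $O(1)$, so the counting function is sandwiched between $\log B / \log(\a + \e) - O(1)$ and $\log B / \log(\a - \e) + O(1)$. Dividing by $\log B$, letting $B \to \infty$, and then $\e \to 0$ squeezes the normalized count between $1/\log\a$ on both sides and yields the claimed limit.

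Finally, when $\a = 1$ only the upper side of the sandwich survives: for every $\e > 0$ the count is at least $\log B / \log(1 + \e) - O(1)$, hence $\liminf$ of the normalized counting function is at least $1/\log(1 + \e)$ for every $\e > 0$, which forces the limit to equal $+\infty$. The proof has no real obstacle; the only point requiring care is the case split $\a > 1$ versus $\a = 1$, since the lower bound $(\a - \e)^n$ is informative for the count only when $\a - \e > 1$.
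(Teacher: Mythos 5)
Your proof is correct and follows essentially the same route as the paper's: use the limit defining $\a_f(P)$ to sandwich $\hplus_X(f^n(P))$ for large $n$, convert each bound into a count of indices $n$, divide by $\log B$, send $B\to\infty$, and then send $\e\to0$, with a separate reading of the bounds when $\a_f(P)=1$. The only differences are cosmetic (you use additive perturbations $\a\pm\e$ where the paper uses multiplicative $(1\pm\e)\a$) and that you make explicit the injectivity of $n\mapsto f^n(P)$, which the paper leaves implicit.
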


\begin{definition} 
Since for the moment we lack a proof of
Conjecture~\ref{conjecture:afPlimitexists}, we define \emph{upper and
lower arithmetic degrees},
\[
  \aupper_f(P) = \limsup_{n\to\infty} \hplus_X\bigl(f^n(P)\bigr)^{1/n}
  \quad\text{and}\quad
  \alower_f(P) = \liminf_{n\to\infty} \hplus_X\bigl(f^n(P)\bigr)^{1/n}.
\]
\end{definition}

As a corollary to Theorem~\ref{theorem:hXfnlldfenhX}, we obtain the
following fundamental inequality relating the dynamical degree and the
(upper) arithmetic degree. This inequality quantifies the statement
that the arithmetic complexity of the $f$-orbit of an algebraic
point~$P$ never exceeds the geometrical-dynamical complexity of the
map~$f$.

\begin{theorem}
\label{theorem:afPledfintro}
Let $P\in X_f(\Kbar)$. Then
\[
  \aupper_f(P)\le \d_f.
\]
\end{theorem}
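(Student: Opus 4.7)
The plan is to derive Theorem~\ref{theorem:afPledfintro} as a direct corollary of Theorem~\ref{theorem:hXfnlldfenhX}. The strategy is simple: apply the height estimate with an arbitrary $\e > 0$, take $n$-th roots to linearize the inequality in the base $(\d_f + \e)$, pass to a $\limsup$ in $n$, and finally let $\e \to 0^+$.

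More concretely, I would fix $\e > 0$ and invoke Theorem~\ref{theorem:hXfnlldfenhX} to obtain a constant $C = C(X, h_X, f, \e)$ such that
\[
  \hplus_X\bigl(f^n(P)\bigr) \le C (\d_f + \e)^n \hplus_X(P)
\]
for every $n \ge 0$ and every $P \in X_f(\Kbar)$. Taking $n$-th roots yields
\[
  \hplus_X\bigl(f^n(P)\bigr)^{1/n} \le C^{1/n}(\d_f + \e) \hplus_X(P)^{1/n}.
\]
With $P$ and $C$ fixed, both $C^{1/n} \to 1$ and $\hplus_X(P)^{1/n} \to 1$ as $n \to \infty$; for the latter it matters that $\hplus_X(P) \ge 1$ by definition, so that the $n$-th root behaves well. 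Taking $\limsup$ on both sides gives $\aupper_f(P) \le \d_f + \e$, and letting $\e \to 0^+$ produces the desired conclusion $\aupper_f(P) \le \d_f$.

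The hard work in this theorem is not in the deduction above but in Theorem~\ref{theorem:hXfnlldfenhX} itself: providing, uniformly in $P$, a multiplicative height bound whose exponential rate is arbitrarily close to $\d_f$ is the substantive content. Once that input is available, the passage to $\aupper_f$ is essentially formal, and the only minor subtlety worth flagging is the use of $\hplus_X$ rather than $h_X$, which guarantees that the base of the $n$-th root is bounded below by $1$ and thus tends to $1$ even for points at which $h_X$ vanishes or is small.
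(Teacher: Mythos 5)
Your proposal is correct and matches the paper's own proof: the paper deduces this result as Corollary~\ref{corollary:aupperfPledf} by exactly the same steps — invoke Theorem~\ref{theorem:hXfnlldfenhX}, take $n$-th roots, pass to $\limsup$, and let $\e\to 0^+$. Your remark about $\hplus_X \ge 1$ ensuring $\hplus_X(P)^{1/n}\to 1$ is a correct, if minor, detail that the paper leaves implicit.
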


Classically, a polarized dynamical system is a triple~$(X,f,D)$
consisting of a morphism~$f:X\to X$ and a divisor~$D$ satisfying a
\emph{linear equivalence} $f^*D\sim\b D$ for some $\b>1$.  (Often the
definition also includes the condition that~$D$ be ample;
cf.\ \cite{zhang:distalgdyn}.)  There is a well-known theory of
canonical heights associated to polarized dynamical systems; see for
example~\cite{callsilv:htonvariety}.  Using
Theorem~\ref{theorem:hXfnlldfenhX}, we are able to partially
generalize this theory to cover the case that the relation
$f^*D\equiv\b D$ is only an \emph{algebraic equivalence}.

\begin{theorem} 
\label{theorem:hDfPbnhDfnP}
Assume that~$f:X\to X$ is a morphism, and let $D\in\Div(X)_\RR$ 
be a divisor that satisfies an algebraic equivalence
\[
  f^*D\equiv\b D
  \quad\text{for some real number $\b>\sqrt{\d_f}$,}
\]
where $\equiv$ denotes equivalence in~$\NS(X)_\RR$.
\begin{parts}
\Part{(a)}
For all $P\in X(\Kbar)$, the following limit converges\textup:
\[
  \hhat_{D,f}(P)=\lim_{n\to\infty} \b^{-n}h_D\bigl(f^n(P)\bigr).
\]
\Part{(b)}
The canonical height $\hhat_{D,f}$ in \textup{(a)} satisfies
\[
  \hhat_{D,f}\bigl(f(P)\bigr) = \b \hhat_{D,f}(P)
  \quad\text{and}\quad
  \hhat_{D,f}(P) = h_D(P) + 
     \smash{O\left(\sqrt{\hplus_X(P)}\right)}.
\]
\Part{(c)}
If $\hhat_{D,f}(P)\ne0$, then $\alower_f(P)\ge\b$.
\Part{(d)}
If~$\hhat_{D,f}(P)\ne0$ and~$\b=\d_f$, then $\a_f(P)=\d_f$.
\Part{(e)}
Assume that~$D$ is ample and that~$K$ is a number field.  Then
\[
  \hhat_{D,f}(P)=0 \quad\Longleftrightarrow\quad
  \text{$P$ is preperiodic.}
\]
\end{parts}
\end{theorem}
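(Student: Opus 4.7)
\emph{Proof proposal.}
The plan is to handle (a)--(e) in sequence, with (a) being the technical heart and (b)--(e) falling out as corollaries. The key preparatory step, which I would isolate as a lemma, is the height comparison
\[
  h_D\circ f - \b h_D = O\bigl(\sqrt{\hplus_X}\,\bigr).
\]
The hypothesis $f^*D\equiv\b D$ in $\NS(X)_\RR$ means $f^*D-\b D\in\Pic^0(X)_\RR$, and heights attached to algebraically trivial classes are $O(\sqrt{\hplus_X})$ by the classical theory of the Poincar\'e bundle together with the quadratic nature of canonical heights on the Picard variety. Combining this with Theorem~\ref{theorem:hXfnlldfenhX}, applied with $\e>0$ chosen small enough that $\sqrt{\d_f+\e}<\b$ (possible by hypothesis), yields
\[
  \bigl|\b^{-(n+1)} h_D(f^{n+1}(P)) - \b^{-n} h_D(f^n(P))\bigr|
  \ll \left(\frac{\sqrt{\d_f+\e}}{\b}\right)^{\!n}\sqrt{\hplus_X(P)}.
\]
The right-hand side is the general term of a convergent geometric series, so $\b^{-n}h_D(f^n(P))$ is Cauchy, proving~(a). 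Summing these differences from $n=0$ to $\infty$ bounds $|\hhat_{D,f}(P)-h_D(P)|$ by a constant multiple of $\sqrt{\hplus_X(P)}$, which is the comparison in~(b); the functional equation $\hhat_{D,f}\circ f=\b\hhat_{D,f}$ is immediate from reindexing the defining limit.

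For~(c), part~(b) gives
\[
  h_D(f^n(P)) = \b^n \hhat_{D,f}(P) + O\bigl(\sqrt{\hplus_X(f^n(P))}\,\bigr),
\]
and since $D$ is dominated above by a multiple of an ample class, $|h_D(Q)|\ll\hplus_X(Q)$. Thus when $\hhat_{D,f}(P)\ne0$ we obtain $\hplus_X(f^n(P))\gg\b^n|\hhat_{D,f}(P)|$ asymptotically, and taking $n$-th roots yields $\alower_f(P)\ge\b$. Part~(d) follows by the squeeze $\b=\d_f\le\alower_f(P)\le\aupper_f(P)\le\d_f$, combining (c) with Theorem~\ref{theorem:afPledfintro}. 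For~(e), if $P$ is preperiodic then the orbit is finite, so $h_D(f^n(P))$ is bounded and $\hhat_{D,f}(P)=0$ is immediate. Conversely, if $\hhat_{D,f}(P)=0$ then $\hhat_{D,f}(f^n(P))=\b^n\hhat_{D,f}(P)=0$ for every $n$; with $D$ ample we may take $h_X=h_D$, and (b) then gives $\hplus_X(f^n(P))\ll\sqrt{\hplus_X(f^n(P))}$, forcing $\{\hplus_X(f^n(P))\}$ to be bounded. Since $\Orbit_f(P)\subset X(L)$ for a fixed finite extension $L/K$, Northcott's theorem over the number field $L$ makes the orbit finite.

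The main obstacle is the preparatory comparison: passing from a linear equivalence $f^*D\sim\b D$ (which would give error $O(1)$ and reduce to the classical Call--Silverman construction) to the algebraic equivalence $f^*D\equiv\b D$ introduces an error $O(\sqrt{\hplus_X})$ that grows along the orbit. The hypothesis $\b>\sqrt{\d_f}$ is precisely the threshold making the resulting geometric series $\sum(\sqrt{\d_f+\e}/\b)^n$ converge when combined with Theorem~\ref{theorem:hXfnlldfenhX}; without it the telescoping fails. Once this is in hand, parts~(b)--(e) are routine.
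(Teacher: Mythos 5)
Your proof is correct and follows essentially the same route as the paper: the key comparison $h_D\circ f - \b h_D = O\bigl(\sqrt{\hplus_X}\bigr)$ from algebraic triviality (the paper's Lemma~\ref{lemma:algeqht}), combined with Theorem~\ref{theorem:hXfnlldfenhX} for $\e$ chosen so that $\d_f+\e<\b^2$, gives the telescoping Cauchy estimate, and (b)--(e) then fall out exactly as you describe. The only cosmetic difference is in~(c), where the paper passes to the ample divisor $D+H$ and invokes Proposition~\ref{proposition:afPindepofht}, whereas you use the domination $|h_D|\ll\hplus_X$ directly; the two arguments are interchangeable.
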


We note that not every morphism $f:X\to X$ admits a polarization (for
linear equivalence), but that there always exists at least one
non-zero nef divisor~$D\in\Div(X)_\RR$ satisfying $f^*D\equiv\d_fD$;
see Remark~\ref{remark:fDdfDexists}.  Hence every morphism~$f$ of
positive algebraic entropy, i.e., with dynamical degree satisfying
$\d_f>1$, admits a canonical height associated to a nef divisor.

Theorem~\ref{theorem:afPledfintro} raises a natural question: Under
what conditions is $\a_f(P)$ equal to~$\d_f$, i.e., when does the
arithmetic complexity of the $f$-orbit of a point~$P$ fully capture
the geometrical-dynamical complexity of~$f$?  This leads to the
following multi-part conjecture, into which we have incorporated
Conjecture~\ref{conjecture:afPlimitexists}, as well as an integrality
conjecture suggested by a classical conjecture~\cite{MR1704282} on the
integrality of~$\d_f$.  See also~\cite[Conjecture~42]{arxiv1111.5664},
in which~(b),~(c), and~(d) were conjectured for~$\aupper_f(P)$.


\begin{conjecture}
\label{conjecture:afPeqdfY}
Let~$P\in X_f(\Kbar)$.
\begin{parts}
\Part{(a)}
The limit defining~$\a_f(P)$ exists.
\Part{(b)}
$\a_f(P)$ is an algebraic integer.
\Part{(c)}
The collection of arithmetic degrees $\bigl\{\a_f(Q):Q\in
X_f(\Kbar)\bigr\}$ is a finite set.
\Part{(d)}
If the forward orbit~$\Orbit_f(P)$ is Zariski dense in~$X$, then
$\a_f(P)=\d_f$.
\end{parts}
\end{conjecture}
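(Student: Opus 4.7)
The plan is to attack part~(d) directly via the canonical height machinery of Theorem~\ref{theorem:hDfPbnhDfnP}, and to derive the other parts by passing to the orbit closure. For the reduction, given~$P\in X_f(\Kbar)$, set $Y=\overline{\Orbit_f(P)}$; this subvariety is $f$-stable on the open set where iterates of~$f$ are defined, and the orbit of~$P$ is by construction Zariski dense in~$Y$. The restriction of~$h_X$ to~$Y$ is a Weil height relative to an ample divisor, so $\a_f(P)=\a_{f|_Y}(P)$. If~(d) is known for~$(Y,f|_Y,P)$, then $\a_f(P)=\d_{f|_Y}$, which yields~(a), gives~(b) via the integrality of spectral radii of integer matrices acting on~$\NS(Y)_\RR$, and suggests~(c) by a Northcott-type argument bounding the possible spectral radii in terms of data depending only on~$X$ and~$f$ (noting that $\dim\NS(Y)_\RR$ and the relevant intersection numbers are uniformly controlled).

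For (d) itself, I would first dispose of the degenerate case $\d_f=1$: Theorem~\ref{theorem:hXfnlldfenhX} forces $\aupper_f(P)\le 1$, while $\hplus_X\ge 1$ forces $\alower_f(P)\ge 1$, so $\a_f(P)=1=\d_f$ with no further hypothesis. Assume now $\d_f>1$, so that $\d_f>\sqrt{\d_f}$. By Remark~\ref{remark:fDdfDexists}, there exists a nonzero nef divisor $D\in\Div(X)_\RR$ with $f^*D\equiv\d_f D$. Applying Theorem~\ref{theorem:hDfPbnhDfnP}(a),(d) with $\b=\d_f$ produces a canonical height~$\hhat_{D,f}$ with the property that $\hhat_{D,f}(P)\ne 0$ implies $\a_f(P)=\d_f$. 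Part~(d) therefore reduces to the assertion that a Zariski-dense $f$-orbit necessarily has nonzero canonical height with respect to the nef eigendivisor~$D$.

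The hard part will be exactly this nonvanishing. For ample~$D$ over a number field, Theorem~\ref{theorem:hDfPbnhDfnP}(e) supplies it at once, since a Zariski-dense orbit cannot be preperiodic. For merely nef eigendivisors the question is substantially harder, because the null locus of~$D$ may well contain Zariski-dense non-preperiodic orbits. A natural attempt is to perturb, writing $D+\e H$ with~$H$ ample, but the perturbed divisors are no longer eigendivisors and the associated limits no longer converge at the rate~$\d_f^{-n}$. A more promising route would be to prove that the locus
\[
  Z_D=\bigl\{Q\in X(\Kbar):\hhat_{D,f}(Q)=0\bigr\}
\]
is contained in a proper Zariski-closed, $f$-invariant subset of~$X$—morally, the augmented base locus of~$D$ together with its $f$-saturation—so that Zariski density of~$\Orbit_f(P)$ would force~$P\notin Z_D$. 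Establishing such a structural statement appears to require substantial new input, likely a mix of positivity results from birational geometry and arithmetic equidistribution along nef classes, and this is the gap that presently keeps the statement a conjecture rather than a theorem. I would expect the authors to prove it unconditionally only in favorable situations: when $\NS(X)_\RR$ is one-dimensional (so the eigendivisor is automatically ample up to scalar), when $X$ is an abelian variety (where canonical heights are quadratic and detect preperiodicity via torsion), or when $f$ admits a genuine polarization in the classical linear-equivalence sense.
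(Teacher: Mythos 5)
This statement is labelled a \emph{conjecture} in the paper, not a theorem; the authors do not prove it, and the final section merely lists special cases (Theorem~\ref{theorem:casesconjistrue}) whose proofs are deferred to other papers. So there is no internal proof to compare your argument against. What you have written is a road map, and you yourself identify the step that would be needed to close it — nonvanishing of the canonical height along a Zariski-dense orbit for a nef (not ample) eigendivisor. That candor is appropriate: this is precisely the obstruction that keeps the statement conjectural, and your sketch lines up well with the paper's own remarks (the reduction to $Y=\overline{\Orbit_f(P)}$ appears in Remark~5, the nef eigendivisor in Remark~\ref{remark:fDdfDexists}, and the implication $\hhat_{D,f}(P)\ne0\Rightarrow\a_f(P)=\d_f$ in Theorem~\ref{theorem:hDfPbnhDfnP}(d)).

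A few points in the proposal are shakier than presented, and worth flagging. First, the reduction to the orbit closure $Y$ is not as clean as you suggest: $Y$ may be reducible or singular, and the paper's definition of $\d_f$ requires $X$ smooth (for rational maps) precisely so that $(f^n)^*$ acts on $\NS(X)_\RR$; Remark~\ref{remark:singvar} discusses exactly this subtlety, and the paper does \emph{not} claim the reduction removes the Zariski-density hypothesis, only that it ``is not as strong as it appears.'' Second, Theorem~\ref{theorem:hDfPbnhDfnP} and Remark~\ref{remark:fDdfDexists} are stated only for morphisms $f:X\to X$; for general dominant rational maps there is no guaranteed nef eigendivisor and no canonical-height convergence, so your strategy for~(d) would at best cover the morphism case. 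Third, your derivation of~(c) from~(d) via ``a Northcott-type argument bounding the possible spectral radii'' is not substantiated: $\dim\NS(Y)_\RR$ for varying invariant subvarieties $Y$ is \emph{not} obviously uniformly bounded, and the paper itself only says one can ``more-or-less reduce'' (b,c) to studying $\d_f$ on $f$-invariant subvarieties, stopping well short of a finiteness claim. So the proposal is a reasonable account of why the conjecture is plausible and how it connects to the paper's theorems, but it does not constitute a proof, and a couple of the reductions are stated with more confidence than the paper's own framework supports.
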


In the final section of this paper we briefly indicate some cases for
which we can prove Conjecture~\ref{conjecture:afPeqdfY}. These include
morphisms~$f$ when~$\NS(X)_\RR=\RR$, regular affine automorphisms,
surface automorphisms, and monomial maps. The proofs of these results,
together with other cases for which we can prove the weaker statement
that $\a_f(P)=\d_f(X)$ for a Zariski dense set of points~$P\in
X_f(\Kbar)$ having disjoint orbits, will appear in a companion
publication~\cite{kawsilv:dfeqafPexamples}.  See
also~\cite{kawsilv:jordanblock} for a proof of
Conjecture~\ref{conjecture:afPeqdfY}(a,b,c) when~$f$ is a morphism
and~(d) when~$f$ is an endomorphism of an abelian variety.

\begin{acknowledgement}
The authors would like to thank ICERM for providing a stimulating
research environment during their spring 2012 visits, as well as
the organizers of conferences on
Automorphisms (Shirahama 2011), Algebraic Dynamics (Berkeley 2012),
and the SzpiroFest (CUNY 2012), during which some of this research was
done. The authors would also like to thank 
Najmuddin Fakhruddin for his helpful comments and suggestions regarding
an earlier version of this article, including pointing out that our
original formulation of the main theorem was too general; see
Remark~\ref{remark:singvar} for details.
\end{acknowledgement}

\section{Some Brief Remarks}
\label{section:remarks}
In this section we make some brief remarks about dynamical degrees, 
arithmetic degrees, and canonical heights.

\begin{remark}
The assumption in Conjecture~\ref{conjecture:afPeqdfY}(d)
that~$\Orbit_f(P)$ be Zariski dense is not as strong as it
appears. This is because~$f$ induces a rational map on the Zariski
closure~$Y=\overline{\Orbit_f(P)}\subset X$ of the orbit.  So ignoring
the smoothness condition, we can apply
Conjecture~\ref{conjecture:afPeqdfY} to~$f|_Y$ and~$P\in Y_f(\Kbar)$
to deduce that~$\a_f(P)=\d_{f|_Y}$. Note that~$\a_f(P)$ is independent
of whether we view~$P$ as a point of~$X$ or a point of~$Y$, since the
restriction to~$Y$ of an ample height function~$h_X$ on~$X$ gives an
ample height function on~$Y$. 
\end{remark}

\begin{remark} 
Bellon and Viallet~\cite{MR1704282} conjecture that~$\d_f$ is an
algebraic integer. Assuming this and
Conjecture~\ref{conjecture:afPeqdfY}(d), one can more-or-less reduce
Conjectures~\ref{conjecture:afPeqdfY}(b,c) to the study of the values
of~$\d_f$ on the $f$-invariant subvarieties of~$X$.
\end{remark}

\begin{remark}
\label{remark:dfviaintersection}
Let~$H$ be an ample divisor on~$X$, and let $N=\dim(X)$.
Then~\cite[Proposition~1.2(iii)]{MR2179389} says that
\[
  \lim_{n\to\infty} \bigl((f^n)^*H\cdot H^{N-1}\bigr)^{1/n}
  = 
  \limsup_{n\to\infty} \rho\bigl((f^n)^*,\NS(X)_\RR)\bigr)^{1/n}.
\]
(Notice the right-hand side is a limsup.)  We will prove below
(Corollary~\ref{corollary:dfexists}) that the limit $\lim_{n\to\infty}
\rho\bigl((f^n)^*,\NS(X)_\RR)\bigr)^{1/n}$ exists, justifying our
definition of~$\d_f$ in terms of the action of~$(f^n)^*$
on~$\NS(X)_\RR$, but we note that the alternative definition of~$\d_f$
using intersection is more common and often more useful.
\end{remark}

\begin{remark}
\label{remark:singvar}
We have restricted our variety~$X$ to be smooth when~$f$ is not a
morphism. In our original formulation, we had only assumed that~$X$ is
normal. We thank Najmuddin Fakhruddin for pointing out that some
conditions are necessary to define the pull-back~$f^*$ on $\NS(X)_\RR$
for a dominant rational map $f:X\dashrightarrow X$. Fakhruddin has
indicated that it should suffice to take~$X$ to be $\QQ$-factorial. We
use the Lefschetz hyperplane theorem in the proof of
Lemma~\ref{lemma:DEffXHAmp}, but for a singular variety, one can use a
version of the Lefschetz hyperplane theorem~\cite[Theorem on
  page~153]{MR932724} for a general member of the linear system of a
very ample divisor.  Alternatively, if the orbit~$\Orbit_f(P)$ of~$P$
lies within the smooth locus~$X^\ns$ of~$X$, as is often the case,
then one can simply replace~$X$ with a smooth model of a projective
closure of~$X^\ns$ and reduce to the smooth case.
\end{remark}

\begin{remark}
In~\cite{MR1704282} the authors define the \emph{algebraic entropy
  of~$f$} to be the quantity $\log \d_f$.  It is thus tempting to
call $\log\a_f(P)$ the \emph{arithmetic entropy of~$(f,P)$}, and
indeed one can reformulate the definitions of~$\log\d_f$
and~$\log\a_f(P)$ to more closely resemble classical defininitions of
entropy. 
More generally, the $p^{\text{th}}$-dynamical degree $\d_p(f)$ may be
defined as the limiting
value of $\bigl((f^n)^*H^p\cdot H^{N-p}\bigr)^{1/n}$;
see~\cite[Corollaire~7]{MR2180409}.  Then $\log \d_p(f)$ is called the
$p^{\text{th}}$-algebraic entropy of~$f$. One might use Arakelov
intersection theory to similarly define higher codimension arithemtic
entropies for self-maps of arithmetic varieties.
\end{remark} 

\begin{remark}
We use~$\hplus_X$ instead of~$h_X$ in the definition of arithmetic
degree simply to ensure that~$\alower_f(P)\ge1$, even in the rare
situation that~$P$ is periodic and~$h_X\bigl(f^n(P)\bigr)=0$ for
some~$n$. We also note that the arithmetic degree is independent of
the choice of ample height function~$h_X$; see
Proposition~\ref{proposition:afPindepofht}.
\end{remark}

\begin{remark}
Let~$f:X\to X$ be a morphism with~$\d_f>1$, and let~$D\in\Pic(X)_\RR$ be
an ample divisor class satisfying the linear equivalence
$f^*D\sim\d_fD$. Then using properties of the classical canonical
height~$\hhat_{D,f}$, as described for example
in~\cite{callsilv:htonvariety}, it is an exercise to show that 
\[
  \hhat_f(P)>0\Longrightarrow\a_f(P)=\d_f.
\]
In the number field case, it is also an exercise to prove that
\[
  \hhat_f(P)=0\Longrightarrow\#\Orbit_f(P)<\infty,
\]
so in particular, Conjecture~\ref{conjecture:afPeqdfY} is true in this
case.  There are other situations in which one can define a canonical
height having sufficiently good properties to prove
Conjecture~\ref{conjecture:afPeqdfY}; see
Section~\ref{section:caseafPeqdf}
and~\cite{kawsilv:dfeqafPexamples,arxiv1111.5664} for examples and
further details. But in general, a rational map, or even a morphism,
does not have a canonical height with sufficiently good properties to
directly imply Conjecture~\ref{conjecture:afPeqdfY}(d).  The arithmetic
degree~$\a_f(P)$, although coarser than an ample canonical height,
may be viewed as a general non-trivial measure of the arithmetic
complexity of the $f$-orbit of~$P$.
\end{remark}

\section{Basic Properties of the Arithmetic Degree}
\label{section:defs}

In this section we verify that the upper and lower arithmetic degrees
are well-defined, independent of the choice of height function~$h_X$
on~$X$, and we prove a counting result for points in orbits.
We also prove two useful lemmas.

\begin{proposition}
\label{proposition:afPindepofht}
The upper and lower arithemtic degrees~$\aupper_f(P)$
and $\alower_f(P)$ are independent of the choice of the height
function~$h_X$.
\end{proposition}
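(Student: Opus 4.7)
The plan is to reduce the statement to the standard fact that two Weil heights associated to ample divisors on a projective variety are comparable up to bounded multiplicative and additive constants, and then to observe that taking $n$th roots makes both types of constants disappear in the limit.

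First I would fix two choices of ample Weil heights $h_X$ and $h_X'$, associated to ample divisors $H$ and $H'$ respectively. Since $H$ and $H'$ are both ample, for a sufficiently large integer $m$ both $mH-H'$ and $mH'-H$ are ample, hence in particular effective in $\NS(X)_\RR$. Standard functorial properties of Weil heights (see for instance \cite{hindrysilverman:diophantinegeometry} or \cite{lang:diophantinegeometry}) then give constants $c,C>0$ such that
\[
  \tfrac{1}{c}\,h_{X}'(P) - C \;\le\; h_{X}(P) \;\le\; c\,h_{X}'(P) + C
  \qquad\text{for all } P\in X(\Kbar).
\]
Since $\hplus_X$ and $\hplus_{X'}$ are everywhere at least $1$, absorbing the additive constant into the multiplicative one yields a constant $C'>0$ depending only on $X,h_X,h_X'$ with
\[
  \tfrac{1}{C'}\,\hplus_{X'}(P) \;\le\; \hplus_X(P) \;\le\; C'\,\hplus_{X'}(P)
  \qquad\text{for all } P\in X(\Kbar).
\]

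Next I would apply this inequality at the points $f^n(P)$ for $P\in X_f(\Kbar)$, extract $n$th roots, and pass to the limit. Specifically,
\[
  \bigl(C'\bigr)^{-1/n}\,\hplus_{X'}\bigl(f^n(P)\bigr)^{1/n}
  \;\le\; \hplus_X\bigl(f^n(P)\bigr)^{1/n}
  \;\le\; \bigl(C'\bigr)^{1/n}\,\hplus_{X'}\bigl(f^n(P)\bigr)^{1/n}.
\]
Since $(C')^{\pm 1/n}\to 1$ as $n\to\infty$, taking $\limsup$ and $\liminf$ gives $\aupper_f(P)$ and $\alower_f(P)$ computed with $h_X$ equal to the same quantities computed with $h_X'$.

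There is really no serious obstacle here; the only point requiring attention is the use of $\hplus_X\ge 1$ to convert an additive error term into a purely multiplicative one, which is precisely why the paper introduces the notation $\hplus_X$ in the first place. Everything else is routine manipulation of Weil heights, so the proof should amount to roughly the three short displays above.
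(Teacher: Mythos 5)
Your proof is correct and follows essentially the same approach as the paper: both rest on the standard comparability of ample Weil heights and on the observation that bounded constants vanish under $n$th roots. The one structural difference is that the paper first disposes of preperiodic points separately and then replaces $\hplus_X$ by $h_X$, using a one-sided inequality $mh \ge h' - C$ together with a subsequence realizing the $\limsup$; you instead keep $\hplus_X$ throughout, which lets you absorb the additive constant into a purely multiplicative one and avoid the preperiodic case split entirely. Your version is slightly cleaner for that reason, but the mathematical content is the same.
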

\begin{proof}
If~$P$ has finite~$f$-orbit, then it is clear from the definition that
the limit~$\a_f(P)$ exists and is equal to~$1$, regardless of the
choice of~$h_X$. We assume henceforth that~$P$ is not preperiodic,
which means that we can replace~$\hplus_X$ with~$h_X$ when taking
limits over the orbit of~$P$
\par
Let~$h$ and~$h'$ be heights on~$X$ relative to ample divisors~$D$
and~$D'$, and let the corresponding arithmetic degrees be denoted
respectively by $\aupper_f(P)$, $\alower_f(P)$, $\aupper'_f(P)$, and
$\alower'_f(P)$.  By definition of
ampleness~\cite[Section~II.7]{hartshorne}, there is an integer~$m$
such that~$mD-D'$ is ample, so standard functorial properties of
height
functions,
as described for example in~\cite{lang:diophantinegeometry}
of~\cite[Theorem~B.3.2]{hindrysilverman:diophantinegeometry},
imply that there is a non-negative constant~$C$ such that
\begin{equation}
  \label{eqn:mhQ}
  mh(Q)\ge h'(Q) - C\quad\text{for all $Q\in X(\Kbar)$.}
\end{equation}
We choose a sequence of indices~$\Ncal\subset\NN$ such that
\begin{equation}
  \label{eqn:limhprime}
  \lim_{n\in\Ncal} h'\bigl(f^n(P)\bigr)^{1/n}
  = \limsup_{n\to\infty} h'\bigl(f^n(P)\bigr)^{1/n}
  = \aupper'_f(P).
\end{equation}
Then
\begin{align*}
  \aupper'_f(P)
  &= \lim_{n\in\Ncal} h'\bigl(f^n(P)\bigr)^{1/n} 
    &&\text{from \eqref{eqn:limhprime},}\\
  &\le \lim_{n\in\Ncal} \bigl(mh\bigl(f^n(P)\bigr)+C\bigr)^{1/n} 
    &&\text{from \eqref{eqn:mhQ},}\\
  &\le \limsup_{n\to\infty} \bigl(mh\bigl(f^n(P)\bigr)+C\bigr)^{1/n} \\
  &= \limsup_{n\to\infty} h\bigl(f^n(P)\bigr)^{1/n}
  = \aupper_f(P).
\end{align*}
This gives one inequality for the upper arithmetic degrees, and
reversing the roles of~$h$ and~$h'$ gives the opposite inequality,
which proves that $\aupper_f'(P)=\aupper_f(P)$.  We omit the similar
proof that $\alower_f'(P)=\alower_f(P)$.
\end{proof}

The following lemma says that~$\aupper_f(P)$ and~$\alower_f(P)$ depend
only on the eventual orbit of~$P$.

\begin{lemma}
\label{lemma:affkPeqafP}
Let $f:X\dashrightarrow X$ be a rational map defined over~$\Kbar$.
Then for all $P\in X_f(\Kbar)$ and all~$k\ge0$,
\[
  \aupper_f\bigl(f^k(P)\bigr) = \aupper_f(P)
  \quad\text{and}\quad
  \alower_f\bigl(f^k(P)\bigr) = \alower_f(P).
\]
\end{lemma}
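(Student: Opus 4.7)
The plan is to unwind the definitions and reduce to an elementary limit identity: the two arithmetic degrees differ only by the shift in the exponent $1/n$ versus $1/(n+k)$, which is asymptotically negligible. By definition,
\[
  \aupper_f\bigl(f^k(P)\bigr) = \limsup_{n\to\infty} \hplus_X\bigl(f^{n+k}(P)\bigr)^{1/n}
  \quad\text{and}\quad
  \aupper_f(P) = \limsup_{m\to\infty} \hplus_X\bigl(f^m(P)\bigr)^{1/m}.
\]
Taking logarithms and setting $\ell_m := \log\hplus_X\bigl(f^m(P)\bigr)\ge 0$, the claim for $\aupper_f$ becomes
\[
  \limsup_{n\to\infty}\frac{\ell_{n+k}}{n}
  = \limsup_{m\to\infty}\frac{\ell_m}{m},
\]
and the corresponding liminf statement is of the same shape.

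For the $\le$ direction, I would pick a subsequence $\{n_j\}$ along which $\ell_{n_j+k}/n_j$ tends to the left-hand $\limsup$, call it $L\in[0,\infty]$, and use the factorization
\[
  \frac{\ell_{n_j+k}}{n_j+k}
  = \frac{\ell_{n_j+k}}{n_j}\cdot\frac{n_j}{n_j+k}.
\]
Since $n_j/(n_j+k)\to 1$, this produces a subsequential limit of $\ell_m/m$ equal to $L$ when $L<\infty$, and diverging to $\infty$ when $L=\infty$ (via the crude bound $n_j/(n_j+k)\ge\tfrac12$ for $n_j\ge k$), so $\limsup_m\ell_m/m\ge L$. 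For the reverse inequality I would argue symmetrically: take a subsequence $\{m_j\}$ realizing $\limsup_m\ell_m/m$, set $n_j:=m_j-k$ for $m_j>k$, and apply the dual identity $\ell_{n_j+k}/n_j=(\ell_{m_j}/m_j)\cdot\bigl(m_j/(m_j-k)\bigr)$ together with $m_j/(m_j-k)\to 1$.

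The statement for $\alower_f$ is proved by the identical two-sided argument, applied this time to subsequences realizing the $\liminf$ rather than the $\limsup$. I do not expect any substantive obstacle here; the lemma is pure bookkeeping reflecting the invariance of exponential growth rates under the bounded multiplicative perturbation $(n+k)/n\to1$. The only point requiring any care is the case $L=\infty$, which is handled once one observes that the correction ratios $n/(n+k)$ and $m/(m-k)$ are bounded between $1/2$ and $2$ for $n,m\ge k$, so divergence on one side forces divergence on the other.
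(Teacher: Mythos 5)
Your proof is correct and follows essentially the same idea as the paper's: the only thing distinguishing $\aupper_f(f^k(P))$ from $\aupper_f(P)$ is that the exponent $1/n$ is replaced by $1/(n+k)$, and the ratio $(n+k)/n\to1$ washes this out. The paper encodes this directly, writing $\hplus_X(f^{n+k}(P))^{1/n}=\bigl(\hplus_X(f^{n+k}(P))^{1/(n+k)}\bigr)^{1+k/n}$ and dropping the exponent $1+k/n$ in the $\limsup$; you unpack the same step via logarithms and subsequence extraction, which is a bit longer but makes the (elided) case $L=\infty$ and the passage through the perturbed exponent completely explicit.
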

\begin{proof}
We compute
\begin{align*}
  \aupper_f\bigl(f^k(P)\bigr)
  &= \limsup_{n\to\infty} \hplus_X\bigl(f^{n+k}(P)\bigr)^{1/n}\\
  &= \limsup_{n\to\infty} 
          \left(\hplus_X\bigl(f^{n+k}(P)\bigr)^{1/(n+k)}\right)^{1+k/n}\\
  &= \limsup_{n\to\infty} \hplus_X\bigl(f^{n+k}(P)\bigr)^{1/(n+k)}\\
  &=\aupper_f(P).
\end{align*}
The proof for~$\alower_f$ is similar, which completes the proof of
Lemma~\ref{lemma:affkPeqafP}.
\end{proof}

We next prove Proposition~\ref{proposition:limBQhXQleB}, which we
recall says that if the limit defining~$\a_f(P)$ exists, then the
growth of the height counting function of the orbit of~$P$ is given 
by~\eqref{eqn:limBQhXQleB}.

\begin{proof}[Proof of Proposition~$\ref{proposition:limBQhXQleB}$]
Since~$\#\Orbit_f(P)=\infty$, it suffices to
prove~\eqref{eqn:limBQhXQleB} with~$\hplus_X$ in place of~$h_X$.  For
every~$\e>0$ there is an~$n_0(\e)$ so that
\[
  (1-\e)\a_f(P) \le \hplus_X\bigl(f^n(P)\bigr)^{1/n} \le (1+\e)\a_f(P)
  \quad\text{for all $n\ge n_0(\e)$.}
\]
It follows that
\[
  \bigl\{n\ge n_0(\e) : (1+\e)\a_f(P) \le B^{1/n}\bigr\} 
  \subset\bigl\{n\ge n_0(\e) : \hplus_X\bigl(f^n(P)\bigr) \le B\bigr\}
\]
and
\[
  \bigl\{n\ge n_0(\e) : \hplus_X\bigl(f^n(P)\bigr) \le B\bigr\}
  \subset \bigl\{n\ge n_0(\e) : (1-\e)\a_f(P) \le B^{1/n}\bigr\}.
\]
Counting the number of elements in these sets yields
\begin{align*}
  \frac{\log B}{\log\bigl((1+\e)\a_f(P)\bigr)} - 1
  &\le \# \bigl\{n\ge 0 : \hplus_X\bigl(f^n(P)\bigr) \le B\bigr\} \\
  \noalign{\noindent and}
   \# \bigl\{n\ge 0 : \hplus_X\bigl(f^n(P)\bigr) \le B\bigr\}
  &\le  \frac{\log B}{\log\bigl((1-\e)\a_f(P)\bigr)} + n_0(\e) + 1.
\end{align*}
Dividing by~$\log B$ and letting~$B\to\infty$ gives
\[
  \frac{1}{\log\bigl((1+\e)\a_f(P)\bigr)}
  \le \liminf_{B\to\infty} 
   \frac{\#\bigl\{Q\in\Orbit_f(P) : \hplus_X(Q)\le B\bigr\}}{\log B}
\]
and
\[
  \limsup_{B\to\infty} \frac{\#\bigl\{Q\in\Orbit_f(P) 
        : \hplus_X(Q)\le B\bigr\}}{\log B}
  \le \frac{1}{\log\bigl((1-\e)\a_f(P)\bigr)}.
\]
Since~$\e$ is arbitrary, and the liminf is less than or equal 
to the limsup, this completes the proof that
\[
  \lim_{B\to\infty} \frac{\#\bigl\{Q\in\Orbit_f(P)
            : \hplus_X(Q)\le B\bigr\}}{\log B}
  = \frac{1}{\log\a_f(P)},
\]
including the fact that if~$\a_f(P)=1$, then the limit is~$\infty$.
\end{proof}

The following elementary linear algebra result will be used in
the proof of Theorem~\ref{theorem:afPledfintro}.

\begin{lemma}
\label{lemma:rhoAAn1n}
Let~$A=(a_{ij})\in M_r(\CC)$ be an $r$-by-$r$ matrix.
Let~$\|A\|=\max|a_{ij}|$, and as usual let~$\rho(A)$ denote the
spectral radius of~$A$. Then there are constants~$c_1$ and~$c_2$,
depending on~$A$, such that
\begin{equation}
  \label{eqn:anLogAnlogrA}
    c_1 \rho(A)^n \le \|A^n\| \le c_2 n^r \rho(A)^n
  \quad\text{for all $n\ge0$.}
\end{equation}
In particular, we have $\rho(A) = \lim_{n\to\infty} \|A^n\|^{1/n}$.
\end{lemma}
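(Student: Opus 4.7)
The plan is to reduce the claim to the Jordan canonical form of $A$ and to compute the sizes of powers of Jordan blocks directly; the spectral-radius formula will then follow by taking $n$-th roots and squeezing.

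Since $M_r(\CC)$ is finite-dimensional, all norms on it are equivalent. In particular, the max-entry norm $\|\cdot\|$ and the operator norm $\|\cdot\|_{\text{op}}$ induced by the Euclidean norm on~$\CC^r$ differ only by fixed multiplicative constants. It therefore suffices to establish \eqref{eqn:anLogAnlogrA} for $\|\cdot\|_{\text{op}}$, with the conversion factors absorbed into $c_1$ and $c_2$.

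Next, I would write $A=PJP^{-1}$ with $J$ a direct sum of Jordan blocks $J_{\lambda,k}$ of sizes $k\le r$ and eigenvalues $\lambda$ satisfying $|\lambda|\le\rho(A)$. Since $A^n=PJ^nP^{-1}$, the quantity $\|A^n\|_{\text{op}}$ is pinched between $\bigl(\|P\|_{\text{op}}\|P^{-1}\|_{\text{op}}\bigr)^{-1}\|J^n\|_{\text{op}}$ and $\|P\|_{\text{op}}\|P^{-1}\|_{\text{op}}\|J^n\|_{\text{op}}$. A direct computation shows that the $(i,i+j)$-entry of $J_{\lambda,k}^n$ equals $\binom{n}{j}\lambda^{n-j}$ for $0\le j<k$ and vanishes otherwise, so $\|J_{\lambda,k}^n\|_{\text{op}}\le C(\lambda,k)\,n^{k-1}\rho(A)^n$ when $|\lambda|=\rho(A)>0$, Jordan blocks with $|\lambda|<\rho(A)$ contribute an exponentially smaller term, and nilpotent blocks (when they occur) vanish for $n\ge r$. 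Summing over blocks and bounding $n^{k-1}\le n^r$ yields the upper half of \eqref{eqn:anLogAnlogrA}. For the lower half, I would choose an eigenvalue $\lambda_0$ with $|\lambda_0|=\rho(A)$ and a nonzero eigenvector $v\in\CC^r$; then $A^nv=\lambda_0^n v$, whence
\[
  \|A^n\|_{\text{op}}\ge \frac{\|A^nv\|_2}{\|v\|_2}=\rho(A)^n.
\]

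Finally, the assertion $\rho(A)=\lim_{n\to\infty}\|A^n\|^{1/n}$ is immediate from \eqref{eqn:anLogAnlogrA} by taking $n$-th roots and using $c_1^{1/n}\to1$ and $(c_2 n^r)^{1/n}\to1$. The only mildly delicate point, and the main bookkeeping obstacle, is the degenerate case $\rho(A)=0$: here $A$ is nilpotent with $A^n=0$ for $n\ge r$, so both sides of the upper bound vanish eventually, and one verifies separately that the finitely many small-$n$ contributions can be absorbed by enlarging $c_2$. Apart from this, the argument is simply the classical Gelfand spectral radius formula, made quantitative via Jordan form.
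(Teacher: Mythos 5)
Your proposal is correct and follows essentially the same route as the paper: both pass to the Jordan form $A = PJP^{-1}$, read off the entries of $J^n$ as binomial coefficients times powers of eigenvalues, and convert back to $A^n$ using conjugation and norm comparison. The two cosmetic differences are (i) you work with the operator norm and appeal to equivalence of norms on $M_r(\CC)$, whereas the paper works directly with the max-entry norm $\|\cdot\|$ and its submultiplicativity $\|AB\|\le r\|A\|\,\|B\|$; and (ii) for the lower bound you invoke an eigenvector $v$ with $Av=\lambda_0 v$, $|\lambda_0|=\rho(A)$, to get $\|A^n\|_{\text{op}}\ge\rho(A)^n$ directly, which is a slightly cleaner step than the paper's combinatorial lower bound $\|\Lambda^n\|\ge\rho(A)^{n-r}$ from $\binom{n}{i}\ge1$.

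One small caveat: your remark about the degenerate case $\rho(A)=0$ does not quite work as stated. If $\rho(A)=0$, the upper bound in \eqref{eqn:anLogAnlogrA} reads $\|A^n\|\le c_2\, n^r\cdot 0 = 0$, and no enlargement of $c_2$ can repair this for the finitely many $n<r$ with $A^n\ne0$ (nor for $n=0$, where $\|I\|=1$ but $n^r=0$). This is a genuine defect of the statement itself rather than of either proof, and it is harmless in the paper's applications since the relevant matrices $A(f^\ell)$ have $\rho\ge1$; but the claim that one ``absorbs'' the small-$n$ terms by enlarging $c_2$ should be replaced by the observation that the lemma implicitly assumes $\rho(A)>0$ (or, for the upper bound, $n\ge1$).
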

\begin{proof}
For any matrices~$A$ and~$B$ in~$M_r(\CC)$,the triangle inequality
gives the estimate
\[
  \|AB\| \le r\|A\|\cdot\|B\|.
\]
We write~$A=P\L P^{-1}$ with~$\L$ in Jordan normal form.  Let~$\l$ be
an eigenvalue of~$A$ having largest absolute value such that among
such largest eigenvalues, it has the largest Jordan block. Let the
dimension of the largest $\l$-Jordan block be~$\ell$. Then
\[
  \|\L^n\| = \max_{0\le i<\ell} \left\{ \binom{n}{i}|\l|^{n-i} \right\}.
\]
Since $r\le \ell$ and $|\l|=\rho(A)$, the trivial
estimates $1\le\binom{n}{i}\le n^r$ gives
\begin{equation}
  \label{eqn:rAnr}
  \rho(A)^{n-r} \le \|\L^n\| \le n^r\rho(A)^n.
\end{equation}
We next observe that
\begin{align*}
  \|A^n\|&=\|P\L^nP^{-1}\| \le r^2\|P\|\cdot\|P^{-1}\|\cdot\|\L^n\|,\\
  \|\L^n\|&=\|P^{-1}A^nP\| \le r^2\|P^{-1}\|\cdot\|P\|\cdot\|A^n\|,
\end{align*}
so setting $C=C(A)=r^2\|P\|\cdot\|P^{-1}\|>0$, we have 
\begin{equation}
  \label{eqn:CLnAnCLn}
  C^{-1}\|\L^n\| \le \|A^n\| \le C\|\L^n\|
  \quad\text{for all $n\ge0$.}
\end{equation}
Combining~\eqref{eqn:rAnr} and~\eqref{eqn:CLnAnCLn}
gives~\eqref{eqn:anLogAnlogrA}, and then taking~$n^{\text{th}}$-roots
and letting~$n\to\infty$ gives $\|A^n\|^{1/n}\to\rho(A)$.
\end{proof}

\section{A divisor inequality for rational maps}
\label{section:divisorinequality}

Let $f:X\dashrightarrow X$ be a rational map. Our goal in this section
is to prove the following geometric inequality relating the actions
of~$(f^*)^n$ and~$(f^n)^*$ on the vector space~$\NS(X)_\RR$. This
result will provide a crucial estimate in our proof
that $h_X\circ f^n\ll(\d_f+\e)^nh_X$.

\begin{theorem}
\label{thm:AfmleCAfm}
Let $X$ be a smooth projective variety, and fix a basis $D_1, \ldots,
D_r$ for $\NS(X)_\RR$.  A dominant rational map $g: X
\dasharrow X$ induces a linear map on $\NS(X)_\RR$, and we write
\[
  g^* D_j \equiv \sum_{i=1}^r a_{ij}(g) D_i
  \quad\text{and}\quad A(g)=\bigl(a_{ij}(g)\bigr) \in M_r(\RR).
\]
We also let~$\|\,\cdot\,\|$ denote the sup norm on~$M_r(\RR)$.
Then for any dominant rational map $f:X\dashrightarrow X$
there is a constant $C=C(f,D_1,\ldots,D_r)>0$ such that
\begin{align}
  \label{eqn:AfmnleCAfmAfn}
  \bigl\|A(f^{m+n})\bigr\| 
      &\le C \bigl\|A(f^m)\bigr\| \cdot\bigl\|A(f^n)\bigr\|
            \quad\text{for all $m,n\ge1$,}  \\
  \label{eqn:AfmleCAfm}
  \bigl\|A(f^{m})\bigr\| 
      &\le C \bigl\|A(f)^m\bigr\|
            \quad\text{for all $m\ge1$.}  
\end{align}
\end{theorem}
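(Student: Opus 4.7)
My plan is to reduce both inequalities to one geometric
``sub-functoriality'' statement for pull-back of nef classes under
composition: for dominant rational $f,g:X\dashrightarrow X$ on a
smooth projective variety and for any nef class $D\in\NS(X)_\RR$, the
difference $f^{*}g^{*}D-(g\circ f)^{*}D$ is pseudoeffective. I will
take this as input; it is the standard ``failure of functoriality'' for
rational pull-backs, proved by comparing both sides on a common
resolution of indeterminacy. In addition I need two elementary
properties: (i) for any dominant rational $g:X\dashrightarrow X$, the
operator $g^{*}$ preserves the pseudoeffective cone of $\NS(X)_\RR$;
and (ii) for a fixed ample divisor $H$ on $X$, the linear functional
$E\mapsto E\cdot H^{N-1}$ is strictly positive on the pseudoeffective
cone away from the origin, so by compactness of its intersection with
the unit sphere there is a constant $C_{0}$ with
$\|E\|\le C_{0}(E\cdot H^{N-1})$ for every pseudoeffective $E$, where
$\|\cdot\|$ is the coordinate sup-norm in the basis. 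A change-of-basis
argument lets me assume the $D_{i}$ are ample, so that
$c_{i}:=D_{i}\cdot H^{N-1}>0$.

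For \eqref{eqn:AfmnleCAfmAfn} I apply the key inequality with outer
map $g=f^{m}$ and inner map $f^{n}$ to obtain
$(f^{m+n})^{*}D_{j}\preceq (f^{n})^{*}(f^{m})^{*}D_{j}$, both sides
pseudoeffective by~(i). Expanding the right-hand side in the basis
and intersecting with $H^{N-1}$, then invoking~(ii) on the
pseudoeffective class $(f^{m+n})^{*}D_{j}$, turns the
pseudoeffective inequality into a coordinate bound
\[
  \max_{i}\bigl|a_{ij}(f^{m+n})\bigr|
  \;\le\; C_{0}\bigl((f^{m+n})^{*}D_{j}\cdot H^{N-1}\bigr)
  \;\le\; C\,\bigl\|A(f^{n})\bigr\|\cdot\bigl\|A(f^{m})\bigr\|,
\]
where $C$ absorbs $C_{0}$, $\sum_{i}c_{i}$, and the factor $r$ coming
from sub-multiplicativity of the sup norm under matrix
multiplication. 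Taking the maximum over $j$ gives
\eqref{eqn:AfmnleCAfmAfn}.

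For \eqref{eqn:AfmleCAfm} the essential point is to iterate at the
level of classes rather than matrix norms, since naive iteration of
\eqref{eqn:AfmnleCAfmAfn} only yields
$\|A(f^{m})\|\le C^{m-1}\|A(f)\|^{m}$, which is useless because
$\|A(f)^{m}\|$ can grow far more slowly than $\|A(f)\|^{m}$. Writing
$f^{k+1}=f^{k}\circ f$ and applying the sub-functoriality with inner
map $f$ gives $(f^{k+1})^{*}D\preceq f^{*}(f^{k})^{*}D$; since $f^{*}$
preserves pseudoeffectivity by~(i), an induction on $m$ produces the
chain
\[
  (f^{m})^{*}D_{j}\;\preceq\;f^{*}(f^{m-1})^{*}D_{j}
  \;\preceq\;(f^{*})^{2}(f^{m-2})^{*}D_{j}\;\preceq\;\cdots\;
  \preceq\;(f^{*})^{m}D_{j}\;\equiv\;\sum_{i}\bigl(A(f)^{m}\bigr)_{ij}D_{i},
\]
the last identity being the definition of the matrix $A(f)$
representing the linear operator $f^{*}$ on $\NS(X)_\RR$.
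Intersecting with $H^{N-1}$ and applying~(ii) once more to the
pseudoeffective class $(f^{m})^{*}D_{j}$ yields
$\max_{i}|a_{ij}(f^{m})|\le C\|A(f)^{m}\|$, whence
\eqref{eqn:AfmleCAfm}.

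The main obstacle I anticipate is the geometric sub-functoriality
inequality $(g\circ f)^{*}D\preceq f^{*}g^{*}D$ for nef $D$ itself;
everything downstream is packaging, with~(ii) serving to convert the
scalar ``$H^{N-1}$-degree'' back into a full basis-coordinate
sup-norm bound. The upshot is that \eqref{eqn:AfmleCAfm} records
precisely the fact that, up to a bounded multiplicative loss, the
geometric iterate $(f^{m})^{*}$ on $\NS(X)_\RR$ is dominated by the
purely linear-algebraic iterate $(f^{*})^{m}$ of the first pull-back.
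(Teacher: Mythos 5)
Your proposal is correct and follows essentially the same path as the paper's: the key geometric input is the sub-functoriality $(g\circ f)^*D\preceq f^*(g^*D)$ for nef $D$ (the paper's Proposition~\ref{proposition:fgDminusgfD}, proved via resolution of indeterminacy), combined with the observation that $f^*$ preserves $\overline{\Eff}(X)$ and that intersection with $H^{N-1}$ gives a norm on that cone (the paper's Lemmas~\ref{lemma:DEffXHAmp} and~\ref{lemma:absvleCvHN}). Your explicit chain $(f^m)^*D_j\preceq f^*(f^{m-1})^*D_j\preceq\cdots\preceq(f^*)^mD_j$, with each step justified by $f^*$-invariance of the pseudoeffective cone, is a slightly more transparent bookkeeping of the step the paper compresses into ``from Proposition~\ref{proposition:fgDminusgfD}'' (where one in fact needs the effective/psef form of that proposition rather than the stated scalar intersection inequality to license the iteration), but the mathematical content is the same and this is not a genuinely different route.
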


We remark that an immediate corollary is the convergence of the limit
defining the dynamical degree.

\begin{corollary}
\label{corollary:dfexists}
The limit $\d_f=\lim_{n\to\infty}\rho\bigl((f^n)^*,\NS(X)_\RR\bigr)^{1/n}$
converges.
\end{corollary}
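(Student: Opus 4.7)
The plan is to combine the two inequalities in Theorem~\ref{thm:AfmleCAfm} with Lemma~\ref{lemma:rhoAAn1n}. Writing $a_n = \|A(f^n)\|$, inequality~\eqref{eqn:AfmnleCAfmAfn} rearranges to $\log(Ca_{m+n}) \le \log(Ca_m) + \log(Ca_n)$, so the sequence $b_n := \log(Ca_n)$ is subadditive. Fekete's subadditive lemma then gives $\lim_n b_n/n = \inf_n b_n/n$, and since $(\log C)/n \to 0$, the limit $\mu := \lim_n a_n^{1/n}$ exists in $[0,\infty)$.

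It remains to show $\lim_n \rho\bigl(A(f^n)\bigr)^{1/n} = \mu$. For the upper bound, the equivalence of norms on $M_r(\RR)$ yields a constant $c_0 = c_0(r)$ (obtained, for instance, from $\rho(A) \le \|A\|_{\mathrm{op}} \le r\|A\|$) such that $\rho(A) \le c_0 \|A\|$ for every $A \in M_r(\RR)$; applying this to $A(f^n)$ and taking $n$-th roots gives $\limsup_n \rho\bigl(A(f^n)\bigr)^{1/n} \le \mu$. For the matching lower bound I apply inequality~\eqref{eqn:AfmleCAfm} with the rational map $f^n$ playing the role of $f$: there is a constant $C_n > 0$ such that
\[
  a_{nk} = \bigl\|A\bigl((f^n)^k\bigr)\bigr\| \le C_n \bigl\|A(f^n)^k\bigr\|
  \quad\text{for all $k \ge 1$.}
\]
Feeding in the upper estimate $\|A(f^n)^k\| \le c_2\bigl(A(f^n)\bigr)\, k^r \rho\bigl(A(f^n)\bigr)^k$ from Lemma~\ref{lemma:rhoAAn1n}, taking $k$-th roots, and sending $k \to \infty$ yields $\mu^n \le \rho\bigl(A(f^n)\bigr)$, i.e., $\rho\bigl(A(f^n)\bigr)^{1/n} \ge \mu$ for every $n$. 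Combined with the upper bound, this forces the convergence $\lim_n \rho\bigl(A(f^n)\bigr)^{1/n} = \mu$.

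The main subtlety is the asymmetric roles of the two inequalities in Theorem~\ref{thm:AfmleCAfm}: \eqref{eqn:AfmnleCAfmAfn} is what delivers the existence of $\mu$ through Fekete subadditivity, whereas \eqref{eqn:AfmleCAfm} is needed only for the matching lower bound and must be applied to each iterate $f^n$, not to $f$ itself. The resulting constant $C_n$ depends on $n$, but this dependence is harmless because $C_n^{1/k} \to 1$ in the inner limit $k \to \infty$, which is taken before any passage to a statement about $\rho\bigl(A(f^n)\bigr)^{1/n}$.
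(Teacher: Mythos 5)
Your proof is correct, and it is more careful than the paper's own argument. The paper's proof of Corollary~\ref{corollary:dfexists} asserts that \eqref{eqn:AfmnleCAfmAfn} ``gives'' the quasi-subadditive estimate $\log\rho\bigl((f^{m+n})^*\bigr)\le\log\rho\bigl((f^m)^*\bigr)+\log\rho\bigl((f^n)^*\bigr)+O(1)$, and then says the rest is an exercise. But that displayed inequality does not follow directly from \eqref{eqn:AfmnleCAfmAfn}: the spectral radius is bounded above by a constant times the sup-norm, but the reverse bound $\|A\|\le c\,\rho(A)$ fails for cone-preserving matrices (for instance $A=\left(\begin{smallmatrix}1&M\\0&1\end{smallmatrix}\right)$ and $B=\left(\begin{smallmatrix}1&0\\M&1\end{smallmatrix}\right)$ preserve the positive quadrant, have $\rho(A)=\rho(B)=1$, yet $\rho(AB)\sim M^2$), so one cannot convert the norms on the right-hand side of \eqref{eqn:AfmnleCAfmAfn} into spectral radii with a uniform constant. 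Your argument supplies exactly the missing content: Fekete applied to $\log\bigl(C\|A(f^n)\|\bigr)$ gives the norm limit $\mu$; the one-sided comparison $\rho\le c_0\|\cdot\|$ yields $\limsup_n\rho\bigl(A(f^n)\bigr)^{1/n}\le\mu$; and the matching lower bound comes from applying \eqref{eqn:AfmleCAfm} to $f^n$ together with Lemma~\ref{lemma:rhoAAn1n}, the $n$-dependent constant $C_n$ being harmless because the $k$-th root is taken before $n$ is varied. In particular you use \emph{both} inequalities of Theorem~\ref{thm:AfmleCAfm}, each in a distinct role --- \eqref{eqn:AfmnleCAfmAfn} only for subadditivity of the norms, and \eqref{eqn:AfmleCAfm}, applied iterate-by-iterate, only for the lower bound --- whereas the paper's sketch invokes only the first. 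Your write-up is the correct elaboration of what the authors presumably intend behind ``it is an exercise.''
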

\begin{proof}
With notation as in the statement of Theorem~\ref{thm:AfmleCAfm},
we have $\rho\bigl((f^n)^*,\NS(X)_\RR\bigr)=\rho\bigl(A(f^n)\bigr)$,
so~\eqref{eqn:AfmnleCAfmAfn} gives
\[
  \log\rho\bigl((f^{m+n})^*\bigr)
  \le \log\rho\bigl((f^{m})^*\bigr) + \log\rho\bigl((f^{n})^*\bigr)
  + O(1).
\]
Using this convexity estimate, it is an exercise to show that the
sequence~$\frac{1}{n}\rho\bigl((f^n)^*\bigr)$ is Cauchy.
\end{proof}

We start the proof of Theorem~\ref{thm:AfmleCAfm} with a preliminary
result relating~$(g\circ f)^*$ and~$f^*\circ g^*$.  This is
essentially shown in \cite[Proof of Proposition~1.2(ii)]{MR2179389} by
an analytic argument; cf.\ the equation labeled~($\dag$)
in~\cite{MR2179389}. We give an algebraic proof.

\begin{proposition}
\label{proposition:fgDminusgfD}
Let $X, Y, Z$ be smooth projective varieties of the same dimension,
and let $f: X \dasharrow Y$ and $g: Y \dasharrow Z$ be dominant
rational maps. Let $D$ be a nef divisor on $Z$. 
Then for any nef divisor $H$ on $X$, we have
\begin{equation}
  \label{eqn:fgstardHn}
  (g \circ f)^* D \cdot H^{N-1}
  \leq  f^*( g^*D) \cdot H^{N-1}. 
\end{equation}
\end{proposition}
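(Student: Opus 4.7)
The strategy is to resolve the indeterminacies of $f$ and $g$ simultaneously on a common smooth birational model, identify the difference $f^*(g^*D)-(g\circ f)^*D$ as a pushforward of a $\tau$-exceptional divisor, and then apply the negativity lemma.

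First I would choose a smooth projective $Y'$ with a birational morphism $\tau\colon Y'\to Y$ such that $g':=g\circ\tau\colon Y'\to Z$ is an honest morphism, and then a smooth projective $W$ of dimension $N$ with a birational morphism $\pi\colon W\to X$ and a morphism $\tilde f\colon W\to Y'$ satisfying $\tau\circ\tilde f=f\circ\pi$; such a $W$ exists as a desingularization of the closure of the graph of $\tau^{-1}\circ f$ in $X\times Y'$. Writing $E':=(g')^*D$, which is nef on $Y'$ because $g'$ is a morphism and $D$ is nef, the definition of pullback along a rational map via pushforward from a resolution yields
\[
(g\circ f)^*D=\pi_*\tilde f^*E'
\quad\text{and}\quad
f^*(g^*D)=\pi_*\tilde f^*\bigl(\tau^*\tau_*E'\bigr),
\]
where in the second identity I used $g^*D=\tau_*E'$ (by definition, since $\tau$ resolves $g$) together with $f\circ\pi=\tau\circ\tilde f$.

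Setting $F:=\tau^*\tau_*E'-E'$, the two expressions differ by $\pi_*\tilde f^*F$. The projection formula then gives
\[
\bigl(f^*(g^*D)-(g\circ f)^*D\bigr)\cdot H^{N-1}
=\tilde f^*F\cdot\pi^*H^{N-1},
\]
and since $\pi^*H$ is nef on $W$, it suffices to prove that $F$ is effective on $Y'$: then $\tilde f^*F$ is an effective divisor on $W$ whose intersection with $N-1$ copies of a nef class is non-negative.

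The main step, which is where I expect any real difficulty to lie, is the effectivity of $F$, and this is where the nefness of $D$ enters. First, $F$ is $\tau$-exceptional, since $\tau_*F=\tau_*\tau^*\tau_*E'-\tau_*E'=\tau_*E'-\tau_*E'=0$ using $\tau_*\tau^*=\mathrm{id}$ on divisor classes. Second, $-F$ is $\tau$-nef: for any irreducible curve $C\subset Y'$ contracted by $\tau$, the projection formula yields $\tau^*\tau_*E'\cdot C=\tau_*E'\cdot\tau_*C=0$, so $(-F)\cdot C=E'\cdot C\ge 0$ by nefness of $E'$. The negativity lemma for proper birational morphisms of smooth projective varieties then forces $F$ to be effective, which finishes the argument. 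The remaining work is essentially bookkeeping — constructing the common resolution $W$ and carefully tracking pullbacks of rational maps through the diagram — plus a standard appeal to the negativity lemma in the form adequate to an $\mathbb{R}$-Cartier $\tau$-exceptional divisor.
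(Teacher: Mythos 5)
Your proof is correct, and it takes a genuinely different route from the paper's. The paper's argument for this proposition reduces first to $D$ ample, then very ample, and then chooses a sufficiently generic \emph{effective representative} of $D$ that avoids the images of various exceptional divisors in the resolutions of $f$, $g$, and $g\circ f$; with that genericity in hand, the paper identifies both $g^*D$ and $(g\circ f)^*D$ as Zariski closures of naive pullbacks over open loci where the maps are honest morphisms, observes these agree on that common open set, and concludes that $f^*(g^*D)-(g\circ f)^*D$ is effective as an actual divisor. Your approach instead works entirely at the level of divisor classes, avoids the genericity reduction, and isolates the heart of the matter in the claim that $F=\tau^*\tau_*E'-E'$ is effective, which you establish with the negativity lemma (a $\tau$-exceptional divisor with $-\tau$-nef negative is effective). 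What is striking is that this is exactly the content of the paper's own Lemma~\ref{lemma:pgDgDeff}, which the authors introduce later (in proving the height inequality Proposition~\ref{proposition:hDflehfD}) but do not use here. So in effect you have anticipated the paper's later lemma and applied it earlier, producing a more uniform and conceptually cleaner proof. The trade-off is that the paper's proof of this particular proposition is free of the negativity lemma and runs on bare-hands moving-lemma considerations, which some readers may find more elementary, whereas yours requires the negativity lemma but eliminates the delicate choice of a generic representative of $D$ and the attendant bookkeeping about which divisors in the resolution map to small loci. Both give the same intersection-theoretic conclusion once one intersects the effective class with the nef power $H^{N-1}$ (pulled back to the resolution in your version, via the projection formula).

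One minor point of care that you handle correctly but is worth flagging: for $\tilde f^*F$ to make sense and be effective, you need $\tilde f$ dominant so that no component of $F$ contains the image of $\tilde f$. Dominance of $\tilde f$ does follow, as you implicitly use, from dominance of $\tau\circ\tilde f = f\circ\pi$ and birationality of $\tau$.
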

\begin{proof}
We blow up the indeterminacy locus $I_f$ of $f$ so that we have a
smooth projective variety $\widetilde{X}$, a birational morphism
$\pi_{\widetilde{X}}: \widetilde{X} \to X$, and a morphism
$\widetilde{f}: \widetilde{X} \to Y$ such that $\widetilde{f} = f
\circ \pi_{\widetilde{X}}$ (\cite[Example~II.7.17.3]{hartshorne}).
Similarly, we blow up the indeterminacy
locus $I_g$ of $g$ so that we have a smooth projective variety
$\widetilde{Y}$, a birational morphism $\pi_{\widetilde{Y}}:
\widetilde{Y} \to Y$, and a morphism $\widetilde{g}: \widetilde{Y} \to
Z$ such that $\widetilde{g} = g \circ \pi_{\widetilde{Y}}$.
\par
Let $h: \widetilde{X} \dasharrow \widetilde{Y}$ be the induced
dominant rational map. We blow up the indeterminacy locus $I_h$ of $h$
so that we have a smooth projective variety $W$, a birational morphism
$\pi_{W}: W \to \widetilde{X}$, and a morphism $\widetilde{h}: W \to
\widetilde{Y}$ such that $\widetilde{h} = h \circ \pi_{W}$. The varieties
and maps are illustrated in Figure~\ref{figure:blowup}
\par
Since nef divisors are limits of ample divisors, we may assume
that~$D$ is ample. Replacing $D$ by $kD$ for sufficiently large $k$,
we may assume that $D$ is very ample and represented by an effective
divisor with the following properties:
\begin{align}
  &\parbox[t]{.85\hsize}
      {\hangindent1em\hangafter1\noindent
        \hbox to1em{\textbullet\hfil}$D$ does not contain the image of any
        divisor in~$W$ that maps to a smaller dimensional variety
        in~$Y$ or~$Z$. Also, $D$ does not contain the image of any
        divisor in~$W$ whose image in~$X$ is contained in the
        Zariski closue of $(f|_{X\setminus I_f})^{-1}(I_g)$.}
    \label{eqn:image1} \\
  &\parbox[t]{.85\hsize}
      {\hangindent1em\hangafter1\noindent
        \hbox to1em{\textbullet\hfil}$D$ does not contain the image of any
        divisor in~$\widetilde Y$ that maps to a smaller dimensional variety
        in~$Y$ or~$Z$.}
    \label{eqn:image2} 
\end{align}
With these assumptions, we claim that the divisor
\begin{equation}
  \label{eqn:fgstard}
  f^*(g^*D) - (g \circ f)^* D
\end{equation}
is effective, and hence has non-negative intersection with~$H^{N-1}$.

\begin{figure}[ht]
\[
\begin{array}{ccccc}
  W 
  \\[10pt]
  {\scriptstyle\pi_W}\Big\downarrow\phantom{\scriptstyle\pi_W}
    &\smash{\raisebox{5pt}{$\stackrel{\;\widetilde{h}}{\searrow}$}} 
  \\[10pt]
  \widetilde{X} 
    &\stackrel{h}{\dashrightarrow}
    &\widetilde{Y} 
  \\[10pt]
  {\scriptstyle\pi_{\widetilde X}}\Big\downarrow
              \phantom{\scriptstyle\pi_{\widetilde X}}
    &\smash{\raisebox{0pt}{$\stackrel{\;\widetilde{f}}{\searrow}$}} 
    &{\scriptstyle\pi_{\widetilde Y}}\Big\downarrow
              \phantom{\scriptstyle\pi_{\widetilde Y}}
    &\smash{\raisebox{0pt}{$\stackrel{\;\widetilde{g}}{\searrow}$}} 
  \\[10pt]
  X &\stackrel{f}{\dashrightarrow} & Y &\stackrel{g}{\dashrightarrow} & Z \\
\end{array}
\]
\caption{Resolution of the maps~$f$,~$g$, and~$g\circ f$}
\label{figure:blowup}
\end{figure} 

We note that~\eqref{eqn:image2} implies that
\[
  g^*D = \text{Zariski closure of $\left(g|_{Y\setminus I_g}\right)^*D$ in~$Y$,} 
\]
and similarly~\eqref{eqn:image1} implies that
\[
  (g\circ f)^*D 
   = \text{Zariski closure of 
    $\left(g\circ f|_{X \setminus (I_f \cup (f_{X\setminus I_f})^{-1}(I_g))}\right)^*D$
        in $Y$.}
\]
The divisors $(g\circ f)^*D$ and $f^*(g^*D)$ agree on 
$X \setminus (I_f \cup (f_{X\setminus I_f})^{-1}(I_g))$, so we see that
$f^*(g^*D)-(g\circ f)^*D$ is effective, which
completes the proof of Proposition~\ref{proposition:fgDminusgfD}.
\end{proof}

We now give the proof of Theorem~\ref{thm:AfmleCAfm}.

\begin{proof}[Proof of Theorem~$\ref{thm:AfmleCAfm}$]
We set the following notation.
\begin{notation}
\item[$N$]
the dimension of $X$, which we assume is at least~$2$.
\item[$\Amp(X)$]
the ample cone in $\NS(X)_\RR$ of all ample $\RR$-divisors.
\item[$\Nef(X)$]
the nef cone in $\NS(X)_\RR$ of all nef $\RR$-divisors.
\item[$\Eff(X)$]
the effective cone in $\NS(X)_\RR$ of all effective $\RR$-divisors.
\item[$\overline{\Eff}(X)$]
the pseudoeffective cone, i.e., the $\RR$-closure of~$\Eff(X)$.
\end{notation}
As explained in~\cite[Section~1.4]{MR1644323}, we have
\[
  \Nef(X)=\overline{\Amp(X)}
  \quad\text{and}\quad
  \Amp(X)=\interior\bigl(\Nef(X)\bigr).
\]
In particular,~$\Nef(X)$ is a closed convex cone.
Also, since~$\Amp(X)\subset\Eff(X)$, it follows that
$\Nef(X)\subset\overline{\Eff}(X)$.

\begin{lemma}
\label{lemma:DEffXHAmp} 
With notation as above,
let $D\in\overline{\Eff}(X)\setminus\{0\}$ and $H\in\Amp(X)$. 
Then $D\cdot H^{N-1}>0$.
\end{lemma}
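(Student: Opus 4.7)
I would prove the strict positivity $D \cdot H^{N-1} > 0$ by induction on $N = \dim X$, with the surface case handled by the Hodge index theorem and the inductive step using Bertini together with the Lefschetz hyperplane theorem (as foreshadowed by Remark~\ref{remark:singvar}) to restrict to a smooth hyperplane section. Before starting the induction, I would verify the non-strict inequality $D \cdot H^{N-1} \geq 0$: for any prime divisor $V \subset X$, the projection formula gives $V \cdot H^{N-1} = (H|_V)^{N-1} > 0$ by Nakai--Moishezon applied to the ample restriction $H|_V$ on $V$, so non-negativity holds throughout $\Eff(X)$ by taking positive combinations and extends to $\overline{\Eff}(X)$ by continuity of the intersection pairing. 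It remains to rule out $D \cdot H^{N-1} = 0$ when $D \neq 0$.

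For the base case $N = 2$ I would argue by contradiction. Suppose $D \in \overline{\Eff}(X) \setminus \{0\}$ satisfies $D \cdot H = 0$. For any $E \in \NS(X)_\RR$, the divisor $H + tE$ lies in the open cone $\Amp(X)$ for $|t|$ small, so the inequality just established gives $0 \leq D \cdot (H + tE) = t(D \cdot E)$ for $t$ of both signs near zero, forcing $D \cdot E = 0$. Since $E$ was arbitrary, $D$ lies in the kernel of the intersection pairing on $\NS(X)_\RR$. This pairing is non-degenerate by the Hodge index theorem on a surface, contradicting $D \neq 0$.

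For the inductive step $N \geq 3$, I would replace $H$ by a large multiple so that it is very ample and use Bertini to choose a smooth irreducible $Y \in |H|$, so that $H|_Y$ is ample on $Y$. The restriction map $r : \NS(X)_\RR \to \NS(Y)_\RR$ is linear and sends each prime divisor $V \subset X$ into $\Eff(Y)$---namely the effective cycle $V \cap Y$ when $V \neq Y$, and $H|_Y$ (which is ample, hence a positive multiple of an effective class) when $V = Y$. By continuity of $r$ and closedness of $\overline{\Eff}(Y)$, $r$ therefore carries $\overline{\Eff}(X)$ into $\overline{\Eff}(Y)$. The Lefschetz hyperplane theorem makes $r$ injective once $\dim Y \geq 2$, which holds in our range, so $D|_Y \neq 0$ and lies in $\overline{\Eff}(Y) \setminus \{0\}$. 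Applying the induction hypothesis to $(Y, D|_Y, H|_Y)$ gives $(D|_Y) \cdot (H|_Y)^{N-2} > 0$, and the projection formula identifies this intersection on $Y$ with $D \cdot H^{N-1}$ on $X$, completing the step.

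The main obstacle will be the clean verification that restriction preserves the pseudoeffective cone---especially handling of the $V = Y$ case---together with correctly invoking Lefschetz at the level of $\NS_\RR$ rather than of $\Pic$ in the borderline dimension $\dim Y = 2$. Both are standard once one has the right formulations, but they deserve explicit care; as noted in Remark~\ref{remark:singvar}, extending the argument to singular $X$ would further require a strengthened form of the Lefschetz hyperplane theorem for a general member of a very ample linear system.
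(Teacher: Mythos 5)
Your proposal is correct and follows essentially the same route as the paper: an induction on $N$ with base case $N=2$ settled by non-degeneracy of the intersection pairing (Hodge index theorem), and the inductive step passing to a smooth hyperplane section via the Lefschetz hyperplane theorem together with the projection formula. The only cosmetic difference is in the surface case, where the paper picks $E$ with $D\cdot E<0$ directly and deduces $D\cdot H>0$ from $D\cdot(kH+E)\ge0$, while you run a contradiction via a perturbation of $H$; both hinge on the same nondegeneracy fact.
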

\begin{proof}
Since~$H$ is ample and~$D$ is in the closure of the effective cone,
we certainly have~$D\cdot H^{N-1}\ge0$. Our goal is to prove that
we have a strict inequality.
\par
We first consider the case $N=2$. Since $D\ne0$ in~$\NS(X)_\RR$, there
is a divisor~$E$ such that~$D\cdot E\ne0$. Replacing~$E$ by~$-E$ if
necessary, we may assume that $D\cdot E<0$. Choose~$k>0$ sufficiently
large so that $kH+E$ is ample. Since~$D$ is a limit of effective
divisors, we have $D\cdot(kH+E)\ge0$. Hence
\[
  D\cdot H = \frac{-D\cdot E}{k} > 0.
\]
\par
We now proceed by induction on~$N$. Let $N=\dim X\ge3$. Replacing~$H$
with~$kH$ for an appropriate~$k\ge1$, we may assume that~$H$
is very ample. Let~$Y$ be a (smooth) irreducible variety
in the linear system~$|H|$. The Lefschetz hyperplane 
theorem~\cite[Theorem~1.23]{MR1997577}  says that the restriction map
$\NS(X)\to\NS(Y)$ is injective and preserves effective divisors. 
Our induction hypothesis says that
\[
  D|_Y \cdot (H|_Y)^{N-2} > 0.
\]
Hence $D\cdot Y\cdot H^{N-2}>0$. But $Y\sim H$ in~$\Pic(X)$, so in
particular $Y\equiv H$ in~$\NS(X)_\RR$. Hence
\text{$D\cdot{H^{N-1}}>0$}, which completes the proof of
Lemma~\ref{lemma:DEffXHAmp}.
\end{proof}

\begin{lemma}
\label{lemma:absvleCvHN}
Let~$H\in\Amp(X)$, and fix some norm~$|\,\cdot\,|$ on the $\RR$-vector
space~$\NS(X)_\RR$. There are constants~$C_1,C_2>0$ such that
\begin{equation}
  \label{eqn:vleCvHN1}
  C_1 |v| \le  v\cdot H^{N-1} \le C_2 |v|
  \quad\text{for all $v\in\overline{\Eff}(X)$.}
\end{equation}
In particular, the inequality~\eqref{eqn:vleCvHN1} holds for
all $v\in\Nef(X)$.
\end{lemma}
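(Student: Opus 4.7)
The plan is to split the two inequalities and treat them separately. The upper bound requires nothing about the effective cone: the map $v\mapsto v\cdot H^{N-1}$ is a linear functional on the finite-dimensional normed space $\NS(X)_\RR$, hence automatically continuous, so there is a constant $C_2>0$ with $v\cdot H^{N-1}\le C_2|v|$ for every $v\in\NS(X)_\RR$. This step uses no hypothesis on $v$.

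The substantive content is the lower bound, and the key input is the already-established Lemma~\ref{lemma:DEffXHAmp}, which says that $w\cdot H^{N-1}>0$ for every nonzero $w\in\overline{\Eff}(X)$. I will combine this with a standard compactness-plus-homogeneity argument. Let $S=\{w\in\NS(X)_\RR:|w|=1\}$ and set
\[
  K = S\cap\overline{\Eff}(X).
\]
Since $\overline{\Eff}(X)$ is closed (by definition as the $\RR$-closure of $\Eff(X)$) and $S$ is compact in the finite-dimensional space $\NS(X)_\RR$, the set $K$ is compact. The function $w\mapsto w\cdot H^{N-1}$ is continuous and, by Lemma~\ref{lemma:DEffXHAmp}, strictly positive on $K$, so it attains a positive minimum $C_1>0$ there.

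Now for arbitrary $v\in\overline{\Eff}(X)\setminus\{0\}$, the point $v/|v|$ lies in $K$ because $\overline{\Eff}(X)$ is a cone (being the closure of the cone $\Eff(X)$). Using bilinearity of the intersection product,
\[
  v\cdot H^{N-1} = |v|\,\bigl((v/|v|)\cdot H^{N-1}\bigr) \ge C_1|v|,
\]
and the inequality is trivial when $v=0$. This gives the lower bound on $\overline{\Eff}(X)$, and the final assertion is immediate from the already-noted inclusion $\Nef(X)\subset\overline{\Eff}(X)$.

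There is no real obstacle: once Lemma~\ref{lemma:DEffXHAmp} is in hand, the whole argument is just finite-dimensional linear algebra, with compactness of $K$ providing the uniform positive lower bound and conical homogeneity extending it from the unit sphere to all of $\overline{\Eff}(X)$.
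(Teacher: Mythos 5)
Your proof is correct and takes essentially the same route as the paper: both use Lemma~\ref{lemma:DEffXHAmp} together with compactness of $\overline{\Eff}(X)\cap\{|w|=1\}$ and conical homogeneity to obtain the lower bound. The only cosmetic difference is that you obtain $C_2$ from general boundedness of a linear functional on a finite-dimensional space, while the paper extracts it as a supremum over the same compact set.
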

\begin{proof}
We consider the map
\[
  \f:\NS(X)_\RR\longrightarrow\RR,\quad
  \f(w) = w\cdot H^{N-1}.
\]
Since~$\f$ is continuous, it attains a minimum and (finite) maximum
when restricted to the compact set
\[
  \overline{\Eff}(X)\cap \bigl\{w\in\NS(X)_\RR : |w|=1\bigr\}.
\]
Lemma~\ref{lemma:DEffXHAmp} tells us that~$\f(w)>0$
for all nonzero~$w\in\overline{\Eff}(X)$, so the minimum is strictly
positive, say
\[
  C_1 = \inf \bigl\{\f(w) : w\in\overline{\Eff}(X)~\text{and}~|w|=1\bigr\} > 0.
\]
Then for all $v\in\overline{\Eff}(X)\setminus\{0\}$ we have
\[
  v\cdot H^{n-1}
  =  |v| \f\left( \frac{v}{|v|} \right)
  \ge C_1 |v|.
\]
Similarly, letting
\[
  C_2 = \sup \bigl\{\f(w) : w\in\overline{\Eff}(X)~\text{and}~|w|=1\bigr\} 
  < \infty,
\]
we have
\[
  v\cdot H^{n-1}
  =  |v| \f\left( \frac{v}{|v|} \right)
  \le C_2 |v|.
\]
This proves the first part of Lemma~\ref{lemma:absvleCvHN}, and the
last assertion is then clear, since as noted earlier,
$\Nef(X)\subseteq\overline{\Eff}(X)$.
\end{proof}

We resume the proof of Theorem~\ref{thm:AfmleCAfm}.  As in the
proof of Lemma~\ref{lemma:absvleCvHN}, we fix a norm~$|\,\cdot\,|$ on
the $\RR$-vector space~$\NS(X)_\RR$, and for any linear map
$A:\NS(X)_\RR\to\NS(X)_\RR$, we set
\[
  \|A\|' = \sup_{v\in\Nef(X)\setminus0} \frac{|Av|}{|v|}.
\]
We note that for linear maps $A,B\in\End\bigl(\NS(X)_\RR\bigr)$ and
$c\in\RR$ we have
\[
  \|A+B\|'\le\|A\|'+\|B\|'
  \quad\text{and}\quad
  \|cA\|' = |c|\,\|A\|'.
\]
Further, since~$\Nef(X)$ generates~$\NS(X)_\RR$ as an $\RR$-vector
space, we have~$\|A\|'=0$ if and only if $A=0$. Thus~$\|\,\cdot\,\|'$
is an $\RR$-norm on~$\End\bigl(\NS(X)_\RR\bigr)$.
\par
Similarly, for any linear map $A:\NS(X)_\RR\to\NS(X)_\RR$, we set
\[
  \|A\|'' = \sup_{w\in\overline{\Eff}(X)\setminus0} \frac{|Aw|}{|w|},
\]
then $\|\,\cdot\,\|''$ is an $\RR$-norm
on~$\End\bigl(\NS(X)_\RR\bigr)$. 
\par
The maps~$(f^m)^*$ for $m\ge1$ preserve $\overline{\Eff}(X)$. 
This allows us to compute
\begin{align*}
  \bigl\|&(f^{m+n})^*\bigr\|' \\
  &= \sup_{v\in\Nef(X)\setminus0} \frac{\bigl|(f^{m+n})^*v\bigr|}{|v|} \\
  &\le C_1^{-1} \sup_{v\in\Nef(X)\setminus0} \frac{(f^{m+n})^*v\cdot H^{N-1}}{|v|} 
    &&\text{from Lemma \ref{lemma:absvleCvHN},} \\
  &\le C_1^{-1} \sup_{v\in\Nef(X)\setminus0} 
         \frac{(f^m)^*((f^{n})^*v)\cdot H^{N-1}}{|v|} 
    &&\text{from Proposition~\ref{proposition:fgDminusgfD},} \\
  &=  C_1^{-1} \sup_{v\in\Nef(X)\setminus0} \left(
         \frac{(f^m)^*((f^{n})^*v)\cdot H^{N-1}}{|(f^n)^*v|} 
         \cdot \frac{|(f^n)^*v|}{|v|} \right) \hidewidth\\
  &\le  C_1^{-1} \left( \sup_{v\in\Nef(X)\setminus0} 
         \frac{(f^m)^*((f^{n})^*v)\cdot H^{N-1}}{|(f^n)^*v|} \right)
         \cdot \left( \sup_{v\in\Nef(X)\setminus0} \frac{|(f^n)^*v|}{|v|} \right) 
         \hidewidth\\
  &=  C_1^{-1} \left( \sup_{v\in\Nef(X)\setminus0} 
         \frac{(f^m)^*((f^{n})^*v)\cdot H^{N-1}}{|(f^n)^*v|} \right)
          \cdot \bigl\|(f^n)^*\bigr\|' 
          \hidewidth\\
  &\le 
     \smash[b]{
     C_1^{-1} \left( \sup_{w\in\overline{\Eff}(X)\setminus0} 
         \frac{(f^m)^*w\cdot H^{N-1}}{|w|} \right)
          \cdot \bigl\|(f^n)^*\bigr\|' 
     }
     \hidewidth \\
  &&&\text{since $\Nef(X)\subseteq\overline{\Eff}(X)$,}  \\
  &\le C_1^{-1}C_2 \left( \sup_{w\in\overline{\Eff}(X)\setminus0} 
         \frac{|(f^m)^*w|}{|w|} \right)
          \cdot \bigl\|(f^n)^*\bigr\|'
    &&\text{from Lemma \ref{lemma:absvleCvHN},} \\
  &\le C_1^{-1}C_2 \bigl\|(f^m)^*\bigr\|'' \cdot \bigl\|(f^n)^*\bigr\|'.
\end{align*}
We recall that we have defined~$\|\,\cdot\,\|$ to be the sup norm
on~$M_r(\RR)=\End\bigl(\NS(X)_\RR\bigr)$, where the identification is
via the given basis~$D_1,\ldots,D_r$ of~$\NS(X)_\RR$. We thus have
three norms~$\|\,\cdot\,\|$,~$\|\,\cdot\,\|'$, and~$\|\,\cdot\,\|''$
on $\End\bigl(\NS(X)_\RR\bigr)$, so there are positive
constants~$C_3'$,~$C_4'$,~$C_3''$ and~$C_4''$ such that
\[
  C'_3\|\g\| \le \|\g\|' \le C'_4\|\g\|
  \quad\text{and}\quad
  C''_3\|\g\| \le \|\g\|'' \le C''_4\|\g\|
\]
for all $\g\in\End\bigl(\NS(X)_\RR\bigr)$. Hence
\begin{align*}
  \bigl\|A(f^{n+m})\bigr\|
  = \bigl\|(f^{n+m})^*\bigr\|
  &\le {C_3'}^{-1} \bigl\|(f^{n+m})^*\bigr\|' \\
  &\le {C_3'}^{-1}C_1^{-1}C_2 \|(f^n)^*\|'\cdot \bigl\|(f^m)^*\bigr\|'' \\
  &\le {C_3'}^{-1}C_1^{-1}C_2C_4'C_4'' \|(f^n)^*\|\cdot \bigl\|(f^m)^*\bigr\| \\
  &=  {C_3'}^{-1}C_1^{-1}C_2C_4'C_4'' \|A(f^n)\|\cdot \bigl\|A(f^m)\bigr\|.
\end{align*}
This completes the proof of~\eqref{eqn:AfmnleCAfmAfn}.
\par
A similar calculation gives
\begin{align*}
  \bigl\|(f^m)^*\bigr\|'
  &= \sup_{v\in\Nef(X)\setminus0} \frac{\bigl|(f^m)^*v\bigr|}{|v|} \\
  &\le C_1^{-1} \sup_{v\in\Nef(X)\setminus0} \frac{(f^m)^*v\cdot H^{N-1}}{|v|} 
    &&\text{from Lemma \ref{lemma:absvleCvHN},} \\
  &\le C_1^{-1} \sup_{v\in\Nef(X)\setminus0} \frac{(f^*)^m v\cdot H^{N-1}}{|v|} 
    &&\text{from Proposition~\ref{proposition:fgDminusgfD},} \\
  &\le C_1^{-1}C_2 \sup_{v\in\Nef(X)\setminus0} \frac{\bigl|(f^*)^m v\bigr|}{|v|} 
    &&\text{from Lemma \ref{lemma:absvleCvHN},} \\
  &=  C_1^{-1}C_2 \bigl\|(f^*)^m\bigr\|'.
\end{align*}
Hence
\begin{align*}
  \bigl\|A(f^m)\bigr\|
  &= \bigl\|(f^m)^*\bigr\|
  \le {C_3'}^{-1} \bigl\|(f^m)^*\bigr\|'
  \le {C_3'}^{-1}C_1^{-1}C_2 \bigl\|(f^*)^m\bigr\|' \\
  &\le {C_3'}^{-1}C_1^{-1}C_2C_4' \bigl\|(f^*)^m\bigr\|
  = {C_3'}^{-1}C_1^{-1}C_2C_4' \bigl\|A(f)^m\|.
\end{align*}
This completes the proof of~\eqref{eqn:AfmleCAfm},
and with it the proof of Theorem~\ref{thm:AfmleCAfm}.
\end{proof}

\begin{remark}
If we assume that $f:X\to X$ is a morphism, then the conclusions of
Theorem~\ref{thm:AfmleCAfm} are valid for normal varieties~$X$.
Indeed, in this situation it suffices to work with~$\Nef(X)$; there
is no need to introduce $\overline{\Eff}(X)$ into the argument.
\end{remark}

\section{A height inequality for rational maps}
\label{section:htinequality}

Let $f:X\dashrightarrow X$ be a rational map and $D$ a divisor
on~$X$. Our goal in this section is to prove an arithmetic inequality
relating the height functions~$h_D\circ f$ and $h_{f^*D}$.  For
rational self-maps $f:\PP^N\dashrightarrow\PP^N$ of projective space,
the desired result follows by an elementary triangle inequality
argument~\cite[Theorem~B.2.5(a)]{hindrysilverman:diophantinegeometry},
but the proof for general varieties $f:X\dashrightarrow X$ is more
complicated because the pullback of an ample divisor by~$f$ need not
be ample.  With an eye towards future applications, and since the
argument is no more difficult, we prove a stronger result in which the
domain and range may be different varieties.  We again refer the
reader to~\cite{bombierigubler, hindrysilverman:diophantinegeometry,
  lang:diophantinegeometry, MR2316407, MR2514094} for the theory of
height functions and Weil's height machine.  In
Section~\ref{section:htinequalityaltpf} we will give an alternative
proof of Proposition~\ref{proposition:hDflehfD} that avoids blowups.

\begin{proposition}
\label{proposition:hDflehfD}
Let $X/\Kbar$ and $Y/\Kbar$ be smooth projective varieties, let
$f:Y\dashrightarrow X$ be a dominant rational map defined
over~$\Kbar$, let $D\in\Div(X)$ be an ample divisor, and
fix Weil height functions~$h_{X,D}$ and $h_{Y,f^*D}$ associated
to~$D$ and~$f^*D$.  Then
\[
  h_{X,D}\circ f(P) \le h_{Y,f^*D}(P) + O(1)
  \quad\text{for all~$P\in (Y\setminus I_f)(\Kbar)$,}
\]
where the~$O(1)$ bound depends on~$X$,~$Y$,~$f$, and the choice of height
functions, but is independent of~$P$.
\end{proposition}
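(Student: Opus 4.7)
The plan is to resolve the indeterminacy locus~$I_f$, compare the two natural pullbacks of~$D$ on the resolution, and invoke functoriality of the height machine for honest morphisms.  Pick a birational morphism $\pi:\tilde Y \to Y$ with~$\tilde Y$ smooth projective, such that $\tilde f := f\circ\pi : \tilde Y \to X$ is a morphism (for instance, by repeatedly blowing up the ideal sheaf of~$I_f$), and let $\tilde E = \pi^{-1}(I_f)$ with irreducible decomposition $\tilde E = \tilde E_1 \cup \cdots \cup \tilde E_r$.  The divisors $\pi^* f^*D$ and $\tilde f^*D$ on~$\tilde Y$ agree on $\tilde Y \setminus \tilde E$ (where $\pi$ is an isomorphism and $\tilde f = f\circ\pi$), so their difference is supported in~$\tilde E$, and everything hinges on proving that this difference is effective.

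Next I would reduce to a convenient representative of~$D$.  Because Weil heights satisfy $h_{X,nD}=n\,h_{X,D}+O(1)$ and are invariant up to~$O(1)$ under linear equivalence, I may first replace~$D$ by a large multiple so that~$D$ is very ample, and then by a general effective member of~$|D|$ chosen by Bertini so that $\Supp(D)$ contains none of the finitely many subvarieties $\tilde f(\tilde E_i) \subset X$.  With this choice, $\tilde f^*D$ is effective and has no component supported on any~$\tilde E_i$, i.e.\ it is the strict transform under~$\pi$ of its pushforward $\pi_*(\tilde f^*D)$.  Since $\pi_*(\tilde f^*D)$ agrees with~$f^*D$ on $Y\setminus I_f$, and both are the Zariski closures of the same divisor defined there, they coincide on all of~$Y$.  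The standard identity relating the total transform to the strict transform then yields
\[
  \pi^* f^*D \;=\; \tilde f^*D + F
\]
for some effective divisor~$F$ supported on~$\tilde E$.

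Now for any $P\in (Y\setminus I_f)(\Kbar)$, write $\tilde P = \pi^{-1}(P) \in \tilde Y \setminus \tilde E$.  Applying functoriality of the height machine to the two morphisms~$\tilde f$ and~$\pi$ gives
\begin{align*}
  h_{X,D}\bigl(f(P)\bigr)
  &= h_{X,D}\bigl(\tilde f(\tilde P)\bigr)
   = h_{\tilde Y,\,\tilde f^*D}(\tilde P) + O(1) \\
  &= h_{\tilde Y,\,\pi^*f^*D}(\tilde P) - h_{\tilde Y,\,F}(\tilde P) + O(1) \\
  &\le h_{\tilde Y,\,\pi^*f^*D}(\tilde P) + O(1)
   \;=\; h_{Y,\,f^*D}(P) + O(1),
\end{align*}
where the inequality uses the standard lower bound $h_{\tilde Y,F} \ge -O(1)$ on the complement of~$\Supp(F)$ for an effective divisor, applied to $\tilde P \notin \Supp(F) \subset \tilde E$.

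The main obstacle is establishing the effectivity of~$F$, which is precisely what the Bertini-type avoidance of the subvarieties~$\tilde f(\tilde E_i)$ secures: without that condition, $\tilde f^*D$ could acquire exceptional components with coefficients exceeding those in~$\pi^*f^*D$, making $F = \pi^*f^*D - \tilde f^*D$ non-effective and collapsing the height comparison.  Once the geometric claim is in hand, the arithmetic content reduces to the standard functorial properties of Weil's height machine.
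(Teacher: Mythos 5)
Your proof is correct, and it takes a genuinely different route to the key geometric fact (effectivity of the exceptional part) than the paper does. The paper sets $B = p^*p_*(g^*D) - g^*D$ on the resolution and proves $B$ is effective for any nef $D$ by observing that $-B$ is $p$-nef and then invoking the negativity lemma of Koll\'ar--Mori (\cite[Lemma~3.39]{MR1658959}), from which effectivity follows because $p_*B=0$. You instead move~$D$ within its very ample linear system by a Bertini argument so that the pullback $\tilde f^*D$ acquires no exceptional components, identify $\tilde f^*D$ as the strict transform of $f^*D=\pi_*(\tilde f^*D)$, and then use the elementary identity that the total transform of an effective divisor under a birational morphism of smooth varieties equals the strict transform plus an effective exceptional divisor. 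Both proofs then close with the same standard height-machine inequalities ($h$ of an effective divisor is bounded below off its support, plus functoriality along morphisms). The trade-off is real but modest: your argument avoids the negativity lemma and is somewhat more elementary, relying only on the local ``order along an exceptional divisor is nonnegative'' computation; the paper's argument requires no general-position choice, so it establishes effectivity for an arbitrary nef, effective $D$ rather than for a well-chosen representative, which is what makes the paper's remark extending the proposition to nef divisors with an effective multiple immediate. For what it is worth, the paper also gives an entirely different second proof in Section~\ref{section:htinequalityaltpf} that avoids blow-ups altogether, working instead with global sections of $\Ocal_X(D)$ and explicit projective heights; your proof is not that one either.

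One minor point worth flagging, though it does not affect correctness: you need, and implicitly use, the observation that if $\tilde f(\tilde E_i)$ is dense in~$X$ (which can happen when $\dim Y > \dim X$), then $\tilde E_i$ is automatically not a component of $\tilde f^*D$ for \emph{any} choice of~$D$, so the Bertini avoidance is only needed for those $\tilde E_i$ whose image is a proper subvariety. You should also spell out that replacing $D$ by a linearly equivalent $D'$ changes $f^*D$ by a linear equivalence as well (since $\pi_*$ of a principal divisor is principal for birational $\pi$), so that both heights in the inequality move only by $O(1)$.
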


\begin{proof}
We blow up the indeterminacy locus~$I_f$ of~$f$ to get a smooth
projective variety~$Z$, a birational morphism~$p: Z \to Y$, and a
morphism~$g: Z \to X$ such that~$f = g \circ p^{-1}$. For any
effective divisor~$D$ on~$X$, the pullback~$f^*D$ is defined by
\[
  f^*D = p_* (g^* D). 
\]
We note that~$f^*D$ is independent of the choice of~$Z$. 

\begin{lemma}
\label{lemma:pgDgDeff}
With notation as above, assume that~$D$ is nef. Then the divisor~$p^*
p_*(g^* D) - g^* D$ is effective.
\end{lemma}

\begin{proof}
We set~$B= p^* p_*(g^* D) - g^* D$. For any curve~$C$ on~$Z$ such
that~$p(C)$ is a point, we have
\[
  - B \cdot C = (g^* D) \cdot C - (p^* p_*(g^* D))\cdot C = 
   (g^* D) \cdot C \geq 0
\]
Thus~$-B$ is~$p$-nef. It follows from the negativity lemma (see
\cite[Lem\-ma~3.39]{MR1658959}) that~$B$ is effective if and only
if~$p_* B$ is effective. Since~$p_* B = 0$, we conclude that~$B$ is
effective.
\end{proof}

We now resume the proof of Proposition~\ref{proposition:hDflehfD}, so
in particular we assume that~$D$ is ample.  For a sufficiently large
$m$, the divisor~$mD$ is very ample, so there exists an effective
divisor~$D'$ that is linearly equivalent to~$mD$. Since
$f^*D'$ is linearly equivalent to~$f^*(m D)$, we may assume that
$D$ is effective.

We set 
\[
  B= p^* p_*(g^* D) - g^* D.
\]
Lemma~\ref{lemma:pgDgDeff} tells us that~$B$ is an effective divisor
with the property that
\[
  p\bigl(\Supp(B)\bigr) \subset I_f.
\]
For any~$\tilde{P} \in Z(\Kbar) \setminus \Supp(B)$, we estimate
$h_{p^* p_*(g^* D)} (\tilde{P})$ in two ways. 
First we have 
\begin{align}
\label{eqn:first}
  h_{p^* p_*(g^* D)} (\tilde{P}) 
  & = h_{g^* D + B} (\tilde{P}) \notag \\
  & = h_{g^* D}(\tilde{P}) + h_B(\tilde{P}) + O(1) \notag\\
  & \geq h_{g^* D}(\tilde{P}) +O(1), 
\end{align}
where the last inequality follows from the positivity of the height
$h_B$ on~$Z \setminus \Supp(B)$ for the effective divisor~$B$;
see~\cite[Theorem~B.3.2(e)]{hindrysilverman:diophantinegeometry}.
Secondly, we have
\begin{align}
\label{eqn:second}
  h_{p^* p_*(g^* D)} (\tilde{P}) 
  & = h_{p_*(g^* D)}(p(\tilde{P})) + O(1) \notag\\
  & = h_{f^* D}(p(\tilde{P})) +O(1). 
\end{align}
\par
Now let~$P\in Y(\Kbar) \setminus I_f$. Then there exists a unique
point~$\tilde{P} \in Z \setminus p^{-1}(I_f)$ with~$p(\tilde{P}) =
P$. Since~$\Supp(B) \subseteq p^{-1}(I_f)$, we have~$P \in Z \setminus
\Supp(B)$.  Hence
\begin{align*}
  h_{f^* D}(P) 
  &= h_{f^* D}(p(\tilde P)) 
    &&\text{since~$P=p(\tilde P)$,} \\
  & = h_{p^* p_*(g^* D)} (\tilde{P}) + O(1)
    &&\text{from \eqref{eqn:second},}\\
  & \geq h_{g^* D}(\tilde{P}) +O(1) 
    &&\text{from \eqref{eqn:first},}\\
  & = h_{D}(g(\tilde{P})) +O(1) 
    &&\text{since~$g$ is a morphism,} \\
 & = h_{D}(f(P)) + O(1)
    &&\text{since~$g(\tilde{P})=f(P)$.}
\end{align*}
This completes the proof of Proposition~\ref{proposition:hDflehfD}.
\end{proof} 

\begin{remark}
Proposition~\ref{proposition:hDflehfD} is true more generally for a
nef divisor~$D$ such that there exists an~$m \geq 1$ such that~$m D$
is linearly equivalent to an effective divisor.
\end{remark}

\section{A bound for the height of an iterate}
\label{section:afpledf}

We now prove the quantitative height upper bound for
$\hplus_X\bigl(f^n(P)\bigr)$ that constitutes one of the main results
of this paper. For the convenience of the reader, the statement
includes a reminder of the notation that we set in the introduction.

\begin{theorem}
\label{theorem:hXfnledfnhX}
\textup{(Theorem~\ref{theorem:hXfnlldfenhX})}
Let~$K$ be a global field, let $f:X\dashrightarrow X$ be a dominant
rational map defined over~$K$, let~$h_X$ be a Weil height
on~$X(\Kbar)$ relative to an ample divisor,
let~$\hplus_X=\max\{h_X,1\}$, and let~$\e>0$.  Then there is a
constant $C=C(X,h_X,f,\e)$ such that for all $P\in X_f(\Kbar)$ and all
$n\ge0$,
\[
  \hplus_X\bigl(f^n(P)\bigr) \le C\cdot(\d_f+\e)^n\cdot\hplus_X(P).
\]
\end{theorem}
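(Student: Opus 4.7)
The strategy is to combine the pullback inequality $h_D\circ f\le h_{f^*D}+O(1)$ of Proposition~\ref{proposition:hDflehfD} with the spectral estimates on $\|A(f^n)\|$ furnished by Theorem~\ref{thm:AfmleCAfm}, and then extract the growth rate $\d_f$ via a Fekete-style argument. First I would fix an ample divisor $H$ with $h_X=h_H+O(1)$ and choose a basis $H_1,\ldots,H_r$ of $\NS(X)_\RR$ consisting of very ample divisors; this is possible because $\Amp(X)$ is a nonempty open convex cone in $\NS(X)_\RR$. Applying Proposition~\ref{proposition:hDflehfD} to the dominant rational self-map $f^n$ gives
\[
  h_H(f^n(P))\le h_{(f^n)^*H}(P)+O(1).
\]
Writing $(f^n)^*H\equiv \sum_{i=1}^r a_i^{(n)}H_i$ in $\NS(X)_\RR$, the assignment $[D]\mapsto(a_1,\ldots,a_r)$ is a fixed linear map, so $|a_i^{(n)}|\le c\,\|A(f^n)\|$ with $c$ independent of $n$. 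Standard functoriality of Weil heights (linearity in $\Pic(X)_\RR$ modulo $O(1)$, together with the bound $h_D-h_{D'}=O\bigl(\sqrt{\hplus_X}\bigr)$ for algebraically equivalent $D,D'$) then yields
\[
  h_{(f^n)^*H}(P)\le \sum_{i=1}^r |a_i^{(n)}|\,\hplus_{H_i}(P)+O\bigl(\sqrt{\hplus_X(P)}\bigr).
\]

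Second, I would show $\lim_{n\to\infty}\|A(f^n)\|^{1/n}=\d_f$, from which it follows that $\|A(f^n)\|\le C'_\e(\d_f+\e)^n$ for all $n\ge0$. The submultiplicativity estimate \eqref{eqn:AfmnleCAfmAfn} makes $\log(C\|A(f^n)\|)$ subadditive, so by Fekete's lemma the limit $L:=\lim_n\|A(f^n)\|^{1/n}$ exists. To identify $L$ with $\d_f$, apply Theorem~\ref{thm:AfmleCAfm} to the iterate $g=f^k$ and combine with Lemma~\ref{lemma:rhoAAn1n}: one has $\|A(f^{km})\|=\|A(g^m)\|\le C_k\|A(g)^m\|\le C_k c_2 m^r\rho(A(f^k))^m$, and letting $m\to\infty$ gives $L\le \rho(A(f^k))^{1/k}$; then $k\to\infty$ yields $L\le \d_f$, while $L\ge \d_f$ is immediate from $\rho(A(f^n))\le\|A(f^n)\|$.

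Finally, since each $H_i$ is ample, $\hplus_{H_i}(P)\ll \hplus_X(P)$, and assembling the steps gives
\[
  h_X(f^n(P))\le C_\e(\d_f+\e)^n\hplus_X(P)+O\bigl(\sqrt{\hplus_X(P)}\bigr);
\]
the theorem follows on enlarging $C_\e$ and using $\sqrt{\hplus_X(P)}\le\hplus_X(P)$. The main obstacle is ensuring that the $O\bigl(\sqrt{\hplus_X(P)}\bigr)$ error from the algebraic-equivalence step is uniform in $n$: its implicit constant depends on the $\Pic^0(X)_\RR$-part $(f^n)^*H-\sum_i a_i^{(n)}H_i$, so one must verify that this part grows at most like $(\d_f+\e)^n$. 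This is handled by an auxiliary analysis of the action of $f^*$ on $\Pic^0(X)_\RR$ (controlled via the induced morphism on the Picard variety), together with the standard bound that heights of algebraically trivial divisors are $O\bigl(\sqrt{\hplus_X}\bigr)$ with a constant linear in the divisor class in any fixed norm on $\Pic^0(X)_\RR$.
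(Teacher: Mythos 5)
Your high-level ingredients overlap with the paper's (Proposition~\ref{proposition:hDflehfD}, the algebraic-equivalence estimate of Lemma~\ref{lemma:algeqht}, the spectral bounds of Theorem~\ref{thm:AfmleCAfm} and Lemma~\ref{lemma:rhoAAn1n}), and your Fekete argument for $\lim_{n\to\infty}\|A(f^n)\|^{1/n}=\d_f$ is correct. But there is a genuine gap in the central step, where you apply Proposition~\ref{proposition:hDflehfD} and the algebraic-equivalence estimate directly to the iterate $f^n$: both error constants depend on the rational map in question, hence on $n$, and nothing you wrote bounds how fast they grow with $n$. You flag part of this — the $O\bigl(\sqrt{\hplus_X}\bigr)$ error whose constant depends on the $\Pic^0$-class $(f^n)^*H-\sum_ia_i^{(n)}H_i$ — but (a) the $O(1)$ coming from Proposition~\ref{proposition:hDflehfD} applied to $f^n$ is also $n$-dependent, and (b) the proposed $\Pic^0$-analysis is only gestured at. Making it precise would require showing $(f^n)^*=(f^*)^n$ on $\Pic^0(X)_\RR$ for rational maps, proving a spectral bound such as $\rho\bigl(f^*|_{\Pic^0(X)_\RR}\bigr)\le\d_f+\e$, and justifying the ``constant linear in the divisor class'' strengthening of Lemma~\ref{lemma:algeqht}; none of this is routine, and none of it is carried out.

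The paper sidesteps all of these uniformity problems by a different mechanism, and that mechanism is the essential idea missing from your sketch. Rather than estimating $h_X\circ f^n$ directly, the paper fixes a single iterate $g=f^{m\ell}$ and derives the single inequality $h_X\bigl(g(Q)\bigr)\le a\,h_X(Q)+c\sqrt{h_X(Q)}$ with $a=r\|A(g)\|$; since $g$ is a fixed map, the constants here do not vary with the iterate. It then iterates this one inequality via Lemma~\ref{lemma:gShS0inf} to obtain $h_X\bigl(g^n(Q)\bigr)\le\bigl(C_5 r\|A(g)\|\bigr)^n h_X(Q)$, and finally controls $\|A(g)\|$ using Theorem~\ref{thm:AfmleCAfm} and Lemma~\ref{lemma:rhoAAn1n}: $\|A(g)\|\le C m^r\rho\bigl(A(f^\ell)\bigr)^m$ with $\ell$ chosen so $\rho\bigl(A(f^\ell)\bigr)^{1/\ell}\le\d_f+\e$. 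Because $\|A(g)\|\approx(\d_f)^{m\ell}$ is already exponentially large, the multiplicative overhead $C_5 r C m^r$ is absorbed after taking $(m\ell)$-th roots once $m$ is chosen sufficiently large. This ``iterate a fixed high iterate'' device is what keeps every error constant independent of the number of iterations, and it is what your proposal needs but does not supply.
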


Before proving Theorem~\ref{theorem:hXfnledfnhX}, we pause to show how
it immediately implies the fundamental inequality $\aupper_f(P)\le
\d_f$ stated in the introduction.

\begin{corollary}
\label{corollary:aupperfPledf}
\textup{(Theorem~\ref{theorem:afPledfintro})}
Let~$P\in X_f(\Kbar)$. Then
\begin{equation}
  \label{eqn:afPledf3} 
  \aupper_f(P)\le \d_f.
\end{equation}
\end{corollary}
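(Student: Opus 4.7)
The plan is to derive Corollary \ref{corollary:aupperfPledf} as an immediate consequence of Theorem \ref{theorem:hXfnledfnhX}, which has just been stated (and whose proof will occupy the bulk of this section). So the corollary is genuinely a one-line deduction, and the strategy is simply to take $n$-th roots of the height inequality and send $n \to \infty$.

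More precisely, fix $\e > 0$ and let $P \in X_f(\Kbar)$. Theorem \ref{theorem:hXfnledfnhX} produces a constant $C = C(X, h_X, f, \e) > 0$, independent of $P$ and $n$, such that
\[
  \hplus_X\bigl(f^n(P)\bigr) \le C \cdot (\d_f + \e)^n \cdot \hplus_X(P)
  \quad\text{for all $n \ge 0$.}
\]
Taking $n$-th roots gives
\[
  \hplus_X\bigl(f^n(P)\bigr)^{1/n}
  \le C^{1/n} \cdot (\d_f + \e) \cdot \hplus_X(P)^{1/n}.
\]
Since $C > 0$ and $\hplus_X(P) \ge 1$ are fixed (independent of $n$), both $C^{1/n}$ and $\hplus_X(P)^{1/n}$ tend to $1$ as $n \to \infty$. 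Taking the limsup therefore yields
\[
  \aupper_f(P)
  = \limsup_{n \to \infty} \hplus_X\bigl(f^n(P)\bigr)^{1/n}
  \le \d_f + \e.
\]
As this inequality holds for every $\e > 0$, we conclude $\aupper_f(P) \le \d_f$, which is \eqref{eqn:afPledf3}.

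There is no real obstacle here: all the work is hidden inside Theorem \ref{theorem:hXfnledfnhX}, whose proof (via Theorem \ref{thm:AfmleCAfm} on the submultiplicativity of the pullback action on $\NS(X)_\RR$ and Proposition \ref{proposition:hDflehfD} comparing $h_D \circ f$ with $h_{f^*D}$) is the substantive content. The only point worth flagging is that we must use $\hplus_X$ rather than $h_X$ so that $\hplus_X(P)^{1/n} \to 1$ even when $h_X(P) = 0$, and that the uniformity of $C$ in $P$ is what lets us apply the bound simultaneously at every iterate $f^n(P)$ without having the constant drift with $n$.
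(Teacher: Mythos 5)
Your proof is correct and is essentially identical to the paper's argument: apply Theorem~\ref{theorem:hXfnledfnhX}, take $n$-th roots and limsup (noting $C^{1/n}\to1$ and $\hplus_X(P)^{1/n}\to1$), obtain $\aupper_f(P)\le\d_f+\e$, and let $\e\to0$. The remarks about $\hplus_X\ge1$ and the uniformity of $C$ are accurate but just make explicit what the paper leaves implicit.
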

\begin{proof}
Let $\e>0$. Then
\begin{align*}
  \aupper_f(P)
  &= \limsup_{n\to\infty} \hplus_X\bigl(f^n(P)\bigr)^{1/n}
    \quad\text{definition of $\aupper_f(P)$,} \\*
  &\le \limsup_{n\to\infty} \bigl( C\cdot (\d_f+\e)^n\cdot \hplus_X(P) \bigr)^{1/n}
     \quad\text{from Theorem~\ref{theorem:hXfnledfnhX},} \\*
  &= \d_f+\e.
\end{align*}
This holds for all $\e>0$, which proves that $\aupper_f(P)\le \d_f$.
\end{proof}

\begin{proof}[Proof of Theorem~$\ref{theorem:hXfnledfnhX}$]
If~$P$ is preperiodic, then~$\aupper_f(P)=1\le\d_f$, so there is
nothing to prove. We assume henceforth that~$\#\Orbit_f(P)=\infty$.
We let~$m$ and~$\ell$ be positive integers to be chosen later, and we
set
\[
  g = f^{m\ell}.
\]
We note that $X_f(\Kbar)\subset X_g(\Kbar)$.  We choose ample
divisors~$D_1,\ldots,D_r\in\Div(X)$ whose algebraic equivalence
classes form a basis for~$\NS(X)_\QQ$, and we fix height
functions~$h_{D_1},\ldots,h_{D_r}$ associated to the
divisors $D_1,\ldots,D_r$.  We note that any two ample heights are
commensurate with one another, i.e., $h_X\asymp h_X'$, so we may
take~$h_X$ to be
\[
   h_X(Q) = \max_{1\le i\le r} h_{D_i}(Q).
\]
To ease notation, we further assume that~$h_{D_1}$ is chosen to
satisfy~$h_{D_1}\ge1$, so $\hplus_X=h_X$.
\par
Applying~$g^*$ to the divisors in our basis of~$\NS(X)_\QQ$, we have
algebraic equivalences
\begin{equation}
  \label{eqn:gDkeqsumj1raikDi}
  g^*D_k \equiv \sum_{i=1}^r a_{ik}(g)D_i
  \quad\text{for some $a_{ik}(g)\in\QQ$.}
\end{equation}
We set the notation
\[
  A(g) = \bigl(a_{ik}(g)\bigr)
  \quad\text{and}\quad
  \bigl\|A(g)\bigr\| = \max_{i,k} \bigl|a_{ik}(g)\bigr|.
\]
Algebraic equivalences of divisors as in~\eqref{eqn:gDkeqsumj1raikDi}
implies a height relation as in the following result.

\begin{lemma}
\label{lemma:algeqht}
Let $E\in\Div(X)_\RR$ be a divisor that is algebraically equivalent
to~$0$, and fix a height function $h_E$ associated to~$E$. Then there
is a constant~$C=C(h_X,h_E)$ such that
\begin{equation}
  \label{eqn:hEPleCDEhDP}
  \bigl|h_E(P)\bigr| \le C\sqrt{\hplus_X(P)}
  \quad\text{for all $P\in X(\Kbar)$.}
\end{equation}
\end{lemma}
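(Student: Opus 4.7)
The plan is to prove this classical bound via the Picard variety of $X$ and the Poincar\'e divisor, which is the standard route for controlling heights of divisors algebraically equivalent to zero. Since $E \equiv 0$ in $\NS(X)_\RR$, after clearing denominators (affecting~$h_E$ only by a bounded multiple and an $O(1)$ term, both absorbable into the final constant) we may assume $E$ is an integral divisor whose class lies in $\Pic^0(X)$. Because $X$ is smooth projective, $A := \Pic^0(X)$ is an abelian variety, and $E$ corresponds to a point $\tau \in A(\Kbar)$.

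Next I would invoke the Poincar\'e divisor $\Pcal$ on $X \times A$, characterized by the property that $\Pcal|_{X \times \{t\}}$ represents the class $t \in A$; in particular $\Pcal|_{X \times \{\tau\}} \sim E$. By the functoriality of Weil's height machine applied to the inclusion $\iota_\tau : X \to X\times A$, $P \mapsto (P,\tau)$, one gets
\[
  h_E(P) = h_\Pcal(P,\tau) + O(1).
\]
The essential step is to exploit the bilinear structure of the Poincar\'e height. Via the Albanese map $\phi : X \to A^\vee$ and the canonical duality pairing on $A \times A^\vee$ realized by $\Pcal$, one obtains
\[
  h_\Pcal(P,\tau) = \langle \phi(P),\tau\rangle + O(1),
\]
where $\langle\cdot,\cdot\rangle$ is a bilinear pairing (up to $O(1)$) expressible in terms of the N\'eron--Tate canonical heights on $A$ and $A^\vee$. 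Cauchy--Schwarz applied to this pairing yields
\[
  \bigl|\langle \phi(P),\tau\rangle\bigr|
  \le \hhat\bigl(\phi(P)\bigr)^{1/2}\,\hhat(\tau)^{1/2}.
\]
For fixed~$\tau$ the factor $\hhat(\tau)^{1/2}$ is a constant, while the quadratic N\'eron--Tate height $\hhat(\phi(P))$ on $A^\vee$ is comparable to a Weil height on $A^\vee$ associated to a symmetric ample divisor, which under pullback by $\phi$ is in turn dominated by a constant multiple of $\hplus_X(P)$. Chaining these estimates produces $|h_E(P)| \le C\sqrt{\hplus_X(P)} + O(1)$, and absorbing the $O(1)$ into a larger~$C$ (legitimate because $\hplus_X \ge 1$) gives the claim.

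The main obstacle here is not any single hard step but rather the amount of geometric machinery that must be invoked: the Picard variety, the Poincar\'e divisor, the Albanese map, and N\'eron--Tate heights on abelian varieties. The statement itself is entirely classical (see Hindry--Silverman, Theorem~B.5.9, or Lang, \emph{Fundamentals of Diophantine Geometry}, Ch.~5), so the cleanest route in a paper of this scope is simply to cite it rather than rederive the whole chain of constructions.
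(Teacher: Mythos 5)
Your sketch is the standard argument underlying the cited result, and you correctly observe at the end that the right move in a paper of this scope is simply to cite it. That is exactly what the paper does: its entire proof of this lemma is a one-line reference to Hindry--Silverman, Theorem~B.5.9, the very reference you name.
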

\begin{proof}
See for
example~\cite[Theorem~B.5.9]{hindrysilverman:diophantinegeometry}.
\end{proof}

\begin{remark}
A well-known weaker form of Lemma~\ref{lemma:algeqht} says that
\begin{equation}
  \label{eqn:hEPoverhDP}
  \lim_{\substack{P\in X(\Kbar)\\ h_X(P)\to\infty\\}}
  \frac{h_E(P)}{h_X(P)} = 0;
\end{equation}
see for example
\cite[Theorem~B.3.2(f)]{hindrysilverman:diophantinegeometry} or
\cite[Chapter~4, Proposition~3.3]{lang:diophantinegeometry}.  We
remark that it is possible to prove that \text{$\aupper_f(P)\le\d_f$} using
only the weaker estimate~\eqref{eqn:hEPoverhDP}, but in order to prove
the quantitative bound in Theorem~\ref{theorem:hXfnledfnhX} and the
error estimate in Theorem~\ref{theorem:hDfPbnhDfnP}, we need the
stronger estimate provided by~\eqref{eqn:hEPleCDEhDP}.
\end{remark}

Applying Lemma~\ref{lemma:algeqht} to~\eqref{eqn:gDkeqsumj1raikDi}
and using additivity of height functions,
we find a constant $C_1=C_1(\e,g)$ such that
\begin{equation}
  \label{multline:hgDQsumaikhDiQ}
  \left|h_{g^*D_k}(Q) - \sum_{i=1}^r a_{ik}(g)h_{D_i}(Q)\right| \le C_1\sqrt{h_X(Q)}
  \quad\text{for all $Q\in X(\Kbar)$.}
\end{equation}
Here and in what follows, the constants~$C_1,C_2,\ldots$ are allowed
to depend on the divisors~$D_1,\ldots,D_r$ and their associated height
functions, as well as on~$X$,~$f$,~$\e$,~$m$,~$\ell$, and~$\epsilon$.
However, we will eventually fix~$m$ and~$\ell$, at which point
\[
  C_i = C_i(X,f,\e,h_{D_1},\ldots,h_{D_r}).
\]
We also remind the reader that we have chosen~$h_X$ to
satisfy~$h_X\ge1$.
\par
We apply Proposition~\ref{proposition:hDflehfD} to the rational
map~$g$ and to each of the ample divisors~$D_1,\ldots,D_r$. Thus
for all points~$Q\in X(\Kbar)$, we have
\begin{align}
\label{eqn:hDkgQlerdAhXQ}
  h_X\bigl(g(Q)\bigr)
  &=\max_{1\le k\le r} h_{D_k}\bigl(g(Q)\bigr) 
    \quad\text{definition of $h_X$,} \notag\\ 
  &\le \max_{1\le k\le r} \bigl(h_{g^*D_k}(Q)+C_2\bigr)
  \quad\text{from Proposition~\ref{proposition:hDflehfD},} \notag\\
  &\le \max_{1\le k\le r} \left(\sum_{i=1}^r a_{ik}(g)h_{D_i}(Q)\right) 
        + C_3\sqrt{h_X(Q)}
   \quad\text{from \eqref{multline:hgDQsumaikhDiQ},} \notag\\
  &\le \left(r \max_{1\le i,k\le r} \bigl|a_{ik}(g)\bigr| \right)h_X(Q)
        +  C_3\sqrt{h_X(Q)} \notag\\
  &= r\bigl\|A(g)\bigr\|h_X(Q) + C_3\sqrt{h_X(Q)}.
\end{align}
\par 
We are going to use the following elementary lemma.

\begin{lemma}
\label{lemma:gShS0inf}
Let~$S$ be a set, let $g:S\to S$ and $h:S\to[0,\infty)$ be maps, let
$a\ge1$ and $b\ge1$ be constants. Suppose that for all $x\in S$ we have
\begin{equation}
  \label{eqn:hgzleahxcsq}
  h\bigl(g(x)\bigr) \le a h(x) + c\sqrt{h(x)}.
\end{equation}
Then for all $x\in S$ and all $n\ge0$, 
\begin{equation}
  \label{eqn:hgnxlean}
  h\bigl(g^n(x)\bigr)
  \le a^n \left(  h(x) + \bigl(2\sqrt2 c\bigr)^n\sqrt{h(x)} \right).
\end{equation}
\end{lemma}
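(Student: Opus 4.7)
The plan is to prove the lemma by induction on $n$, with the base case $n=0$ being immediate from $h(x)\le h(x)+\sqrt{h(x)}$. For the inductive step, set
\[
  B_n := a^n h(x) + a^n(2\sqrt{2}c)^n\sqrt{h(x)}
\]
and assume $h(g^n(x))\le B_n$. Since $y\mapsto ay+c\sqrt{y}$ is non-decreasing on $[0,\infty)$, applying the one-step hypothesis to $g^n(x)$ yields
\[
  h(g^{n+1}(x)) \le aB_n + c\sqrt{B_n}.
\]
The $aB_n$ contribution already supplies the ``main'' part $a^{n+1}h(x)+a^{n+1}(2\sqrt{2}c)^n\sqrt{h(x)}$ of the desired upper bound $B_{n+1}$, so the real task is to absorb $c\sqrt{B_n}$ into the extra factor of $2\sqrt{2}c$ needed when promoting $(2\sqrt{2}c)^n$ to $(2\sqrt{2}c)^{n+1}$.

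To estimate $\sqrt{B_n}$, I would split into two cases according to which summand of $B_n$ dominates. If $a^nh(x)\ge a^n(2\sqrt{2}c)^n\sqrt{h(x)}$, then $B_n\le 2a^nh(x)$ and hence $c\sqrt{B_n}\le c\sqrt{2}\,(\sqrt{a})^n\sqrt{h(x)}$. In the opposite case, $B_n\le 2a^n(2\sqrt{2}c)^n\sqrt{h(x)}$ and $c\sqrt{B_n}\le c\sqrt{2}\,(2\sqrt{2}ac)^{n/2}h(x)^{1/4}$; here I use the standing convention $h=\hplus_X\ge 1$ built into the proof of Theorem~\ref{theorem:hXfnledfnhX} to replace $h(x)^{1/4}$ by $\sqrt{h(x)}$. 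In both cases, dividing through by $a^{n+1}(2\sqrt{2}c)^n\sqrt{h(x)}$ reduces the inductive step to the elementary inequality $a+c\sqrt{2}\le 2\sqrt{2}ac$, equivalently $\tfrac{1}{2a}+\tfrac{1}{2\sqrt{2}c}\le 1$, which holds for all $a,c\ge 1$ (reading the ``$b\ge 1$'' in the statement as a typo for $c\ge 1$).

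The main obstacle is precisely the spurious $h(x)^{1/4}$ term produced by subadditivity of the square root in the second case. Without the normalization $h(x)\ge 1$ built into the use of $\hplus_X$, the stated coefficient $(2\sqrt{2}c)^n$ is not large enough to absorb it. Once $h(x)\ge 1$ is invoked, the constant $2\sqrt{2}$ is revealed as exactly the sharp value at which $\tfrac{1}{2}+\tfrac{1}{2\sqrt{2}}<1$ becomes available, and the induction closes cleanly.
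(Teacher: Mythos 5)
Your proof is correct but routes the induction differently. Writing $g^{n+1}=g\circ g^n$, you bound $h(g^{n+1}(x))\le aB_n+c\sqrt{B_n}$ via the inductive estimate and monotonicity of $y\mapsto ay+c\sqrt y$, and then control the $n$-step quantity $\sqrt{B_n}$ by a two-case analysis on which summand of $B_n$ dominates. The paper instead writes $g^{n+1}=g^n\circ g$ and applies the inductive hypothesis to the point $g(x)$, so it only ever needs the one-step estimate $\sqrt{ah(x)+c\sqrt{h(x)}}\le\sqrt{2ac\,h(x)}$, which it gets at once from $a,c,h(x)\ge1$ with no case split. Both roads end at the same kind of slack computation for the constant $2\sqrt2$ (you verify $a+c\sqrt2\le 2\sqrt2\,ac$; the paper verifies that $2\sqrt2-\sqrt2=\sqrt2$ leaves enough room). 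You also correctly flag two defects in the statement as printed: ``$b\ge1$'' is a typo for ``$c\ge1$'' (fixed in the appendix restatement), and the conclusion genuinely needs $h\ge1$ --- the paper's own proof invokes it (``since $a,c,h(x)\ge1$'') even though the statement only postulates $h:S\to[0,\infty)$, and for $0<h(x)<1$ the bound really does fail (with $a=c=1$, iterating $h\mapsto h+\sqrt h$ from a small value grows roughly like $h(x)^{2^{-n}}$, which outstrips $(2\sqrt2)^n\sqrt{h(x)}$). The lemma is applied correctly, since $h_X$ is normalized to satisfy $h_X\ge1$ in the proof of Theorem~\ref{theorem:hXfnlldfenhX}, but the hypothesis $h:S\to[1,\infty)$ ought to appear in the statement.
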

\begin{proof}
The proof is an elementary induction on~$n$. For the convenience of
the reader, we give the details in Appendix~\ref{appendix:hgxlemma}.
\end{proof}

We apply Lemma~\ref{lemma:gShS0inf} to~\eqref{eqn:hDkgQlerdAhXQ} to
obtain
\begin{align}
  \label{eqn:hXgnQlerAg}
  h_X\bigl(g^n(Q)\bigr) 
  & \le \left(r\bigl\|A(g)\bigr\|\right)^n 
        \Bigl(h_X(Q) + C_4^n\sqrt{h_X(Q)}\Bigr) \notag\\
  & \le \left(C_5r\bigl\|A(g)\bigr\|\right)^n h_X(Q),
\end{align}
where we stress that~$C_4$ and $C_5$ do not depend on~$Q$ or~$n$.
\par
We recall that $g=f^{m\ell}$, which lets us estimate
\begin{align*}
  \bigl\|A(g)\bigr\|
  &= \bigl\|A\bigl((f^\ell)^m\bigr)\bigr\| \\
  &\le C_6 \bigl\|A(f^\ell)^m\bigr\|
    &&\text{Theorem~\ref{thm:AfmleCAfm} applied to $f^\ell$,} \\
  &\le C_7 m^r \rho\bigl(A(f^\ell)\bigr)^m
    &&\text{from Lemma~\ref{lemma:rhoAAn1n},}
\end{align*}
\par
By definition, the dynamical degree is the limit of
$\rho\bigl(A(f^\ell)\bigr)^{1/\ell}$ as~$\ell\to\infty$. So we now fix
an $\ell=\ell(\e,f)$ such that
\[
  \rho\bigl(A(f^\ell)\bigr) \le (\d_f+\e)^\ell.
\]
For this choice of~$\ell$, we have
\begin{equation}
  \label{eqn:AgleC4}
  \bigl\|A(g)\bigr\|
  \le C_7 m^r (\d_f+\e)^{\ell m}.
\end{equation}
\par
Substituting~\eqref{eqn:AgleC4} into~\eqref{eqn:hXgnQlerAg} and
using $g=f^{m\ell}$ gives
\begin{equation}
  \label{eqn:hXfmellnQ}
  h_X\bigl(f^{m\ell n}(Q)\bigr) 
    \le \left(C_8 r  m^r (\d_f+\e)^{\ell m} \right)^n h_X(Q).
\end{equation}
We now take~$P\in X_f(\Kbar)$ as in the statement of
the theorem, and we apply~\eqref{eqn:hXfmellnQ} to each of the points
$P,f(P),\ldots,f^{m\ell-1}(P)$ to obtain
\begin{equation}
  \label{eqn:maxhX1}
  \max_{0\le i<m\ell} h_X\bigl(f^{m\ell n+i}(P)\bigr) 
    \le  \left(C_8 r  m^r (\d_f+\e)^{\ell m} \right)^n 
        \max_{0\le i<m\ell} h_X\bigl(f^i(P)\bigr).
\end{equation}
For $0\le i<m\ell$, we apply Proposition~\ref{proposition:hDflehfD} to
each of the heights $h_X\bigl(f^i(P)\bigr)$.  Using the fact that the
ample height~$h_X$ dominates any other height~$h_D$, i.e., $h_X\gg
h_D$ with a constant depending on~$D$, we obtain
\begin{equation}
  \label{eqn:maxhX2}
  \max_{0\le i<m\ell} h_X\bigl(f^i(P)\bigr) \le C_9h_X(P).
\end{equation}
Combining~\eqref{eqn:maxhX1} and~\eqref{eqn:maxhX2} gives
\begin{equation}
  \label{eqn:maxhX3}
  \max_{0\le i<m\ell} h_X\bigl(f^{m\ell n+i}(P)\bigr) 
    \le C_9 \left(C_8 r  m^r (\d_f+\e)^{\ell m} \right)^n h_X(P).
\end{equation}
\par
Now let $q\ge1$ be any integer and write
\[
  q = m\ell n+i\quad\text{with $0\le i<m\ell$.}
\]
Then~\eqref{eqn:maxhX3} implies that
\begin{equation}
  \label{eqn:maxhX4}
  h_X\bigl(f^q(P)\bigr)
  \le C_9 (C_8rm^r)^{q/m\ell} (\d_f+\e)^q h_X(P),
\end{equation}
where we have used the trivial estimates $\ell m n\le q$ and $n\le
q/m\ell$. The key point to note about the inequality~\eqref{eqn:maxhX4}
is that the quantity $(C_8rm^r)^{1/m\ell}$ is independent of~$q$
and goes to~$1$ as $m\to\infty$. So we now fix a value of~$m$ such
that 
\[
  (C_8rm^r)^{1/m\ell} \le (1+\e).
\]
This value of~$m$ depends on~$\e$, and of course it depends on~$X$
and~$f$, but it does not depend on the integer~$q$ or the
point~$P$. We note that the constant~$C_9$ now also depends on~$\e$,
but not on~$q$ or~$P$.  Hence~\eqref{eqn:maxhX4} becomes
\begin{equation}
  \label{eqn:maxhX5}
  h_X\bigl(f^q(P)\bigr)
  \le C_9 (1+\e)^q (\d_f+\e)^q h_X(P).
\end{equation}
We have proven that~\eqref{eqn:maxhX5} holds for all~$P\in X_f(\Kbar)$
and all~$q\ge0$, where does not depend on~$q$.  After adjusting~$\e$,
the inequality~\eqref{eqn:maxhX5} is the desired result, which
completes the proof of Theorem~\ref{theorem:hXfnledfnhX}.
\end{proof}

\section{An application to canonical heights}
\label{section:nefcanht}
In this section we use Theorem~\ref{theorem:hXfnledfnhX} to prove
Theorem~\ref{theorem:hDfPbnhDfnP}, which says that the usual canonical
height limit converges for certain eigendivisor classes relative to
\emph{algebraic} equivalence. We remark that the result is well-known
(and much easier to prove) for eigendivisor classes relative to
\emph{linear} equivalence; cf.\ \cite{callsilv:htonvariety}.

\begin{proof}[Proof of Theorem~\textup{\ref{theorem:hDfPbnhDfnP}}]
To ease notation, we will let $\d=\d_f$.
\par\noindent(a)\enspace
Theorem~\ref{theorem:hXfnledfnhX} says
that for every~$\e>0$ there is a constant $C_1=C_1(X,h_X,f,\e)$ such that
\begin{equation}
  \label{eqn:hAfnPleden}
  \hplus_X\bigl(f^n(P)\bigr) \le C_1\cdot(\d+\e)^n\cdot\hplus_X(P)
  \quad\text{for all $n\ge0$.}
\end{equation}
We are given that~$f^*D\equiv\b D$. Applying Lemma~\ref{lemma:algeqht}
with $E=f^*D-\b D$, we find a constant~$C_2=C_2(D,A,f)$ such that
\begin{equation}
  \label{eqn:hfstarDQ}
  \bigl| h_{f^*D}(Q) - \b h_D(Q) \bigr| \le C_2 \sqrt{\hplus_X(Q)}
  \quad\text{for all $Q\in X(\Kbar)$.}
\end{equation}
Since we have assumed that~$f$ is a morphism, standard functoriality
of the Weil height says that $h_{f^*D}=h_D\circ f+O(1)$, 
so~\eqref{eqn:hfstarDQ} becomes
\begin{equation}
  \label{eqn:hDfQdhDQleChAQ}
  \bigl| h_D\bigl(f(Q)\bigr) - \b h_D(Q) \bigr| \le C_3 \sqrt{\hplus_X(Q)}
  \quad\text{for all $Q\in X(\Kbar)$.}
\end{equation}
\par
For $N\ge M\ge 0$ we estimate a telescoping sum,
\begin{align}
\label{eqn:MNdifleCsum}
  &\Bigl|\b^{-N}h_D\bigl(f^N(P)\bigr) -  \b^{-M}h_D\bigl(f^M(P)\bigr)\Bigr| 
          \notag\\*
  &= \biggl|\sum_{n=M+1}^N \b^{-n}\Bigl(h_D\bigl(f^n(P)\bigr)
      - \b h_D\bigl(f^{n-1}(P)\bigr)\Bigr) \biggr| \notag\\
  &\le \sum_{n=M+1}^N \b^{-n}\Bigl|h_D\bigl(f^n(P)\bigr)
      - \b h_D\bigl(f^{n-1}(P)\bigr)\Bigr| \notag\\
  &\le \sum_{n=M+1}^N \b^{-n} C_3 \sqrt{\hplus_X\bigl(f^{n-1}(P)\bigr)}
     \quad\text{applying \eqref{eqn:hDfQdhDQleChAQ} with $Q=f^{n-1}(P)$,}
            \notag\\
  &\le \sum_{n=M+1}^N \b^{-n} C_3 \sqrt{  C_1(\d+\e)^{n-1}\hplus_X(P) }
     \quad\text{from \eqref{eqn:hAfnPleden},} \notag\\*
  &\le C_4
     \sum_{n=M+1}^\infty \left(\frac{\d+\e}{\b^2}\right)^{n/2}\sqrt{\hplus_X(P)}.
\end{align}
\par
By assumption we have $\b>\sqrt{\d}$, so we can take
\[
  \e=\frac{\b^2-\d}{2},
  \quad\text{which implies that}\quad
  \g := \frac{\d+\e}{\b^2} = 1 - \frac{\b^2-\d}{2\b^2} < 1.
\]
Hence the series~\eqref{eqn:MNdifleCsum} converges, and we obtain the
estimate
\begin{equation}
  \label{eqn:bNhDfNbMhDfM}
  \Bigl|\b^{-N}h_D\bigl(f^N(P)\bigr) -  \b^{-M}h_D\bigl(f^M(P)\bigr)\Bigr| 
  \le C_5 \g^{M/2} \sqrt{\hplus_X(P)},
\end{equation}
where $C_5=C_5(X,f,D)$ is independent of~$P$,~$N$, and~$M$.
Then~\eqref{eqn:bNhDfNbMhDfM} and the fact that $\g<1$ imply that the
sequence $\b^{-n}h_D\bigl(f^n(P)\bigr)$ is Cauchy, which proves~(a).
\par\noindent(b)\enspace
The formula $\hhat_{D,f}\bigl(f(P)\bigr) = \b \hhat_{D,f}(P)$ follows
immediately from the limit defining~$\hhat_{D,f}$ in~(a).  Next,
letting~$N\to\infty$ and setting~$M=0$ in in~\eqref{eqn:bNhDfNbMhDfM}
gives
\[
  \bigl|\hhat_{f,D}(P) - h_D(P)\bigr| \le C_5 \sqrt{\hplus_X(P)},
\]
which completes the proof of~(b).
\par\noindent(c)\enspace
We are assuming that $\hhat_{f,D}(P)\ne0$.  If $\hhat_{f,D}(P)<0$, we
change~$D$ to~$-D$, so we may assume that $\hhat_{f,D}(P)>0$.
Let~$H\in\Div(X)$ be an ample divisor such that $H+D$ is also ample.
(This can always be arranged by replacing~$H$ with~$mH$ for a
sufficiently large~$m$.) Since~$H$ is ample, we may assume that the
height function~$h_H$ is non-negative.  We compute
\begin{align*}
  h_{D+H}&\bigl(f^n(P)\bigr) \\
  &= h_D\bigl(f^n(P)\bigr) + h_H\bigl(f^n(P)\bigr) + O(1) \\
  &\ge h_D\bigl(f^n(P)\bigr) + O(1)
      \quad\text{since $h_H\ge0$,} \\
  &= \hhat_{f,D}\bigl(f^n(P)\bigr) 
       + O\left(\sqrt{\hplus_X\bigl(f^n(P)\bigr)}\right)
       \quad\text{from (b),} \\
  &= \b^n\hhat_{f,D}(P)
       + O\left(\sqrt{\hplus_X\bigl(f^n(P)\bigr)}\right)
       \quad\text{from (b),} \\
  &= \b^n\hhat_{f,D}(P)
       + O\left(\sqrt{C(\d+\e)^n\hplus_X(P) }\right)
       \quad\text{from Theorem~\ref{theorem:hXfnledfnhX}.}
\end{align*}
This estimate is true for every~$\e>0$, where~$C$ depends on~$\e$.
Using the assumption that~$\b>\sqrt{\d}$, we 
can choose an~$\e>0$ satisfying $\d+\e<\b^2$. This gives
\[
  h_{D+H}\bigl(f^n(P)\bigr)
  \ge \b^n\hhat_{f,D}(P) + o(\b^n),
\]
so taking $n^{\text{th}}$-roots, using the assumption that $\hhat_{f,D}(P)>0$,
and letting~$n\to\infty$ yields
\[
  \alower_f(P)
  = \liminf_{n\to\infty} h_{D+H}\bigl(f^n(P)\bigr)^{1/n}
  \ge \b.
\]
(Note that Proposition~\ref{proposition:afPindepofht} says that we can
use~$h_{D+H}$ to compute~$\alower_f(P)$, since~$D+H$ is ample.)
\par\noindent(d)\enspace
From~(c) we get $\alower_f(P)\ge\b=\d_f$,
while Theorem~\ref{theorem:afPledfintro}
gives $\aupper_f(P)\le\d_f$.  Hence the limit defining
$\a_f(P)$ exists and is equal to~$\d_f$.
\par\noindent(e)\enspace
One direction is trivial. For the other, suppose that $\hhat_{D,f}(P)=0$.
Since we are assuming that~$D$ is ample, we may take $h_X=h_D$ and $h_D\ge1$.
Then for any~$n\ge0$, we apply~(b) to the point~$f^n(P)$ to obtain
\[
  0 = \b^n\hhat_{D,f}(P) = \hhat_{D,f}\bigl(f^n(P)\bigr)
  \ge h_D\bigl(f^n(P)\bigr) - c\sqrt{h_D\bigl(f^n(P)\bigr)}.
\]
This gives $h_D\bigl(f^n(P)\bigr) \le c^2$, where~$c$ does not depend
on~$P$ or~$n$.  This shows that~$\Orbit_f(P)$ is a set of bounded
height with respect to an ample height.  Since~$\Orbit_f(P)$ is
contained in~$X\bigl(K(P)\bigr)$ and since we have assumed that $K$ is
a number field, we conclude that~$\Orbit_f(P)$ is finite.
\end{proof}

\begin{remark}
\label{remark:fDdfDexists}
If~$f$ is a morphism, then De-Qi Zhang has pointed out that there is
always at least one nonzero nef divisor class~$D\in\NS(X)_\RR$
satisfying $f^*D \equiv \d_f D$.  So there is always at least one
nontrivial nef divisor class to which
Theorem~\ref{theorem:hDfPbnhDfnP} applies, although there need not be
any such ample divisor classes.  The existence of such a~$D$ is an
immediate consequence of the following elementary
Perron--Frobenius-type result of Birkhoff, applied to the vector
space~$\RR^r=\NS(X)_\RR$, the linear transformation~$T=f^*$, and the
cone~$C=\Nef(X)$; cf.\ \cite[Lemma~1.12]{MR1867314}.
\end{remark}

\begin{proposition}
\label{proposition:birkhoff}
\textup{(Birkhoff~\cite{MR0214605})} Let $C\subset\RR^r$ be a strictly
convex closed cone with nonempy interior, and let $T:\RR^r\to\RR^r$ be an
$\RR$-linear map with $T(C)\subseteq C$. Then~$C$ contains an
eigenvector whose eigenvalue is the spectral radius of~$T$.
\end{proposition}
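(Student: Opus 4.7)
The plan is a finite-dimensional Perron--Frobenius/Krein--Rutman argument: construct the desired eigenvector as a subsequential limit of the normalized resolvent values $(zI-T)^{-1}w/\|(zI-T)^{-1}w\|$, where $w$ is an interior point of $C$, as the real parameter $z$ decreases to $\rho:=\rho(T)$. First I would dispose of the trivial case $\rho=0$: then $T$ is nilpotent, so for any $w\in\interior(C)$, taking the largest $k$ with $T^kw\ne 0$ produces $T^kw\in C\setminus\{0\}$ with $T(T^kw)=0$, an eigenvector for the eigenvalue $0=\rho$. Henceforth assume $\rho>0$.

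Because $C$ is closed, pointed, and has nonempty interior, the dual cone $C^*$ has nonempty interior too, so there exists a linear functional $\ell_0$ with $\ell_0(v)>0$ on all of $C\setminus\{0\}$; compactness of $C\cap\{\|v\|=1\}$ then yields $\ell_0(v)\asymp\|v\|$ for $v\in C$. Fix $w\in\interior(C)$: for some $\e>0$ and every unit vector $u\in\RR^r$ one has $w\pm\e u\in C$. Applying $T^n$ and $\ell_0$ gives $|\ell_0(T^nu)|\le\e^{-1}\ell_0(T^nw)$. Writing $T^nu=\e^{-1}\bigl(T^n(w+\e u)-T^nw\bigr)$ and using that $T^n(w+\e u)\in C$ together with $\ell_0\asymp\|\cdot\|$ on $C$ upgrades this to a norm bound $\|T^nu\|\ll\|T^nw\|$, uniformly in $u$. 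Taking the supremum over unit $u$ yields $\|T^n\|\asymp\|T^nw\|$, and combining with Lemma~\ref{lemma:rhoAAn1n}, which gives $\|T^n\|\ge c_1\rho^n$, forces the series $\sum_{n\ge0}\rho^{-n-1}\|T^nw\|$ to diverge.

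Now for each real $z>\rho$ the Neumann series $R_z:=(zI-T)^{-1}w=\sum_{n\ge 0}z^{-n-1}T^nw$ converges, and each term lies in the closed convex cone $C$, so $R_z\in C$ as well. Moreover $\ell_0(R_z)\asymp\sum z^{-n-1}\|T^nw\|\to\infty$ as $z\to\rho^+$, whence $\|R_z\|\to\infty$. The normalizations $u_z:=R_z/\|R_z\|$ lie on the compact set $C\cap\{\|u\|=1\}$, so a subsequence converges to some $u^*\in C$ with $\|u^*\|=1$. Applying $zI-T$ gives $(zI-T)u_z=w/\|R_z\|\to 0$, and passing to the limit yields $Tu^*=\rho u^*$, as desired. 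The main obstacle is establishing the comparison $\|T^n\|\asymp\|T^nw\|$ for $w\in\interior(C)$; this is the one step where both the pointedness of $C$ and the interior hypothesis on $w$ are used essentially, and it is what makes the resolvent genuinely blow up at $\rho$ when tested against $w$. Everything downstream is a standard resolvent calculation.
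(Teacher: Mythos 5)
The paper gives no proof of this proposition; it is stated with a bare citation to Birkhoff's 1967 Monthly article, so there is nothing internal to compare your argument against. Your resolvent (Krein--Rutman) proof is correct, and all three hypotheses are used where they must be: nonempty interior supplies $w$ and the $\e$-ball inside $C$, strict convexity (pointedness) supplies the strictly positive functional $\ell_0\in\interior(C^*)$ with $\ell_0\asymp\|\cdot\|$ on $C$, and closedness keeps the Neumann series $R_z=\sum_{n\ge0}z^{-n-1}T^nw$ and the limit $u^*$ inside $C$. The comparison $\|T^n\|\asymp\|T^nw\|$ via $T^n u=\e^{-1}\bigl(T^n(w+\e u)-T^nw\bigr)$ is exactly the right mechanism for forcing the blow-up of $\ell_0(R_z)$ as $z\downarrow\rho$ (combined with the lower bound $\|T^n\|\ge c_1\rho^n$ from Lemma~\ref{lemma:rhoAAn1n} and monotone convergence), and passing to the limit in $(zI-T)u_z=w/\|R_z\|\to0$ along a convergent subsequence on the compact slice $C\cap\{\|u\|=1\}$ delivers $Tu^*=\rho u^*$. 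The nilpotent case $\rho=0$ is also handled cleanly. The only stylistic note: Birkhoff's original argument and the commonly cited elementary proofs proceed instead via a Brouwer fixed-point argument on the compact convex cross-section $C\cap\{\ell_0=1\}$, or via the Hilbert projective metric; your resolvent route is equally valid and arguably cleaner since it avoids normalizing the map $v\mapsto Tv$ on the slice (which requires separately ruling out the degenerate case $Tv=0$ on the slice).
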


\begin{question}
It would be interesting to know if Theorem~\ref{theorem:hDfPbnhDfnP}
is true for algebraically stable rational maps that are not morphisms.
\end{question}

\section{An alternative proof of Proposition~\ref{proposition:hDflehfD}}
\label{section:htinequalityaltpf}

In this section we give an alternative, more elementary, proof of
Proposition~\ref{proposition:hDflehfD}.  The proof uses three lemmas,
one geometric, one arithmetic, and the third combining the first two.

\begin{lemma}
\label{lemma:effplusamp}
Let~$D\in\Div(X/K)$ be an effective divisor. Then there exists an
integer~$r\ge1$ and an effective ample divisor~$D'\in\Div(X/K)$ such
that~$rD+D'$ is ample.
\end{lemma}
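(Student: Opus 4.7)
The plan is to reduce the statement to the openness of the ample cone in $\NS(X)_\RR$ combined with the fact that any very ample divisor is represented by an effective divisor.

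First, I would pick a very ample divisor $H$ on $X$. Since $H$ is very ample, the complete linear system $|H|$ has positive-dimensional sections defined over~$K$ (after passing to a suitable multiple if necessary), so $H$ is linearly equivalent to an effective divisor, which we may take as our representative of the class of~$H$. Thus we may assume $H$ itself is simultaneously effective, very ample, and in particular ample.

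Next, I would invoke the fact that the ample cone $\Amp(X)\subset\NS(X)_\RR$ is open. Considering the ray $\frac{1}{n}[D] + [H]$ in $\NS(X)_\RR$, as $n\to\infty$ this ray converges to the ample class $[H]$, which lies in the interior of $\Nef(X)$. Hence for $n$ sufficiently large, $\frac{1}{n}[D] + [H]$ is ample, and multiplying by~$n$ shows that $D + nH$ is ample. Setting $r = 1$ and $D' = nH$ yields an effective ample divisor $D'$ such that $rD + D' = D + nH$ is ample, as required.

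I don't anticipate any serious obstacle here; the only minor subtlety is to ensure that $D'$ is \emph{both} effective and ample, which is why I choose $H$ to be very ample (and hence effective after selecting a representative in its linear system) rather than merely ample. Alternatively, one could phrase the argument using Kodaira's lemma or Seshadri's criterion, but the direct openness argument is the cleanest route given that we only need the existence of some such~$D'$ without any control on its other properties.
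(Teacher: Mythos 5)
Your proof is correct, and it takes a slightly different and somewhat cleaner route than the paper. The paper fixes an ample $H$, finds $m$ with $mH-D$ ample, passes to a multiple $r$ so that $rmH-rD$ is very ample, and takes $D'$ to be an effective representative of $rmH-rD$, giving $rD+D'\sim rmH$ ample. You instead fix $H$ very ample with an effective representative over $K$, invoke openness of the ample cone to get $D+nH$ ample for $n$ large, and simply set $r=1$, $D'=nH$. Both rest on the same core fact (an ample class eventually dominates any fixed class), but your construction of $D'$ avoids the step of choosing an effective representative of the class $rmH-rD$, and it always achieves $r=1$, which is a stronger conclusion than the lemma asks for. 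The only point worth being explicit about is that the effective very ample representative of $H$ can be chosen over $K$ (a nonzero $K$-rational global section of $\Ocal_X(H)$ gives such a representative), but the paper's own argument tacitly relies on the same fact when it selects its effective $D'$, so this is not a gap relative to the paper.
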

\begin{proof}
Let~$H\in\Div(X/K)$ be an ample divisor. Then there exists an
integer~$m\ge1$ such that~$mH-D$ is ample, and hence an
integer~$r\ge1$ such that~$rmH-rD$ is very ample. Since~$rmH-rD$ is
very ample, there is an effective (and necessarily very ample)
divisor~$D'$ that is linearly equivalent to $rmH-rD$. Then~$rD+D'\sim
rmH$ is (very) ample, since it is a positive multiple of a very ample
divisor
\end{proof}

\begin{lemma}
\label{lemma:htmorecoords}
Let~$\a_0,\ldots,\a_n,\b_0,\ldots,\b_m\in\Kbar$ with not all of
the~$\a_i$ equal to~$0$. Then
\[
  h\bigl([\a_0,\ldots,\a_n,\b_0,\ldots,\b_m]\bigr)
  \ge h\bigl([\a_0,\ldots,\a_n]\bigr).
\]
\end{lemma}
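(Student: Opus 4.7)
The statement is essentially a tautology about absolute logarithmic heights on projective space once one unwinds the definition, so the plan is just to reduce it to a place-by-place comparison.

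First I would pick a number field (or function field) $L/K$ large enough to contain all of $\a_0,\ldots,\a_n,\b_0,\ldots,\b_m$, and recall the standard formula
\[
  h\bigl([x_0,\ldots,x_N]\bigr)
  = \frac{1}{[L:\QQ]} \sum_{v\in M_L} n_v \log \max_{0\le i\le N} |x_i|_v,
\]
where $n_v$ is the local degree and $M_L$ the set of places. The hypothesis that not all $\a_i$ vanish guarantees that both heights in the statement are well defined, since the longer tuple is automatically nonzero.

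Next I would observe the trivial pointwise inequality: at every place $v\in M_L$,
\[
  \max\bigl\{|\a_0|_v,\ldots,|\a_n|_v,|\b_0|_v,\ldots,|\b_m|_v\bigr\}
  \;\ge\;
  \max\bigl\{|\a_0|_v,\ldots,|\a_n|_v\bigr\},
\]
simply because the maximum on the left is taken over a superset of the terms on the right. Taking $\log$, multiplying by the nonnegative weight $n_v/[L:\QQ]$, and summing over $v\in M_L$ preserves the inequality and produces exactly the desired bound
\[
  h\bigl([\a_0,\ldots,\a_n,\b_0,\ldots,\b_m]\bigr)
  \ge h\bigl([\a_0,\ldots,\a_n]\bigr).
\]

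There is no real obstacle here; the only thing to check is that the definition of the height is independent of the choice of the field $L$ (standard) and that the extra coordinates cannot make the tuple identically zero on the left side, which is immediate from the hypothesis. So the entire proof reduces to the one-line observation that $\max$ over a larger index set dominates $\max$ over a smaller one.
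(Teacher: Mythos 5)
Your proof is correct and is essentially identical to the paper's: both extend the base field to contain all the coordinates, write the height as a sum over places of the log of the maximum of the absolute values, and use the trivial place-by-place observation that the maximum over a superset of coordinates dominates the maximum over the subset.
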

\begin{proof}
Extending~$K$, we may assume
that~$\a_0,\ldots,\a_n,\b_0,\ldots,\b_m\in K$.  Letting~$M_K$ be an
appropriately normalized set of inequivalent absolute values on~$K$,
the definition of the Weil height on~$\PP^n$ gives
\begin{align*}
  h\bigl([\a_0,\ldots,\a_n]\bigr)
  &= \sum_{v\in M_K} \log\max \bigl\{\|\a_0\|_v,\ldots,\|\a_n\|_v\bigr\} \\
  &\le \sum_{v\in M_K} \log\max \bigl\{\|\a_0\|_v,\ldots,\|\a_n\|_v,
      \|\b_0\|_v,\ldots,\|\b_m\|_v\bigr\} \\
  &= h\bigl([\a_0,\ldots,\a_n,\b_0,\ldots,\b_m]\bigr),
\end{align*}
which completes the proof of Lemma~\ref{lemma:htmorecoords}.
\end{proof}

\begin{lemma}
\label{lemma:hEPgehyP}
Let~$D\in\Div(X)$ be an effective divisor,
let
\[
  1=x_0,x_1,\ldots,x_n\in\G\bigl(X,\Ocal(D)\bigr),
\]
and fix a height function~$h_D$ on~$X(\Kbar)$ associated to~$D$. Then
there is a constant~$C=C(X,f,h_D)$ such that for all points~$P\in
X(\Kbar)$ such that~$x_0,\ldots,x_n$ are defined at~$P$,
\[
  h_D(P) \ge h\bigl(\bigl[x_0(P),x_1(P),\ldots,x_n(P)\bigr]\bigr) - C.
\]
\end{lemma}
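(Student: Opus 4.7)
The plan is to use Lemma~\ref{lemma:effplusamp} to embed $D$ into a very ample divisor $A$, and then to exploit a Segre-type tensor construction that cancels out the auxiliary contribution this introduces.

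I would begin by applying (a slight strengthening of) Lemma~\ref{lemma:effplusamp} to obtain an integer $r\ge 1$ and an effective very ample divisor $D'$ such that $A:=rD+D'$ is also very ample. Fixing bases $y_0,\ldots,y_N$ of $\G(X,\Ocal(A))$ and $z_0,\ldots,z_{N'}$ of $\G(X,\Ocal(D'))$ gives closed embeddings $\iota_A:X\hookrightarrow\PP^N$ and $\iota_{D'}:X\hookrightarrow\PP^{N'}$ satisfying $h_A(P)=h(\iota_A(P))+O(1)$ and $h_{D'}(P)=h(\iota_{D'}(P))+O(1)$. I would next consider the morphism $\Psi:X\to\PP^M$ whose coordinates are the sections $\{x_i^r z_j\}_{0\le i\le n,\,0\le j\le N'}$ together with $\{y_k\}_{0\le k\le N}$, all of which lie in $\G(X,\Ocal(A))$. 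Since the $y_k$ alone give an embedding, $\Psi$ is a morphism with $\Psi^*\Ocal(1)$ in the class of $A$; morphism functoriality of heights yields $h(\Psi(P))=h_A(P)+O(1)$, and Lemma~\ref{lemma:htmorecoords} (applied to drop the $y_k$ coordinates) then gives
\[
  h\bigl([x_i^r(P)z_j(P)]_{i,j}\bigr)\le h(\Psi(P))=h_A(P)+O(1).
\]

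On the other side, the Segre height identity $h([a_ib_j]_{i,j})=h([a_i]_i)+h([b_j]_j)$ together with $h([a_i^r]_i)=r\,h([a_i]_i)$ rewrites the left-hand side as $r\cdot h\bigl([x_0(P),\ldots,x_n(P)]\bigr)+h_{D'}(P)+O(1)$. Combining this with the decomposition $h_A=rh_D+h_{D'}+O(1)$, the $h_{D'}(P)$ terms cancel and dividing by $r$ yields the desired estimate $h\bigl([x_0(P),\ldots,x_n(P)]\bigr)\le h_D(P)+O(1)$.

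The main obstacle is the appearance of the auxiliary $h_{D'}$: a naive argument that expresses each $x_i^r$ as a linear combination of the basis $y_k$ of $\G(X,\Ocal(A))$ only produces the weaker inequality
\[
  h\bigl([x_0(P),\ldots,x_n(P)]\bigr)\le h_D(P)+\tfrac{1}{r}h_{D'}(P)+O(1),
\]
which is useless because $h_{D'}(P)$ can be arbitrarily large as $P$ varies. The Segre tensor construction is tailored precisely so that $h_{D'}(P)$ appears symmetrically on both sides of the estimate and can therefore be cancelled.
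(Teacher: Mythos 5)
Your proposal is correct and is essentially the same argument as the paper's: both rest on Lemma~\ref{lemma:effplusamp}, Lemma~\ref{lemma:htmorecoords}, and the multiplicativity of heights under the Segre embedding, with the crucial cancellation of the auxiliary $h_{D'}$ term. The only difference is organizational — you fold the passage to a multiple of $D$ into a single step via the products $x_i^r z_j$ and the identity $h([a_i^r]) = r\,h([a_i])$, whereas the paper first performs a separate reduction to $dD$ via the $d$-uple embedding and then works with $x_i z_j$ — but the underlying mechanism is identical.
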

\begin{proof}
Let
\[
  \t=[x_0,\ldots,x_n] : X \dashrightarrow \PP^n
\]
be the rational map induced by the functions~$x_0,\ldots,x_n$.
\par
We first prove that it suffices to prove the lemma for a positive
multiple~$dD$ of~$D$. We use the $d$-uple 
embedding~$\s_d : \PP^n\to\PP^N$; see~\cite[Exercise~I.2.12]{hartshorne}.
The~$d$-uple embedding has the property that there is an exact
equality~\cite[Proposition~B.2.4]{hindrysilverman:diophantinegeometry}
\begin{equation}
  \label{eqn:hduple}
  h\bigl(\s_d(Q)\bigr) = dh(Q).
\end{equation}
Suppose that the lemma is true for~$dD$ and all choices of functions
in~$\G\bigl(X,\Ocal(dD)\bigr)$. We take the functions $y_0,\ldots,y_m$
consisting of all monomials~$x_0^{e_0}x_1^{e_1}\cdots x_n^{e_n}$ satisfying
$e_i\ge0$ and $\sum e_i=d$. We note that every~$y_i$ is
in~$\G\bigl(X,\Ocal(dD)\bigr)$. Then
\begin{align*}
  h_D(P)
  &= \frac{1}{d}h_{dD}(P) \\*
  &\ge \frac{1}{d} h\bigl(\bigl[y_0(P),y_1(P),\ldots,y_m(P)\bigr]\bigr) - C \\*
  &\omit\hfill\hspace{3em}
     since we are assuming that the lemma is true for $dD$, \\
  &= \frac{1}{d} h\bigl(\s_d(\t(P))\bigr) - C \\
  &= h\bigl(\t(P)\bigr) - C 
     \quad\text{from \eqref{eqn:hduple},} \\*
  &= h\bigl(\bigl[x_0(P),x_1(P),\ldots,x_n(P)\bigr]\bigr) - C.
\end{align*}
\par
We use Lemma~\ref{lemma:effplusamp} to find an integer~$r\ge1$ and an
effective ample divisor~$D'\in\Div(X/k)$ such that~$rD+D'$ is ample.
As noted above, we may replace~$D$ by~$rD$, and by the same remark, we
may replace~$D$ and~$D'$ by appropriate multiples so that~$D'$
and~$D+D'$ are very ample.  We choose a
basis~$1=z_0,z_1,\ldots,z_\ell$ for~$\G\bigl(X,\Ocal_X(D')\bigr)$.
Then the functions~$x_iz_j$ satisfy
\[
  x_iz_j\in\G\bigl(X,\Ocal_X(D+D')\bigr)
  \quad\text{for $0\le i\le n$ and $0\le j\le \ell$,}
\]
so we can find a spanning set $1=w_0,w_1,\ldots,w_k$
for~$\G\bigl(X,\Ocal_X(D+D')\bigr)$ whose first $(n+1)(\ell+1)$
elements are the functions~$x_iz_j$.
\par
In order to define the Weil height associated to a divisor, one writes
the divisor as the difference of very ample divisors and takes the
difference of the heights, where the height associated to a very ample
divisor is defined by using an associated projective embedding. In
our case, we have written~$D$ as the difference $(D+D')-D'$, so we
have
\begin{align*}
  h_D(P)
  &= h_{D+D'}(P) - h_{D'}(P) \\*
  &= h\bigl(\bigl[w_0(P),\ldots,w_k(P)\bigr]\bigr)
       - h\bigl(\bigl[z_0(P),\ldots,z_\ell(P)\bigr]\bigr) \\
  &\ge h\bigl(\bigl[x_iz_j(P)\bigr]_{0\le i\le n,\,0\le j\le\ell}\bigr)
       - h\bigl(\bigl[z_0(P),\ldots,z_\ell(P)\bigr]\bigr) \\*
  &\omit\hfill from Lemma~\ref{lemma:htmorecoords}, \\*
  &= h\bigl(\bigl[x_0(P),\ldots,x_n(P)\bigr]\bigr) 
    \quad\text{from
      \cite[Proposition~B.2.4(b)]{hindrysilverman:diophantinegeometry}}\\*
  &\omit\hfill 
       (Segre embedding).
\end{align*}
Choosing a different representative for~$h_D$ will introduce a bounded
error, which accounts for the~$C$ in the statement of the lemma.
This completes the proof of Lemma~\ref{lemma:hEPgehyP} for all
points at which the 
functions $x_0,\ldots,x_n,w_0,\ldots,w_k,z_0,\ldots,z_\ell$ are regular. But
since~$D+D'$ and~$D'$ are very ample, we can repeat the argument using
a finite number of other bases for~$\G\bigl(X,\Ocal_X(D+D')\bigr)$
and~$\G\bigl(X,\Ocal_X(D')\bigr)$ so as to obtain the desired estimate
for all points at which~$x_0,\ldots,x_n$ are regular. 
\end{proof}

\begin{proof}[Alternative Proof of Proposition~$\ref{proposition:hDflehfD}$]
Replacing~$D$ by a multiple, we may assume that~$D$ is very ample
and effective. We let~$1=x_0,x_1,\ldots,x_n$ be a basis for
$\G\bigl(X,\Ocal_X(D)\bigr)$.
\par
Let~$E\in\Div(X)$ be a prime divisor, i.e., an irreducible
codimension~$1$ subvariety of~$X$. Then by definition~$f^*E$ is equal
to the Zariski closure $\overline{f^{-1}(E\setminus I_f)}$. Hence our
assumption that~$D$ is effective implies that~$f^*D$ is effective.
Further, there is a natural map
\[
  f^* : \G\bigl(X,\Ocal_X(D)\bigr)
  \longrightarrow \G\bigl(Y,\Ocal_X(f^*D)\bigr),
\]
so in particular,
\[
  f^*x_0,\ldots,f^*x_n \in \G\bigl(Y,\Ocal_X(f^*D)\bigr).
\]
We apply Lemma~\ref{lemma:hEPgehyP} to the divisor~$f^*D$ and
functions $f^*x_0,\ldots,f^*x_n$. This yields
\begin{equation}
  \label{eqn:hfDPge}
  h_{Y,f^*D}(P) \ge h\bigl(\bigl[f^*x_0(P),\ldots,f^*x_n(P)\bigr]\bigr) - C.
\end{equation}
On the other hand, the functions~$x_0,\ldots,x_n$ give an embedding
\[
  \t=[x_0,\ldots,x_n] : X \hookrightarrow \PP^n
  \quad\text{satisfying}\quad \t^*\Ocal_{\PP^n}(1)=\Ocal_X(D),
\]
so for points~$Q\in X(\Kbar)$ at which~$x_0,\ldots,x_n$ are regular,
we have
\[
  h_{X,D}(Q) = h\bigl(\t(Q)\bigr)
  = h\bigl(\bigl[x_0(Q),x_1(Q),\ldots,x_n(Q)\bigr]\bigr) + O(1).
\]
Applying this with $Q=f(P)$ and noting that
$x_i\bigl(f(P)\bigr)=f^*x_i(P)$, we find that
\begin{equation}
  \label{eqn:hDfPeqhfx}
  h_{X,D}\bigl(f(P)\bigr)
  = h\bigl(\bigl[f^*x_0(P),\ldots,f^*x_n(P)\bigr]\bigr) + O(1).
\end{equation}
Combining~\eqref{eqn:hfDPge} and~\eqref{eqn:hDfPeqhfx} gives
\[
  h_{Y,f^*D}(P) \ge  h_{X,D}\bigl(f(P)\bigr) + O(1),
\]
which gives the desired result for points where all of the
functions $f^*x_0,\ldots,f^*x_n$ are regular.  By taking a finite
number of different effective divisors in the very ample divisor class
of~$D$, we obtain analogous inequalities that cover all points~$P$ at
which~$f$ is defined.
\end{proof}

\section{Some Instances of Conjecture~\ref{conjecture:afPeqdfY}}
\label{section:caseafPeqdf}

Let~$P\in X_f(\Kbar)$.  We recall that
Conjecture~\ref{conjecture:afPeqdfY} asserts:
\begin{parts}
\Part{\textbullet}
$\a_f(P)$ exists and is an algebraic integer.
\Part{\textbullet}
$\bigl\{\a_f(P):P\in X_f(\Kbar)\bigr\}$ is a finite set.
\Part{\textbullet}
If~$\Orbit_f(P)$ is Zariski dense in~$X$, then $\a_f(P)=\d_f$.
\end{parts}

The following theorem describes some cases for which we can prove
Conjecture~\ref{conjecture:afPeqdfY}.

\begin{theorem}
\label{theorem:casesconjistrue}
Conjecture~$\ref{conjecture:afPeqdfY}$ is true in the following
situations\textup:
\begin{parts}
\Part{(a)}
$f$ is a morphism and $\NS(X)_\RR=\RR$. 
\Part{(b)}
$f:\PP^N\dashrightarrow\PP^N$ extends a regular affine
automorphism~$\AA^N\to\AA^N$.
\Part{(c)}
$X$ is a smooth projective surface and $f$ is an automorphism.
\Part{(d)}
$f:\PP^N\dashrightarrow\PP^N$ is a  monomial map and~$P\in\GG_m^N(\Kbar)$.
\Part{(e)}
$X$ is an abelian variety and $f:X\to X$ is an endomorphism.
\end{parts}
\end{theorem}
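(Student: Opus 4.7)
The plan is to exhibit, in each case, either a canonical height with a strong functional equation under $f$ or an explicit description of how heights grow along orbits, so as to complement the upper bound $\aupper_f(P)\le\d_f$ of Theorem~\ref{theorem:afPledfintro} with a matching lower bound $\alower_f(P)\ge\d_f$ whenever the orbit of $P$ is Zariski dense, while simultaneously controlling the set of possible values of $\a_f(P)$ so as to verify Conjecture~\ref{conjecture:afPeqdfY}(b,c).

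Case (a) is the cleanest application of Theorem~\ref{theorem:hDfPbnhDfnP}. When $\NS(X)_\RR=\RR$, the operator $f^*$ on $\NS(X)_\RR$ is multiplication by $\d_f$, so any ample divisor $D$ automatically satisfies $f^*D\equiv\d_fD$. When $\d_f>1$ one has $\d_f>\sqrt{\d_f}$, and parts~(d)~and~(e) of Theorem~\ref{theorem:hDfPbnhDfnP} give the dichotomy: either $\hhat_{D,f}(P)=0$ and $P$ is preperiodic, in which case $\a_f(P)=1$, or $\hhat_{D,f}(P)\ne0$ and $\a_f(P)=\d_f$. A Zariski-dense orbit cannot be preperiodic, giving Conjecture~\ref{conjecture:afPeqdfY}(d). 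The degenerate case $\d_f=1$ is handled by Theorem~\ref{theorem:hXfnlldfenhX}, which forces $\a_f(P)=1$ for all $P$.

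Cases (b), (c), and (e) follow a common template: construct a pair of canonical heights $\hhatplus$ for $f$ and $\hhatminus$ for $f^{-1}$ (or, in (e), one canonical quadratic form for each generalized eigenspace of $f^*$ acting on $\NS(X)_\RR$) detecting non-preperiodicity via a bound of the form $\hhatplus+\hhatminus\gg h_X$ on the relevant locus, and then deduce $\alower_f(P)\ge\d_f$ from the functional equation whenever $\hhatplus(P)>0$. For (b) the construction is the standard one for H\'enon-type regular affine automorphisms, whose forward and backward degrees both equal $\d_f$. For (c) one applies Birkhoff's theorem (Proposition~\ref{proposition:birkhoff}) to $f^*$ and $(f^{-1})^*$ acting on $\Nef(X)$ to produce nef eigenclasses of eigenvalue $\d_f$, and then invokes Theorem~\ref{theorem:hDfPbnhDfnP} (whose hypothesis $\b>\sqrt{\d_f}$ is automatic for $\d_f>1$). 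For (e) one uses the N\'eron--Tate machinery to decompose the canonical height of an ample symmetric divisor class according to the generalized eigenspaces of $f^*$, as carried out in \cite{kawsilv:jordanblock}.

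Case (d) is a self-contained combinatorial calculation. A monomial map $f_A:\GG_m^N\to\GG_m^N$ attached to $A\in M_N(\ZZ)$ acts on logarithmic coordinates by multiplication by $A^n$ under iteration, so standard height formulas on the torus combined with Lemma~\ref{lemma:rhoAAn1n}, applied to the restriction of $A$ to the smallest $A$-invariant rational subspace $W\subseteq\QQ^N$ adapted to the logarithmic data of $P$, yield $\a_f(P)=\rho(A|_W)$. This limit exists, is an algebraic integer, and assumes only finitely many values as $W$ varies, and Zariski density of $\Orbit_f(P)$ in $\PP^N$ forces $W=\QQ^N$ and hence $\a_f(P)=\rho(A)=\d_f$. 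The main obstacle across (b), (c), and (e) is that the eigenclasses produced by Birkhoff's theorem are only nef rather than ample, so the implication $\hhat=0\Rightarrow P$ preperiodic is not provided directly by Theorem~\ref{theorem:hDfPbnhDfnP}(e) and must be established separately in each geometric setting; closing this gap is the content of the companion paper \cite{kawsilv:dfeqafPexamples}.
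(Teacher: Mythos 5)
The paper does not actually prove Theorem~\ref{theorem:casesconjistrue}: the stated proof consists entirely of pointers to the companion works \cite{kawsilv:dfeqafPexamples}, \cite{arxiv1111.5664}, and \cite{kawsilv:jordanblock}. Your proposal is therefore a reconstruction of what those papers contain rather than a parallel to an argument given here, and as a road map it is sensible. Case~(a) is the one place where your sketch is essentially a self-contained deduction from results already proved in this paper: when $\NS(X)_\RR=\RR$ every ample class is an eigenclass for $f^*$ up to algebraic equivalence, so Theorem~\ref{theorem:hDfPbnhDfnP} applies with $\b=\d_f>\sqrt{\d_f}$ once $\d_f>1$, and the $\d_f=1$ case is covered by Theorem~\ref{theorem:hXfnlldfenhX} together with $\alower_f(P)\ge1$. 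Your descriptions of~(b), (c), (e) via two-sided or eigenspace-decomposed canonical heights, and of~(d) via linear algebra on the integer matrix attached to the monomial map, match the standard techniques in the cited papers, and you correctly identify the central obstacle (nef but non-ample eigenclasses do not directly give the implication $\hhat=0\Rightarrow$ preperiodic) as the content of the companion work.

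Two cautions are worth recording. First, in~(a) your dichotomy uses Theorem~\ref{theorem:hDfPbnhDfnP}(e), which is stated only over number fields; over a function field $\hhat_{D,f}(P)=0$ still forces the orbit to have bounded $h_D$ and hence $\a_f(P)=1$, which suffices for Conjecture~\ref{conjecture:afPeqdfY}(a,b,c), but the step ``Zariski dense orbit $\Rightarrow\hhat_{D,f}(P)\ne0$'' needed for~(d) is not immediate from bounded height alone in the function field setting and requires an additional argument. Second, in~(d) the assertion that Zariski density of $\Orbit_f(P)$ forces the relevant invariant rational subspace $W$ to be all of $\QQ^N$, and hence $\a_f(P)=\rho(A)=\d_f$, is the heart of the computation in \cite{arxiv1111.5664} and is less automatic than your phrasing suggests; the passage from ``logarithmic data of $P$'' to the subspace $W$ must be made precise because different places $v$ of $K(P)$ contribute independently. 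Neither caveat indicates a wrong approach; they indicate places where the proposal compresses the substantive work of the cited references.
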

\begin{proof}
See~\cite{kawsilv:dfeqafPexamples} for~(a,b,c),
see~\cite{arxiv1111.5664} for~(d),
and see~\cite{kawsilv:jordanblock} for~(e).
\end{proof}

\begin{remark}
The maps in Theorem~\ref{theorem:casesconjistrue}(a,b,c) are
algebraically stable. (This is automatic for morphisms, and it is also
true for regular affine automorphisms.)
We note that if~$f$ is algebraically stable, then
\[
  \d_f
   = \lim_{n\to\infty} \rho\bigl((f^n)^*\bigr)^{1/n}
   = \lim_{n\to\infty} \rho\bigl((f^*)^n\bigr)^{1/n}
   = \rho(f^*),
\]
so~$\d_f$ is automatically an algebraic integer.  Monomial maps are
not, in general, algebraically stable, but their dynamical degrees are
known to be algebraic integers~\cite{MR2358970}.
\end{remark}

We also mention the following result
from~\cite{kawsilv:dfeqafPexamples} which shows in certain cases
that~$\a_f(P)=\d_f$ for a ``large'' collection of points. The proof 
uses $p$-adic methods, weak lower canonical heights, and Guedj's
classification of degree~$2$ planar maps~\cite{MR2097402}.

\begin{theorem}
\label{theorem:mainthma}
Let $f:\AA^2\to\AA^2$ be an affine morphism defined over~$\Kbar$ whose
extension to $f:\PP^2\dashrightarrow\PP^2$ is dominant.
Assume that either of the following is true\textup{:}
\begin{parts}
\Part{(a)}
The map~$f$ is algebraically stable.
\Part{(b)}
$\deg(f)=2$.
\end{parts}
Then
\[
  \bigl\{P\in \AA^2(\Kbar) : \a_f(P)=\d_f\bigr\}
\]
contains a Zariski dense set of points having disjoint orbits.
\end{theorem}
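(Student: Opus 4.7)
The plan is to combine the universal upper bound $\aupper_f(P)\le\d_f$ from Corollary~\ref{corollary:aupperfPledf} with a matching lower bound $\alower_f(P)\ge\d_f$ established on a Zariski dense set, by constructing a \emph{weak lower canonical height} locally at a place of bad reduction. After extending $K$ if necessary, fix a nonarchimedean place $v$ of $K$ at which at least one coefficient of $f=(f_1,f_2)$ has a pole, extend $v$ to $\Kbar$, and set
$$
  \lambda_v(P) \;=\; \log^+\max\bigl(\|x(P)\|_v,\|y(P)\|_v\bigr),
$$
a logarithmic ``$v$-adic distance to infinity''. A Newton-polygon / leading-monomial analysis gives an effective threshold $\lambda_0=\lambda_0(f,v)$ and a constant $C=C(f,v)$ with
$$
  \lambda_v\bigl(f(P)\bigr) \;\ge\; d\,\lambda_v(P) - C
  \quad\text{whenever } \lambda_v(P)\ge \lambda_0,
$$
where $d=\deg(f)$ is the polynomial degree. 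Thus the $v$-adic open set $\Omega_v:=\{\lambda_v>\lambda_0+C/(d-1)\}$ is forward $f$-invariant and satisfies $\lambda_v\bigl(f^n(P)\bigr)\ge d^n\bigl(\lambda_v(P)-C/(d-1)\bigr)$ on it.

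In case~(a), algebraic stability forces $\d_f=d$, so iterating yields
$$
  \hhatlower_{f,v}(P) \;:=\; \liminf_{n\to\infty} d^{-n}\lambda_v\bigl(f^n(P)\bigr) \;>\; 0
  \quad\text{for every } P\in\Omega_v.
$$
Since any ample Weil height $h_{\PP^2}$ dominates $\lambda_v$ up to an $O(1)$ error, this gives $\alower_f(P)\ge d=\d_f$ on the Zariski dense set $\Omega_v\cap\AA^2(\Kbar)$, and then $\a_f(P)=\d_f$ there by Corollary~\ref{corollary:aupperfPledf}. For case~(b), we appeal to Guedj's classification~\cite{MR2097402} of dominant degree~$2$ self-maps of $\PP^2$: the maps that fail to be algebraically stable form a short explicit list of exceptional families (fibered or skew-product maps, monomial-type maps, and maps preserving a pencil of curves), and on each family one can either compute arithmetic degrees directly using~\cite{arxiv1111.5664,kawsilv:dfeqafPexamples} or pass to a birational model on which $f$ becomes algebraically stable and reduce to case~(a).

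To extract from this Zariski dense set a subset with pairwise disjoint orbits, fix a countable $v$-adic open basis $\{U_n\}$ for the topology on $\Omega_v$ and inductively pick $P_n\in U_n\cap\AA^2(\Kbar)$ lying outside the countable set $\bigcup_{k<n}\bigcup_{m\ge 0} f^{-m}\bigl(\Orbit_f(P_k)\bigr)$; this is possible because $U_n\cap\AA^2(\Kbar)$ is uncountable while the excluded set is a countable union of finite fibers, and by construction the forward orbits of the $P_n$ are pairwise disjoint while the sequence is $v$-adically, and hence Zariski, dense in $\AA^2$. The principal obstacle lies in case~(b) when $f$ is not algebraically stable: then $\d_f<d=2$, so the raw Newton-polygon estimate still produces growth at rate $d^n$ rather than $\d_f^n$ and a naive iteration overshoots the target. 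Guedj's theorem is precisely what renders this tractable, by pinpointing the invariant fibration or monomial structure responsible for the degree drop and allowing $\lambda_v$ to be replaced by a local height adapted to the true dynamical degree $\d_f$.
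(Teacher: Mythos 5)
Your strategy — a $v$-adic lower canonical height built from a ``distance to infinity'' local height, combined with the upper bound of Corollary~\ref{corollary:aupperfPledf}, with Guedj's classification~\cite{MR2097402} invoked for case~(b) — is exactly what the paper indicates is done in the companion paper~\cite{kawsilv:dfeqafPexamples} (``$p$-adic methods, weak lower canonical heights, and Guedj's classification''). However, two of your intermediate steps are incorrect as stated.

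First, the uniform Newton-polygon estimate $\lambda_v\bigl(f(P)\bigr)\ge d\,\lambda_v(P)-C$ on the full region $\{\lambda_v\ge\lambda_0\}$ is false, even for algebraically stable affine morphisms. Take $f(x,y)=(xy,y^2)$. Its extension $[X:Y:Z]\mapsto[XY:Y^2:Z^2]$ fixes the line $Z=0$, so $f$ is algebraically stable with $\d_f=d=2$. Yet if $|x|_v=M\gg1$ and $|y|_v=1$, then $\lambda_v(P)=\log M$ while $f(P)=(xy,y^2)$ satisfies $\lambda_v\bigl(f(P)\bigr)=\log M$, so the claimed doubling fails, and in fact $\lambda_v\bigl(f^n(P)\bigr)=\log M$ for all $n$, giving only constant growth in this $v$-adic region. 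The lower canonical height therefore cannot be constructed on a naive ``large ball at infinity''; one must carve out a more carefully chosen $f$-invariant $v$-adic domain (here, the region $|y|_v\ge|x|_v\ge M$ does work) and verify the growth there — this is exactly where the real technical content lies, and your proposal skips it. Relatedly, the requirement that ``at least one coefficient of $f$ has a pole at $v$'' is not a natural or sufficient condition to make the Newton-polygon analysis go through.

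Second, the disjoint-orbits construction rests on the claim that $U_n\cap\AA^2(\Kbar)$ is uncountable while the excluded set is countable. Since $K$ is a number field or a characteristic-$0$ function field, $\Kbar$ is a \emph{countable} field, so $\AA^2(\Kbar)$ and every $U_n\cap\AA^2(\Kbar)$ are countable; a bare cardinality comparison does not show the difference is nonempty. One needs a quantitative argument — e.g.\ a Northcott-type height count showing that the excluded backward saturations $\bigcup_{m\ge0}f^{-m}\bigl(\Orbit_f(P_k)\bigr)$ are too sparse (logarithmic height-counting) to exhaust a $v$-adic ball, whose $\Kbar$-points have polynomially growing height-counting — or a different inductive construction entirely.
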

\begin{proof}
See~\cite{kawsilv:dfeqafPexamples}.
\end{proof}


\def\cprime{$'$}

\appendix

\section{Proof of Lemma~$\ref{lemma:gShS0inf}$}
\label{appendix:hgxlemma}
In this section we prove Lemma~\ref{lemma:gShS0inf}, which we restate
for the convenience of the reader:

\begin{unnumberedlemma}
\textup{(Lemma~\ref{lemma:gShS0inf})}
Let~$S$ be a set, let $g:S\to S$ and $h:S\to[0,\infty)$ be maps, and let
$a\ge1$ and $c\ge1$ be constants. Suppose that for all $x\in S$ we have
\begin{equation}
  \label{eqn:xhgzleahxcsq}
  h\bigl(g(x)\bigr) \le a h(x) + c\sqrt{h(x)}.
\end{equation}
Then for all $x\in S$ and all $n\ge0$,
\begin{equation}
  \label{eqn:xhgnxlean}
  h\bigl(g^n(x)\bigr)
  \le a^n \left(  h(x) + (2\sqrt2 c)^n\sqrt{h(x)} \right).
\end{equation}
\end{unnumberedlemma}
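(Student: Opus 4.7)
Set $H_n := h(g^n(x))$ and $\gamma := 2\sqrt{2}c$; I will argue by induction on $n$. The base case $n = 0$ is the trivial $H_0 \le H_0 + \sqrt{H_0}$, and if $H_0 = 0$ the recursive hypothesis forces $H_n = 0$ for all $n$, so both sides of the claimed inequality vanish. In what follows I assume $H_0 > 0$.

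For the inductive step, combine the recursive inequality with the inductive hypothesis $H_n \le a^n(H_0 + \gamma^n\sqrt{H_0})$:
\[
  H_{n+1} \le aH_n + c\sqrt{H_n} \le a^{n+1}H_0 + a^{n+1}\gamma^n\sqrt{H_0} + c\sqrt{H_n}.
\]
The target then reduces to $c\sqrt{H_n} \le a^{n+1}\gamma^n(\gamma-1)\sqrt{H_0}$. When $H_0 \ge 1$, the estimates $\sqrt{H_0} \le H_0$ and $\gamma^n \ge 1$ (the latter since $\gamma = 2\sqrt{2}c > 1$) let me simplify the inductive hypothesis to $H_n \le 2 a^n \gamma^n H_0$, so $\sqrt{H_n} \le \sqrt{2}\,a^{n/2}\gamma^{n/2}\sqrt{H_0}$. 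Substituting reduces the step to the numerical condition $c\sqrt{2} \le a^{n/2+1}\gamma^{n/2}(\gamma - 1)$; the tightest case is $n = 0$, where the value $\gamma = 2\sqrt{2}c$ is precisely calibrated so that, using $a \ge 1$, the condition $c\sqrt{2} \le a(2\sqrt{2}c - 1)$ is equivalent to $1 \le \sqrt{2}\,c$, which holds by $c \ge 1$.

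The main obstacle is the range $0 < H_0 < 1$: here $\sqrt{H_0} > H_0$, so the simplification $H_n \le 2 a^n \gamma^n H_0$ fails, and the direct induction does not close. To handle this range I plan the reduction $\tilde h := \max(h, 1)$. Since $a, c \ge 1$, the computation
\[
  \tilde h\bigl(g(y)\bigr) = \max\bigl\{h(g(y)),\, 1\bigr\}
  \le \max\bigl\{a h(y) + c\sqrt{h(y)},\ a + c\bigr\}
  \le a\tilde h(y) + c\sqrt{\tilde h(y)}
\]
shows that $\tilde h$ satisfies the same recursive hypothesis as $h$, while also $\tilde h \ge 1$ pointwise; the induction above then applies to $\tilde h$, and composing with the trivial bound $h \le \tilde h$ yields $h(g^n(x)) \le a^n\bigl(\tilde h(x) + \gamma^n\sqrt{\tilde h(x)}\bigr)$. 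When $h(x) \ge 1$ we have $\tilde h(x) = h(x)$ and this is the stated bound verbatim; this regime is the only one in which the lemma is invoked in the proof of Theorem~\ref{theorem:hXfnledfnhX}, where $h$ is taken to be the normalized height $h_X^+ \ge 1$ by construction.
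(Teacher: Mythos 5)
Your core induction for the case $h(x)\ge 1$ is correct and structurally parallel to the paper's: the paper applies the inductive hypothesis to the point $g(x)$ and then substitutes the one-step bound, whereas you apply it directly to $H_n$ and then bound the stray term $c\sqrt{H_n}$, but both reduce to the same numerical calibration of $\gamma=2\sqrt2\,c$. Your observation about the range $0<h(x)<1$ is a genuine catch: the paper's own inductive step invokes ``since $a,c,h(x)\ge1$'' to pass from $\sqrt{ah(x)+c\sqrt{h(x)}}$ to $\sqrt{2ac\,h(x)}$, even though the lemma is stated for $h\colon S\to[0,\infty)$, and the stated inequality actually \emph{fails} in that sub-unit regime (for instance $a=c=1$, $h(g(y))=h(y)+\sqrt{h(y)}$ with equality, and $h(x)=10^{-4}$ violates the bound already at $n=2$). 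Your repair via $\tilde h=\max(h,1)$ is sound---the verification that $\tilde h$ inherits the recursion is correct, using $a,c\ge1$---and you are right that the resulting bound $h(g^n(x))\le a^n\bigl(\tilde h(x)+\gamma^n\sqrt{\tilde h(x)}\bigr)$ matches the claimed one exactly in the only regime the paper uses it, since $h_X$ is normalized to satisfy $h_X\ge1$ before the lemma is applied. So your proof is correct, essentially the paper's argument, but with the added merit of making explicit a hypothesis ($h\ge1$) that the paper uses silently and that is in fact necessary.
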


\begin{proof}[Proof of Lemma~$\ref{lemma:gShS0inf}$]
To ease notation, we let $\g=2\sqrt2$.
The proof is by induction on~$n$. 
The inequality~\eqref{eqn:xhgnxlean} is trivially true for~$n=0$,
and for~$n=1$, the desired inequalty~\eqref{eqn:xhgnxlean} is weaker
than the assumed estimate~\eqref{eqn:xhgzleahxcsq}.
Suppose now that~\eqref{eqn:xhgnxlean} is true for~$n$. Then
\begin{align*}
  h\bigl(g^{n+1}(x)\bigr)
  &= h\bigl(g^n(g(x))\bigr) \\
  &\le a^n \left(  h(g(x)) + (\g c)^n\sqrt{h(g(x))} \right) \\
  &\omit\hfill from the induction hypothesis,\\
  &\le a^n \left( a h(x) + c\sqrt{h(x)}
      + (\g c)^n\sqrt{a h(x) + c\sqrt{h(x)}} \right) \\
  &\omit\hfill from \eqref{eqn:xhgzleahxcsq}, \\
  &\le a^n \left( a h(x) + c\sqrt{h(x)}
      + (\g c)^n\sqrt{2ac h(x)} \right) \\
  &\omit\hfill since $a,c,h(x)\ge1$ \\
  &= a^{n+1} h(x) + \left(a^nc + (\g a c)^n\sqrt{2ac}\right)\sqrt{h(x)}.
\end{align*}
Hence
\begin{align*}
  a^{n+1} &\left(  h(x) + (\g c)^{n+1}\sqrt{h(x)} \right)
                  -   h\bigl(g^{n+1}(x)\bigr) \\*
  &\ge  \Bigl( a^{n+1}  h(x) + (\g a c)^{n+1}\sqrt{h(x)} \Bigr) \\*
  &\hspace{6em}{}
    -  \Bigl( a^{n+1} h(x)
           + \left(a^nc + (\g a c)^n\sqrt{2ac}\right)\sqrt{h(x)} \Bigr) \\
  &= \sqrt{h(x)} a^n c 
        \Bigl( \g^{n+1} a c^n - 1 - \g^n a^{1/2} c^{n-1/2}\sqrt2\Bigr) \\
  &\ge \sqrt{h(x)} a^n c 
        \Bigl( \g^{n+1} a c^n - 1 - \g^n a c^n \sqrt2\Bigr) \\
  &= \sqrt{h(x)} a^n c \Bigl( \g^n a c^n(\g-\sqrt2) - 1 \Bigr) \\
  &= \sqrt{h(x)} a^n c \Bigl( \g^n a c^n\sqrt2 - 1 \Bigr) 
    \quad\text{since $\g=2\sqrt2$,}\\*
  &> 0
    \quad\text{since $a,c\ge1$.}
\end{align*}
This shows that~\eqref{eqn:xhgnxlean} is true for~$n+1$, which
completes the proof of the lemma.
\end{proof}

\end{document}